\newtheorem{theorem}{Theorem}[section]
\newtheorem{lemma}[theorem]{Lemma}
\newtheorem{proposition}[theorem]{Proposition}
\newtheorem{conjecture}[theorem]{Conjecture}
\theoremstyle{definition}
\newtheorem{definition}[theorem]{Definition}
\theoremstyle{remark}
\newtheorem{remark}[theorem]{Remark}
\numberwithin{equation}{section}
\definecolor{blue1}{rgb}{0.10,0.60,0.8}
\definecolor{ss}{rgb}{0.16,0.01,0.40}
\definecolor{s}{rgb}{0.00,0.00,0.00}
\definecolor{p}{rgb}{0.00,0.00,0.79}
\definecolor{r}{rgb}{0.27,0.00,0.53}
\definecolor{t}{rgb}{0.22,0.00,0.44}
\definecolor{a}{rgb}{0.10,0.70,0.80}
\definecolor{an}{rgb}{0.60,0.00,1.00}
\def\os{<_{\cO}}
\newcommand{\D}[2]{\Delta_{#1}(#2)}
\newcommand{\di}[2]{#1\setminus\{#2\}}
\newcommand{\cD}{\mathcal{D}}
\newcommand{\cO}{\mathcal{O}}
\newcommand{\cS}{\mathcal{S}}
\newcommand{\T}{\mathcal{T}}
\newcommand{\Z}{\mathbb{Z}}
\newcommand{\al}{\alpha}
\newcommand{\bK}{\mathbb{K}}
\newcommand{\Cl}{\mathsf{Cl}}
\begin{document}

\title{Shellability of $3$-Cut Complexes of Squared Cycle Graphs}

\author{Pratiksha Chauhan}
 \address{School of Mathematical and Statistical Sciences, IIT Mandi, India}

 \email{d22037@students.iitmandi.ac.in}

 \author{Samir Shukla}
\address{School of Mathematical and Statistical  Sciences, IIT Mandi, India}
 \email{samir@iitmandi.ac.in}

 \author{Kumar Vinayak}
\address{Department of Mathematics, University of Kentucky, USA}

 \email{kumar.vinayak@uky.edu}

\subjclass[2020]{57M15, 52B22, 55U05, 05C69, 05E45}

 \keywords{Cut complex, Shellability, Squared cycle graphs, Homotopy}

\begin{abstract}

For a positive integer $k$, the $k$-cut complex of a graph $G$ is the simplicial complex whose facets are the $(|V(G)|-k)$-subsets $\sigma$ of the vertex set $V(G)$ of $G$ such that the induced subgraph of $G$ on $V(G) \setminus \sigma$ is disconnected.
These complexes first appeared in the master thesis of Denker and were further studied by Bayer et al.\ in [Topology of cut complexes of graphs, SIAM Journal on Discrete Mathematics, 38(2):1630-1675, 2024].   
In the same article, Bayer et al.\ conjectured that for $k \geq 3$, the $k$-cut complexes of squared cycle graphs are shellable. Moreover, they also conjectured about the Betti numbers of these complexes when $k=3$. In this article, we prove these conjectures for $k=3$.

\end{abstract}

\maketitle

\section{Introduction}
 All the graphs in this article are assumed to be finite and simple, that is, without loops and multiple edges. We denote the set of vertices and the set of edges of a graph $G$  by $V(G)$ and $E(G)$, respectively.
For an integer $k \geq 1$, the {\it $k$-cut complex} of a graph $G$, denoted as $\Delta_k(G)$, is the simplicial complex whose facets (maximal simplices) are $\sigma \subseteq  V(G)$  such that $|\sigma| = |V(G)|-k$ and the induced subgraph $G[V(G) \setminus \sigma]$ is disconnected.  These complexes first appeared in \cite{Denker2018} and were further studied by Bayer et al.\ in  \cite{Bayer2024Cutcomplex}. One of the main motivations behind cut complexes was a famous theorem 
of Ralf Fr{\"o}berg \cite{Froberg1990} connecting commutative algebra and graph theory through topology (see \Cref{thm:Fr}).

Let $\Delta$ be a simplicial complex on the vertex set $V(\Delta)= \{v_1, v_2, \ldots, v_n \}$ and let $\bK$ be a field. The Stanley-Reisner ideal  $I_{\Delta}$ of $\Delta$ is the ideal of the polynomial ring $\bK[x_1, \ldots, x_n]$ generated by the monomials corresponding to  minimal  subsets of $V(\Delta)$, which are not simplices  of $\Delta$, \textit{i.e.}, $I_{\Delta} = \langle x_{i_1}\ldots x_{i_k} : \{v_{i_1},v_{i_2}, \ldots, v_{i_k}\} \notin \Delta  \rangle$. The Stanley-Reisner ring  $\bK[\Delta]$ is the quotient ring  $\bK[x_1, \ldots, x_n]/I_{\Delta}$. For more details, we refer the reader to  \cite{Eagon1998}.

The Alexander dual $\Delta^{\vee}$ of the simplicial complex $\Delta$ is the simplicial complex on the vertex set $V(\Delta)$, whose simplices are the subsets of $V(\Delta)$ such that their complements are not simplices of $\Delta$, \textit{i.e.}, 
$$
\Delta^{\vee} = \{\sigma \subset V(\Delta): V(\Delta) \setminus \sigma \notin \Delta\}.
$$

The clique complex  $\mathsf{Cl}(G)$ of a graph $G$ is the simplicial complex whose simplices are $\sigma \subseteq V(G)$ such that the induced subgraph $G[\sigma]$ is a complete graph. It is easy to check that the  Stanley-Reisner ideal  $I_{\Cl(G)}$ of $\Cl(G)$,  is generated by quadratic square-free monomials.

\begin{theorem} [{\cite[Theorem 1]{Froberg1990}, \cite[p.\ 274]{Eagon1998}}] \label{thm:Fr}
	
	A Stanley–Reisner ideal $I_{\Delta}$ generated by quadratic square-free monomials has a $2$-linear resolution if and only if $\Delta$ is the clique complex $\Cl(G)$ of a chordal graph $G$.
	\end{theorem}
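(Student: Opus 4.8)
The plan is to translate the algebraic condition ``$I_{\Delta}$ has a $2$-linear resolution'' into a purely topological condition on the induced subcomplexes of $\Delta$ by means of Hochster's formula, and then to recognize that condition as a homological characterization of chordality. First I would note that the hypothesis that $I_{\Delta}$ is generated by quadratic square-free monomials forces every minimal non-face of $\Delta$ to have exactly two elements, so that $\Delta$ coincides with the clique complex $\Cl(G)$ of its own $1$-skeleton $G$. This already settles one half of the ``if and only if'' at the level of the form of the ideal, and reduces the theorem to the statement that $\Cl(G)$ has a $2$-linear Stanley--Reisner resolution if and only if $G$ is chordal.

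The next step is to apply Hochster's formula
\[
\beta_{i,j}(\bK[\Delta]) \;=\; \sum_{\substack{W \subseteq V(\Delta) \\ |W| = j}} \dim_{\bK} \tilde{H}_{j-i-1}(\Delta[W]; \bK),
\]
which shows that $I_{\Delta}$ has a $2$-linear resolution precisely when $\beta_{i,j}(\bK[\Delta])=0$ for all $i\geq 1$ and $j\neq i+1$, equivalently when $\tilde{H}_{\ell}(\Delta[W]; \bK)=0$ for every $W \subseteq V(\Delta)$ and every $\ell \geq 1$. Since the induced subcomplex $\Cl(G)[W]$ is exactly $\Cl(G[W])$ and an induced subgraph of a chordal graph is chordal, the theorem now follows once I establish two implications: (i) if $G$ is chordal, then $\tilde{H}_{\ell}(\Cl(G); \bK)=0$ for all $\ell \geq 1$; and (ii) if $G$ is not chordal, then some induced subcomplex of $\Cl(G)$ carries nonzero reduced homology in a positive degree.

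For (i) I would use Dirac's theorem that a chordal graph has a simplicial vertex $v$, that is, one whose neighborhood $N(v)$ is a clique. Then $\Cl(G) = \Cl(G-v) \cup \mathrm{star}(v)$, where $\mathrm{star}(v)$ is the closed star of $v$ (a cone, hence contractible) and $\Cl(G-v)\cap \mathrm{star}(v) = \Cl(G[N(v)])$ is a full simplex, hence contractible; the Mayer--Vietoris sequence then yields $\tilde{H}_{\ell}(\Cl(G);\bK)\cong \tilde{H}_{\ell}(\Cl(G-v);\bK)$ for $\ell\geq 1$, and induction on $|V(G)|$ finishes this part (the case of an isolated vertex being handled directly). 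For (ii) I would argue by contraposition directly from the definition of chordality: a non-chordal $G$ contains an induced cycle $C_n$ with $n\geq 4$, and because $n\geq 4$ this cycle spans no triangle, so $\Cl(G)[V(C_n)] = \Cl(C_n)$ is a triangulation of $S^1$, giving $\tilde{H}_1 \cong \bK \neq 0$.

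I expect the main point requiring care to be the Hochster-formula step: pinning down exactly which graded Betti numbers must vanish for a genuinely $2$-linear resolution, and checking that the degree constraint (any $W$ with $\tilde{H}_{\ell}(\Delta[W])\neq 0$ satisfies $|W|\geq \ell+2$) makes the passage to ``reduced homology concentrated in degree $0$'' an equivalence rather than a one-sided implication. The remaining ingredients---Dirac's simplicial-vertex theorem, the Mayer--Vietoris sequence, and the identification of $\Cl(G)[W]$ with $\Cl(G[W])$---are standard or immediate, and the argument is visibly field-independent.
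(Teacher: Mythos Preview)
The paper does not give its own proof of this statement: Theorem~\ref{thm:Fr} is quoted from \cite{Froberg1990} and \cite{Eagon1998} purely as background motivation, with no argument supplied. Consequently there is nothing in the paper to compare your proposal against.

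That said, your outline is the standard route to Fr\"oberg's theorem and is correct in all essential respects. The reduction ``quadratic generation $\Rightarrow$ $\Delta=\Cl(G)$ for $G$ the $1$-skeleton'' is immediate; the Hochster translation of $2$-linearity into ``$\tilde H_\ell(\Delta[W];\bK)=0$ for all $W$ and all $\ell\ge 1$'' is handled correctly, including the check that the constraint $|W|\ge \ell+2$ is automatic so the equivalence is two-sided; Dirac's simplicial-vertex theorem plus Mayer--Vietoris gives the acyclicity of $\Cl(G)$ for chordal $G$ by induction; and an induced $C_n$ with $n\ge4$ produces $\tilde H_1\ne 0$ in the non-chordal case. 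One small point worth making explicit when you write it out: in the Mayer--Vietoris step you should also note that $\Cl(G-v)$ is the clique complex of a chordal graph (induced subgraphs of chordal graphs are chordal), so the inductive hypothesis applies, and that the base case (a single vertex or the empty graph) is trivial.
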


\begin{theorem}[{\cite[Proposition 8]{Eagon1998}}] \label{theorem:equivalence} 
	The following are equivalent for a graph $G$. 
	\begin{enumerate}
		\item  $G$ is chordal.
		
		\item  $\Cl(G)^{\vee}$  is  Cohen-Macaulay over a field $k$.
		
		\item  $\Cl(G)^{\vee}$  is  vertex decomposable.
	
		\end{enumerate} 
	\end{theorem}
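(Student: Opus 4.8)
The plan is to prove the cycle of implications $(1)\Rightarrow(3)\Rightarrow(2)\Rightarrow(1)$, combining \Cref{thm:Fr} with the Eagon--Reiner criterion and the standard chain \emph{vertex decomposable} $\Rightarrow$ \emph{shellable} $\Rightarrow$ \emph{Cohen--Macaulay} for pure simplicial complexes. The preliminary observation I would record is that a minimal non-face of $\Cl(G)$ must be a non-edge of $G$ (a non-complete induced subgraph on three or more vertices already contains a non-edge), so the facets of $\Cl(G)^{\vee}$ are exactly the sets $V(G)\setminus\{u,w\}$ with $\{u,w\}\notin E(G)$. Consequently $\Cl(G)^{\vee}$ is pure of dimension $|V(G)|-3$ (the degenerate cases being $\overline{G}$ with at most one edge, where $\Cl(G)^{\vee}$ is a simplex or is void), and $I_{\Cl(G)}$ is generated in degree $2$, so for this ideal a linear resolution is the same as a $2$-linear resolution.

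For $(2)\Rightarrow(1)$ I would apply the Eagon--Reiner theorem to the complex $\Cl(G)^{\vee}$: since $(\Cl(G)^{\vee})^{\vee}=\Cl(G)$, the ring $\bK[\Cl(G)^{\vee}]$ is Cohen--Macaulay if and only if $I_{\Cl(G)}$ has a linear, hence $2$-linear, resolution, and by \Cref{thm:Fr} this holds precisely when $G$ is chordal. For $(3)\Rightarrow(2)$, a pure vertex-decomposable complex is shellable (Provan--Billera) and a pure shellable complex has Cohen--Macaulay Stanley--Reisner ring over every field (Reisner's criterion); since $\Cl(G)^{\vee}$ is pure, vertex decomposability yields Cohen--Macaulayness over $\bK$.

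The substantive step is $(1)\Rightarrow(3)$, which I would prove by induction on $|V(G)|$. If $\overline{G}$ has at most one edge, $\Cl(G)^{\vee}$ is a simplex (or void) and there is nothing to prove. Otherwise, using that a non-complete chordal graph has two non-adjacent simplicial vertices, choose a simplicial vertex $v$ of $G$ with $G\setminus v$ not complete --- equivalently, $v\in V(\Cl(G)^{\vee})$. I claim $v$ is a shedding vertex of $\Delta:=\Cl(G)^{\vee}$. First, $\operatorname{lk}_{\Delta}(v)=\Cl(G\setminus v)^{\vee}$, because the non-edges of $G$ not containing $v$ are exactly the non-edges of $G\setminus v$; since $G\setminus v$ is chordal, the inductive hypothesis makes this link vertex decomposable. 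Second, because $N_G(v)$ is a clique, every non-edge of $G$ has an endpoint outside $N_G(v)$, and from this one checks that $\Delta\setminus v$ is pure with facets exactly the sets $(V(G)\setminus v)\setminus\{u\}$ as $u$ runs over the non-neighbours of $v$ in $G$; this is a partial boundary complex of a simplex, which is vertex decomposable (iteratively a cone over a smaller such complex, the base cases being a simplex or the boundary sphere of a simplex). Finally, each such facet equals $V(G)\setminus\{v,u\}$ with $\{v,u\}$ a non-edge of $G$, hence is again a facet of $\Delta$; for a pure complex this is exactly the shedding condition that no facet of $\operatorname{lk}_{\Delta}(v)$ is a facet of $\Delta\setminus v$.

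I expect the only real difficulty to be $(1)\Rightarrow(3)$: one has to choose $v$ so that the induction applies and then check all three requirements of vertex decomposability for $\Delta\setminus v$ and $\operatorname{lk}_{\Delta}(v)$ simultaneously. The point that unlocks it is that the simplicial vertices of $G$ are precisely the vertices whose deletion keeps $\Cl(G)^{\vee}$ pure of dimension $|V(G)|-3$ --- a non-simplicial vertex produces a strictly lower-dimensional facet of $\Delta\setminus v$ and cannot be a shedding vertex. The remaining implications are formal consequences of \Cref{thm:Fr}, the Eagon--Reiner theorem, and the Provan--Billera / Reisner chain.
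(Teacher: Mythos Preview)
The paper does not prove this theorem; it is quoted verbatim from \cite[Proposition~8]{Eagon1998} as background, so there is no in-paper argument to compare against. Your reconstruction is correct and is essentially the standard proof: the equivalence $(1)\Leftrightarrow(2)$ is exactly Fr\"oberg's theorem read through the Eagon--Reiner duality $I_{\Delta}$ has a linear resolution $\Leftrightarrow$ $\bK[\Delta^{\vee}]$ is Cohen--Macaulay, and $(3)\Rightarrow(2)$ is the Provan--Billera/Reisner chain. Your inductive proof of $(1)\Rightarrow(3)$ via a simplicial vertex is the expected one; the identification $\operatorname{lk}_{\Delta}(v)=\Cl(G\setminus v)^{\vee}$ and the description of $\operatorname{dl}_{\Delta}(v)$ as a pure subcomplex of the simplex boundary (using that $N_G(v)$ is a clique, so every non-edge meets $V(G)\setminus N_G[v]$) are both right, and your check of the shedding condition is exactly what is needed for the paper's recursive definition of vertex decomposability to go through.

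One small point worth making explicit in a write-up: when you say ``choose a simplicial vertex $v$ with $G\setminus v$ not complete'', you should note why such a $v$ exists once $\overline{G}$ has at least two edges. Dirac's theorem gives two non-adjacent simplicial vertices $v_1,v_2$; if every non-edge of $G$ contains $v_1$ then (since there is a second non-edge besides $\{v_1,v_2\}$) $G\setminus v_2$ still has a non-edge, so at least one of $v_1,v_2$ works. You gesture at this with ``equivalently, $v\in V(\Cl(G)^{\vee})$'', but the existence step deserves one sentence.
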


For a pure simplicial complex, it is well known (see  \cite[Section 11]{BjornerTopologicalMethods}) that 
	$$
	\text{vertex decomposable}  \implies \text{shellable} \implies   \text{Cohen-Macaulay}.
	$$

Observe that  $\Cl(G)^{\vee} = \Delta_2(G)$ for any graph $G$.  Therefore,  \Cref{theorem:equivalence} implies the following.
\begin{align}\label{equation:equivalence}
G  \ \text{is chordal} \Longleftrightarrow  \Delta_2(G) \  \text{is shellable} \Longleftrightarrow  \Delta_2(G) \  \text{is vertex decomposable}.
\end{align}

For definitions of vertex decomposable, Cohen-Macaulay, and shellable complexes, see \Cref{section:prel}.  For more details on these concepts,  we refer to \cite[Section 11]{BjornerTopologicalMethods} and    \cite[Section 3.6]{JonssonBook}.

Inspired from  Fr{\"o}berg's theorem (\Cref{thm:Fr})  and \eqref{equation:equivalence}, Denker  \cite{Denker2018} introduced $k$-cut complexes of graphs which are a generalization of  $\Delta_2(G) = \Cl(G)^{\vee}$.  In  \cite{Bayer2024TotalCutcomplex}, Bayer et al.\ introduced another generalization of $\Delta_2(G)$, the total $k$-cut complexes.  For $k \geq 1$, the {\it total $k$-cut complex} of a graph $G$, denoted as $\Delta_k^t(G)$, is the simplicial complex whose facets are $\sigma \subseteq V(G)$ such that  $|\sigma| = |V(G)|-k$ and the induced subgraph $G[V(G) \setminus \sigma]$ does not contain any edge.  In particular, $\Delta_2(G) = \Delta_2^t(G)$ for any graph $G$.  

 For a graph $G$ and a positive integer $p$, the $p$-th power graph of $G$ is a graph $G^p$ where the set of vertices $V(G^p)=V(G)$ and any $\{u, v\}$ is an edge in $G^p$ if and only if there exists a path between $u$ and $v$ of length at most $p$  in $G$ (here, the length of a path is the number of edges in the path). Clearly $G^1 = G$. 
For $n \geq 3$, the cycle graph $C_n$ on $n$ vertices is a graph where the set of vertices $V(C_n) = \{0, 1, 2, \ldots, n-1\}$ and the set of edges $E(C_n) = \{\{i, i+1 \ (\text{mod} \ n)\} : 0 \leq i \leq n-1 \}$.
Observe that the $p$-th powered cycle graph $C_n^p$ on $n$ vertices is a graph with $V(C_n^p) = V(C_n)$ and the set of edges $E(C_n^p) =\{ \{i,i+j\ (\text{mod} \ n)\}  : 0\leq i\leq n-1 \text{ and } 1\leq j\leq p$\}. For $n \geq 2p+1$,  powered cycle graphs  $C_n^p$ are also $2p$-regular circulant graphs and hence Cayley graphs of the cyclic group on $n$ elements (for the definition of circulant graphs see \Cref{section:prel}).

It can be easily observed that for a graph $G$, $\Delta_1(G)$ is void. In \cite{Bayer2024TotalCutcomplex}, authors studied the total $2$-cut complexes of the cycle graphs $C_n$ \cite[Theorem 4.15]{Bayer2024TotalCutcomplex} and the squared cycle graphs $C_n^2$ \cite[Proposition 4.19]{Bayer2024TotalCutcomplex}. They proved that these complexes (if nonvoid) are homotopy equivalent to wedges of spheres of dimension lower than the dimension of $\Delta_2^t(C_n^p) = \Delta_2(C_n^p)$ for $p=1,2$. Therefore, the complexes  $\Delta_2(C_n)$ and $\Delta_2(C_n^2)$ are not shellable.
 In \cite[Proposition 7.11]{Bayer2024Cutcomplex}, authors proved that for $k\geq 3$, $\Delta_k(C_n)$ is shellable for $n\geq k+1$.
 Further,  based on their observations supported by Sage calculations (for $n \leq 13$ and $k \leq 5$),  they conjectured that for $k \geq 3$, $\Delta_k(C_n^2)$ are shellable for $n \geq k+6$.
  
  \begin{conjecture}[{\cite[Conjecture 7.25]{Bayer2024Cutcomplex}}]\label{conjecture}
  	For $k\ge 3$, the cut complex $\Delta_k(C_n^2)$ is shellable for $n\ge k+6$. For $k=3$ and $n\ge 9,$ the Betti numbers are 
  	$\binom{n-4}{2}-9 = \{1, 6, 12, 19, 27, \dots\}$.
  
  	\end{conjecture}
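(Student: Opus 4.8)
The plan is to prove the $k=3$ case of \Cref{conjecture} by first establishing shellability of $\Delta_3(W_n)$ for $n \ge 9$ via an explicit shelling order, and then extracting the homotopy type (a wedge of $(n-4)$-spheres, the top nonzero dimension) from which the Betti number $\binom{n-4}{2}-9$ follows by counting the homology facets. First I would record the combinatorial structure of the facets of $\Delta_3(W_n)$: a facet is a complement of a $3$-subset $\{a,b,c\}$ of $V(W_n) = \mathbb{Z}_n$ such that $W_n[\{a,b,c\}]$ — equivalently $W_n$ restricted to the removed triple — leaves $W_n[V \setminus \{a,b,c\}]$ disconnected. Because $W_n$ is the squared cycle, removing three vertices disconnects it exactly when those three vertices "cut" the cyclic structure into two arcs, which happens precisely when the removed triple contains a block of two consecutive vertices $\{i,i+1\}$ together with a third vertex that separates the remaining two arcs, or related degenerate configurations for small gaps. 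So the first real step is a clean case analysis classifying all disconnecting triples $\{a,b,c\}$ in $\mathbb{Z}_n$, organized by the cyclic gaps between $a$, $b$, $c$.

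Next I would build the shelling. The natural strategy is to use a lexicographic-type order on the facets, indexed by the removed triples $(a,b,c)$ written in a canonical form (say, choose the representative so that one fixed "cut" appears in a normalized position, then order by the remaining data), and verify the shelling condition: for each facet $F_j$ in the order, the intersection $F_j \cap (\bigcup_{i<j} F_i)$ is a pure subcomplex of $\partial F_j$ of codimension $1$, equivalently the set of minimal new faces of $F_j$ has a unique minimal element (the "restriction face"). Here I expect to lean on the technique of \emph{shelling via a recursive / gluing decomposition}: since $W_n$ is a circulant graph, there should be a way to peel off the facets whose removed triple uses the "last" block and realize the rest as (a cone over, or a shift of) $\Delta_3(W_{n-1})$ plus some extra facets, giving an inductive construction of the shelling order with base cases $n = 9, 10, 11$ checked directly (consistent with the Sage range in the conjecture). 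Tracking exactly which facets are "homology facets" — those whose restriction face is the whole boundary $\partial F_j$, contributing a top-dimensional sphere — is the bookkeeping that yields the Betti number.

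The main obstacle, I expect, is the shelling verification itself: showing that the chosen order actually satisfies the shelling axiom uniformly for all $n \ge 9$, rather than just in small checked cases. The difficulty is that the intersection pattern of a new facet with the earlier ones depends delicately on the cyclic positions of the removed triple, and several boundary configurations (triples with two small gaps, triples "wrapping around", triples adjacent to previously-added blocks) behave differently; one must confirm in each configuration that the restriction face is a single vertex-deletion away from $\partial F_j$ and has a unique minimum. A secondary obstacle is matching the homology-facet count exactly to $\binom{n-4}{2}-9$: one must show the number of facets whose full boundary is "old" is precisely $\binom{n-4}{2}-9$, and that these are all of the same (top) dimension $n-4$, so that $\widetilde{H}_{n-4}(\Delta_3(W_n)) \cong \mathbb{Z}^{\binom{n-4}{2}-9}$ and all lower reduced homology vanishes; the "$-9$" correction term strongly suggests a small fixed family of exceptional facets (independent of $n$) that fail to be homology facets, which must be identified explicitly. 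Once the shelling is in hand, the Betti number computation is the standard fact that for a shellable complex the number of homology facets in each dimension equals the corresponding Betti number, so the conjecture follows.
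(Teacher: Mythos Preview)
Your proposal has a fundamental misreading of the definition of $\Delta_3(W_n)$. A facet is an $(n-3)$-set $\sigma$ such that the induced subgraph on the \emph{complement} $\sigma^c = \{a,b,c\}$ is disconnected; that is, the condition is on the $3$-vertex induced subgraph $W_n[\{a,b,c\}]$, not on the $(n-3)$-vertex graph $W_n[V \setminus \{a,b,c\}]$. Your description (``removing three vertices disconnects it exactly when those three vertices `cut' the cyclic structure into two arcs \ldots\ when the removed triple contains a block of two consecutive vertices $\{i,i+1\}$ together with a third vertex that separates the remaining two arcs'') is the characterization of vertex cuts of $W_n$, which is the wrong object. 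The correct facets are complements of triples $\{a,b,c\}$ for which the $3$-vertex graph $W_n[\{a,b,c\}]$ is disconnected, i.e.\ at least one of the three vertices is at cyclic distance $\geq 3$ from both of the others. This set of triples is much larger than the set of vertex cuts, and its combinatorics are entirely different; your subsequent plan (classify by ``cuts'', peel off blocks, reduce to $W_{n-1}$) is built on the wrong facet set and would not produce a shelling of $\Delta_3(W_n)$.

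Even setting that aside, the paper's proof does not proceed inductively on $n$. It constructs, for each fixed $n \geq 9$, an explicit total order $\prec$ on the facets: first a carefully chosen alternating order $\mathcal{O} = (m, m-1, m+1, m-2, \ldots)$ on the vertex set is used to partition the facets into blocks $\mathcal{T}_s$ according to the $\mathcal{O}$-smallest element of the complement, and within each block a lexicographic rule is applied; then a small exceptional family $\mathcal{D}$ of facets is displaced to obtain the final order $\prec$. The shelling verification is a lengthy but direct case analysis (Lemmas~\ref{case:A}, \ref{case:B}, \ref{case:D}) showing that for $F_i \prec F_j$ one can always find $F_r \prec F_j$ with $F_r \cap F_j = F_j \setminus \{\lambda\}$ for some $\lambda \in F_j \setminus F_i$. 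The spanning (``homology'') facets are then characterized as an explicit family $\mathcal{S}$ and counted to obtain $\binom{n-4}{2} - 9$. Neither the vertex-cut picture nor an induction on $n$ appears; your intuition that the ``$-9$'' reflects a fixed exceptional family is also not how the count works---the $9$ emerges from the closed-form summation over the three families $(\mathcal{S}_1)$--$(\mathcal{S}_3)$, not from nine missing facets.
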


In this article, we prove \Cref{conjecture} for $k=3$. More precisely, the main result of this article is the following.

\begin{theorem}\label{theorem:main}
	Let $n \geq 9$. Then the $3$-cut complex $\D{3}{C_n^2}$ of a squared cycle graph is shellable. Moreover,
 $$
 \Delta_3(C_n^2) \simeq \bigvee\limits_{\binom{n-4}{2}-9} \mathbb{S}^{n-4}.
 $$
	\end{theorem}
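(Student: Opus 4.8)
The plan is to exhibit an explicit shelling order on the facets of $\Delta_3(W_n)$ and, as a byproduct of the shelling, read off the homotopy type. Recall that a facet of $\Delta_3(W_n)$ is a subset $\sigma \subseteq V(W_n) = \{0,1,\dots,n-1\}$ with $|\sigma| = n-3$ such that $W_n[V(W_n)\setminus\sigma]$ is disconnected; equivalently, writing $\tau = V(W_n)\setminus\sigma$ for the complementary $3$-set, the facets correspond to $3$-subsets $\tau$ of $\mathbb{Z}/n$ for which $W_n[\tau]$ is disconnected. Since $W_n$ is the squared cycle, $W_n[\tau]$ is disconnected precisely when $\tau$ cannot be ``connected'' using edges $\{i,i+1\}$ and $\{i,i+2\}$; after sorting the three vertices cyclically as $a < b < c$ with cyclic gaps $g_1 = b-a$, $g_2 = c-b$, $g_3 = n-c+a$ summing to $n$, disconnectedness amounts to: at least one gap is $\geq 3$ and the configuration is not a single ``path'' of short steps — more carefully, $\tau$ is a facet-complement iff at least two of the three cyclic gaps are $\geq 3$, or one gap equals $1$ or $2$ and the other two are both $\geq 3$. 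I would first nail down this combinatorial description cleanly (this is a finite case check on gap patterns), because the shelling order will be defined in terms of the sorted triple $(a,b,c)$ and its gap vector.

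Next I would impose a total order on these complementary $3$-sets $\tau$ — for instance, order $\tau$ by its sorted-tuple lexicographic value, or by the reverse-lex order on $(a,b,c)$, possibly after first grouping by which gaps are ``long'' — and verify the shelling condition: for facets $F < F'$ in the induced order, $\overline{F}\cap\overline{F'}$ is contained in some $\overline{F''}\cap\overline{F'}$ with $F'' < F'$ and $\overline{F''}\cap\overline{F'}$ a facet of $\partial\overline{F'}$ (equivalently, the standard condition that the minimal new faces added by $F'$ form an interval). Concretely, for each facet $\sigma = V(W_n)\setminus\{a,b,c\}$ one computes its \emph{restriction face} $R(\sigma)$ — the set of vertices $v\in\sigma$ such that $\sigma\setminus\{v\}$ is still a face but lies in some earlier facet — and one must check that $R(\sigma)$ itself is a face of $\sigma$ (this is automatic) and that every face between $R(\sigma)$ and $\sigma$ is ``new.'' The dimension of $\Delta_3(W_n)$ is $n-4$, so the complex is pure of that dimension; a shelling then gives that $\Delta_3(W_n)$ is homotopy equivalent to a wedge of spheres $\mathbb{S}^{n-4}$, one for each facet whose restriction $R(\sigma)$ equals all of $\sigma$ (the ``full restriction'' or ``homology'' facets). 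The final step is to count these homology facets and show the count is exactly $\binom{n-4}{2}-9$; I expect this to reduce to counting gap-vectors $(g_1,g_2,g_3)$ of a prescribed shape (all $g_i\geq 3$, say) up to the relevant symmetry, giving a quadratic-in-$n$ count from which one subtracts a constant correction coming from small-gap boundary cases — and $\binom{n-4}{2}-9$ is plausibly exactly $\binom{n-4}{2}$ minus a fixed number of exceptional configurations.

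The main obstacle will be choosing the total order on facets so that the shelling condition holds uniformly — squared cycles have just enough extra edges (the $\{i,i+2\}$ chords) beyond the ordinary cycle that the naive lexicographic order used for $\Delta_k(C_n)$ will likely fail at the ``wrap-around'' facets and at facets with a gap of exactly $2$, where the chord creates unexpected connections. I anticipate needing to break the facet set into a few families according to the pattern of short gaps (which gaps are $1$, which are $2$, which are $\geq 3$) and order the families in a carefully chosen sequence, then run the shelling check within and across families; the cross-family checks, where a facet in a ``later'' family must have its intersection with earlier facets absorbed, are the delicate part. A secondary technical point is the exact boundary behavior near $n = 9$: the conjecture is stated for $n\geq 9$ (with the shellability part for $n\geq k+6 = 9$), and the small values must be checked to agree with the general pattern, so I would verify $n = 9, 10, 11$ by hand or machine to anchor the induction-free combinatorial argument. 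Once the shelling order and the homology-facet characterization are in place, the Betti number computation and the wedge-of-spheres conclusion follow from the standard theory of shellable complexes, with the only remaining work being the enumeration.
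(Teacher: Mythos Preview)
Your high-level strategy---construct an explicit shelling of the pure $(n-4)$-dimensional complex $\Delta_3(W_n)$ and then count the spanning (``homology'') facets---is exactly what the paper does, and your identification of the facets with $3$-subsets $\tau\subset\mathbb{Z}/n$ whose induced subgraph is disconnected is correct (note your disconnectedness criterion simplifies: $W_n[\tau]$ is disconnected iff at least two of the three cyclic gaps are $\geq 3$; your second clause is redundant). So the skeleton is right.

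The genuine gap is that you have not actually proposed a shelling order, and finding one is the entire content of the proof. You suggest lexicographic or reverse-lex on sorted triples, or grouping by gap-pattern, and you correctly flag that naive lex will likely break at wrap-around and gap-$2$ facets---but you stop there. The paper's order is quite different from anything you sketch and is not something one would guess from gap combinatorics: they first reorder the vertex set $\{0,\dots,n-1\}$ ``middle-out'' as $\mathcal{O}=(m,\,m-1,\,m+1,\,m-2,\,m+2,\dots,0)$ with $m=\lfloor n/2\rfloor$ or $\lceil n/2\rceil$, partition the facets into blocks $\mathcal{T}_s$ according to the $\mathcal{O}$-smallest element of the complement, order within blocks by the remaining pair in ordinary $<$-lex, and then---crucially---\emph{displace} a small explicit family $\mathcal{D}$ of exceptional facets (four types, each specified by an exact complement pattern such as $\{\alpha_s-3,\alpha_s,\alpha_s+1\}$ with $\alpha_s=m+1$) to the end of their block. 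Without this displacement the order fails; the verification that the displaced order $\prec$ is a shelling is a long case analysis (three lemmas covering $F_j\notin\mathcal{D}$ with $F_i$ in the same or different block, and $F_j\in\mathcal{D}$).

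The spanning-facet count is likewise not a clean ``all gaps $\geq 3$'' enumeration. The paper characterizes spanning facets as exactly those with complement $\{\alpha_s,s_1,n-1\}$ where $s_1$ avoids an explicit (and $\alpha_s$-dependent) set of forbidden values; the count then breaks into three ranges of $\alpha_s$ and sums to $\binom{n-4}{2}-9$ after some algebra. Your expectation that the answer is ``$\binom{n-4}{2}$ minus a fixed number of exceptional configurations'' is morally right but the bookkeeping is tied to the specific shelling order, not to an order-free gap description. In short: the plan is sound, but the shelling order you would need is substantially more delicate than lex-on-gaps, and the paper's middle-out-plus-displacement construction is the missing idea.
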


Note that the sequence  $\binom{n-4}{2}-9 = \{1, 6, 12, 19, 27, \dots\}$ is OEIS A051936.

 We continued our investigation to analyze the $k$-cut complex $\Delta_k(C_n^p)$ for powered cycle graphs $C_n^p$ (for small values of $k$, $p$ and $n$) and computed their homology groups (with coefficient $\Z$) using SageMath (see Tables \ref{tab:2-cut}-\ref{tab:6-cut} in \Cref{section:future_directions}). 
By observing \Cref{tab:2-cut} and the fact that the complexes $\Delta_{2}(C_n)$ and $\Delta_2(C_n^2)$ (if nonvoid) are not shellable, we conjecture that if $\Delta_2(C_n^p)$ is nonvoid, then it is not shellable for $p \geq 3$ (\Cref{conjecture:2-cut}).

 It is proved in \cite[Proposition 7.11]{Bayer2024Cutcomplex} that $\Delta_3(C_n)$ is shellable for $n \geq 5 = 4+1$ (for $n\leq 4$, $\Delta_3(C_n)$ is void). In \Cref{theorem:main}, we have proved that $\Delta_3(C_n^2)$ is shellable for $n \geq 9 = 4.2 +1$.  Therefore, for $p \in \{1, 2\}$, $\Delta_3(C_n^p)$ is shellable for $n \geq 4p+1$. Based on these results and \Cref{tab:3-cut}, we conjecture that  $\Delta_3(C_n^p)$ is shellable for all $p \geq 3$ and  $n \geq 4p+1$ (\Cref{conjecture:3-cut powered cycle}).

 Tables \ref{tab:4-cut}, \ref{tab:5-cut} and \ref{tab:6-cut} bring up the question of whether the complexes $\Delta_k(C_n^p)$ (if nonvoid) are not shellable for $k\geq 4$ and $p\geq 3$. We also make a conjecture about the homotopy types of the $k$-cut complexes $\Delta_k(C_n^2)$ of squared cycles (\Cref{conjecture:k-cut squared cycle}).

This paper is organized as follows: In \Cref{section:prel}, we provide the necessary preliminaries related to graph theory and simplicial complexes. In \Cref{section:mainproof}, we give the proof of  \Cref{theorem:main}. This section is divided into two subsections.  To prove \Cref{theorem:main}, 
we define an order in the facets of $\Delta_3(C_n^2)$ and in \Cref{subsection:shelling order}, we show that this order is a shelling order. In \Cref{subsection:spanning facets}, we characterize and count the number of spanning facets for the shelling order to conclude the number of spheres appearing in the wedge in the homotopy type of $\Delta_3(C_n^2)$. Finally, in \Cref{section:future_directions}, based on our SageMath data, we propose several conjectures and questions about the shellability and homotopy types of the complexes $\Delta_k(C_n^p)$.   

\section{Preliminaries} \label{section:prel}

 In this section, we recall some basic definitions and results used in this article. 
\subsection{Graph}
A  \textit{graph} $G$ is a  pair $(V(G), E(G))$,  where $V(G)$  is the set of vertices
of $G$  and $E(G) \subseteq \binom{V(G)}{2}$
denotes the set of edges.
 If $\{x, y\} \in E(G)$, it is also denoted by $x \sim y$ and we say that $x$ is adjacent to $y$.  
 A {\it subgraph} $H$ of $G$ is a graph with $V(H) \subseteq V(G)$ and $E(H) \subseteq E(G)$.
For a subset $U \subseteq V(G)$, the \textit{induced subgraph} $G[U]$ is the subgraph whose set of vertices is $V(G[U]) = U$
and the set of edges is
 $E(G[U]) = \{\{a, b\} \in E(G) \ | \ a, b \in U\}$.

 For $x, y \in V(G)$, a {\it path} from $x$ to $y$ is  a sequence $x v_0 \ldots v_n y$ of vertices of $G$ such that $x \sim v_0, v_n \sim y$ and $v_i \sim v_{i+1}$ for all $0 \leq i\leq n-1$.  The number of edges in a path is the {\it length} of the path. 
 
 A graph is \textit{connected} if there exists a path between each pair of vertices. A graph is \textit{disconnected} if it is not connected. The graph with the empty set $\emptyset$ as its set of vertices is considered connected.
 
 Let $n \geq 2$ be a positive integer and $S \subset \{ 1, 2, \ldots, n-1\}$. 
 The {\it circulant graph} $C_n(S)$ is the graph whose set of vertices is $V(C_n(S)) =  \{0, 1, 2, \ldots, n-1\}$  and any two vertices $x$ and $y$ are
 adjacent if and only if $x-y \ (\text{mod \ $n$}) \in S \cup -S$, where $-S = \{n-a \ | \ a \in S\}$.
  Circulant 
 graphs are also Cayley graphs of $\mathbb{Z}_n$, the cyclic group on 
 $n$ elements.

We refer the reader to  \cite{bondy1976graph} and \cite{west} for more details about the graphs.

\subsection{Simplicial complex}
A {\it finite abstract simplicial complex} $\Delta$ is a collection of finite sets such that if $\tau \in \Delta$ and $\sigma \subset \tau$, then $\sigma \in \Delta$. 
The elements  of $\Delta$ are called {\it simplices} of $\Delta$. If $\sigma \subset \tau$, we say that $\sigma$ is a {\it face} of $\tau$.  The dimension of a simplex $\sigma$ is equal to $|\sigma| - 1$. 
The dimension of an abstract simplicial complex is the maximum of the dimensions of its simplices. The $0$-dimensional simplices are called \textit{vertices} of $\Delta$, and the set of vertices of $\Delta$ is denoted by $V(\Delta)$. If a simplex has dimension $d$, it is said to be $d$-{\it dimensional}. 
An abstract simplicial complex which is an empty collection of sets is called the \textit{void} abstract simplicial complex, and is denoted by $\emptyset$.

The {\it boundary} of a $d$-dimensional simplex $\sigma $ is the simplicial complex, consisting of  all  faces of $\sigma$ of dimension $\leq d-1$ and it is denoted by $Bd(\sigma).$
A simplex that is not a face of any other
 simplex is called a  {\it maximal simplex} or \textit{facet}. The set of maximal simplices of $\Delta$ is denoted by $M(\Delta)$. A simplicial complex is called
 {\it pure $d$-dimensional}, if all of its maximal simplices are of dimension $d$.  A \textit{subcomplex} $\Delta'$ of $\Delta$ is a simplicial complex such that $\sigma\in\Delta'$ implies $\sigma\in\Delta$.

 In this article, we consider any simplicial complex as a topological space, namely, its geometric realization. For the definition of geometric realization, we refer to the book \cite{Kozlov2008} by Kozlov. For terminology of algebraic topology used in
this article, see \cite{hatcher2005algebraic}.

Let $\Delta$ be a simplicial complex and let  $\tau$ be a simplex of $\Delta$.
The \textit{link} of $\tau$ is the   simplicial complex   defined as 
$$lk_{\Delta}(\tau) := \{\sigma\in \Delta \ | \ \sigma\cap \tau = \emptyset, \text{ and } \sigma \cup \tau \in \Delta\}.$$

    The \textit{deletion} of $\tau$ is the  simplicial complex 
 defined as 
$$dl_{\Delta}(\tau) := \{\sigma \in \Delta \ | \ \tau \not\subseteq\sigma\}.$$ 

\begin{definition}
A pure simplicial complex $\Delta$ is called \textit{vertex decomposable} if it is empty or if there exists a vertex $v$ of $\Delta$ such that both
 $lk_\Delta(v)$ and  $dl_\Delta(v)$ are vertex decomposable.
\end{definition}

\begin{definition}
    A complex $\Delta$ is \textit{$p$-acyclic} over a field $\mathbb{K}$ if the reduced homology group $\tilde{H}_i(\Delta;\mathbb{K})$ vanishes for $i \leq p$.
\end{definition}

\begin{definition}
    Let $\Delta$ be a pure simplicial complex and $\mathbb{K}$ be a field. Then $\Delta$ is \textit{Cohen-Macaulay} over $\mathbb{K}$ if
$lk_{\Delta}(\sigma)$ is $(dim\ lk_{\Delta}(\sigma) - 1)$-acyclic for each $\sigma$ in $\Delta$.    
\end{definition}

\subsection{Shellability}

\begin{definition} \cite[Section 12.1]{Kozlov2008} A simplicial complex $\Delta$ is called \textit{shellable} if its facets can be arranged in a linear order $F_1, F_2, \ldots, F_t$ in such a way that for all $2\leq j\leq t$, the subcomplex $(\bigcup_{i=1}^{j-1} F_i) \cap  F_j$ is pure and of dimension $|F_j|-2$. 
   
In other words, a simplicial complex $\Delta$ has a \textit{shelling order} $F_1, F_2, \ldots, F_t$ of its facets if and only if for any $i, j$ satisfying $1 \leq i < j \leq t$, there exists $1\leq r < j$ such that  $F_r \cap F_j = F_j\setminus\{\lambda\}$ for some $\lambda\in F_j\setminus F_i$. 
\end{definition}

A facet $F_k$ $(1< k \leq t)$  is called {\it spanning} with respect to the given shelling
order if $Bd(F_k) \subseteq \bigcup\limits_{i = 1}^{k-1} F_i$.  It is easy to check that $F_k$ is  a spanning facet  if for each $\lambda\in F_k$, there exists $r < k$ such that $F_r \cap F_k = F_k \setminus\{\lambda\}$.



\begin{theorem}\cite[Theorem 12.3]{Kozlov2008}\label{theorem:wedge}
Let $\Delta$ be a pure shellable simplicial complex of dimension $d$. Then  $\Delta$ has the homotopy type of a wedge of $\beta$ spheres of dimension  $d$, where $\beta$ is the number of total spanning facets in a given shelling. Hence $$\Delta \simeq \bigvee\limits_{\beta} \mathbb{S}^{d}.$$ 
\end{theorem}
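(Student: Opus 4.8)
The plan is to prove \Cref{theorem:wedge} by induction on the number of facets $t$, using the definition of shellability to peel off one facet at a time and tracking how each new facet is glued to the union of the previous ones. Write $\Delta_j = \bigcup_{i=1}^{j}\langle F_i\rangle$ for $1\le j\le t$, so that $\Delta_1 = \langle F_1\rangle$ is a $d$-simplex (contractible) and $\Delta_t = \Delta$. By the shellability hypothesis, for each $j\ge 2$ the intersection $\Delta_{j-1}\cap\langle F_j\rangle$ is a pure $(|F_j|-2)$-dimensional subcomplex of the boundary $\partial\langle F_j\rangle$, hence it is a union of some of the $(d-1)$-dimensional faces of the $d$-simplex $\langle F_j\rangle$; call this subcomplex $R_j$. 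The key dichotomy is: either $R_j$ is a proper subcomplex of $\partial\langle F_j\rangle$ (i.e., at least one codimension-$1$ face of $F_j$ is missing), or $R_j = \partial\langle F_j\rangle$ (all $d+1$ of them are present). I claim $R_j = \partial\langle F_j\rangle$ precisely when $F_j$ is a spanning facet in the sense of \Cref{def:spanning facets}: indeed, a facet $F_i\cap F_j = F_j\setminus\{\lambda\}$ occurring as an intersection is exactly a codimension-$1$ face of $\langle F_j\rangle$ lying in $\Delta_{j-1}$, and $F_j$ is spanning iff every $\lambda\in F_j$ arises this way, iff every one of the $d+1$ facets of $\partial\langle F_j\rangle$ lies in $R_j$.

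Now I proceed with the homotopy analysis. At step $j$ we form $\Delta_j = \Delta_{j-1}\cup_{R_j}\langle F_j\rangle$. In the non-spanning case, $R_j$ is a nonempty (it is pure of dimension $|F_j|-2\ge 0$, so contains at least one facet), proper subcomplex of $\partial\langle F_j\rangle\cong\mathbb{S}^{d-1}$; since $\langle F_j\rangle$ is a cone (hence contractible), collapsing $\langle F_j\rangle$ does not change the homotopy type provided $R_j$ is contractible, and a proper, pure, full-dimensional subcomplex of the boundary sphere of a simplex is collapsible, hence contractible. Therefore $\Delta_j\simeq \Delta_{j-1}$ in the non-spanning case. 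In the spanning case $R_j = \partial\langle F_j\rangle\cong\mathbb{S}^{d-1}$, and attaching the $d$-cell $\langle F_j\rangle$ along its entire boundary $d$-sphere has the effect, up to homotopy, of wedging on one additional $d$-sphere: $\Delta_j\simeq\Delta_{j-1}\vee\mathbb{S}^{d}$. This last step uses that $\Delta_{j-1}$ is already built and that the attaching map $\partial\langle F_j\rangle\to\Delta_{j-1}$ can be assumed (after a homotopy, or by using the fact that $\Delta_{j-1}$ is $(d-1)$-connected at the relevant stage — more carefully, one argues via the long exact sequence of the pair or via the cofiber sequence) to be null-homotopic, so the cofiber is $\Delta_{j-1}\vee\mathbb{S}^d$.

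Chaining these equivalences from $j=2$ to $j=t$ and starting from the contractible $\Delta_1$, we get $\Delta = \Delta_t \simeq \bigvee_{\beta}\mathbb{S}^d$ where $\beta$ is the number of indices $j$ ($1<j\le t$) for which $F_j$ is a spanning facet, which is exactly the total number of spanning facets in the given shelling. This proves the claim. The main obstacle is making the spanning-case wedge step rigorous: one must justify that the attaching map of the new $d$-cell along its full boundary sphere is null-homotopic in $\Delta_{j-1}$ so that the pushout is genuinely a wedge rather than some twisted cell attachment. The clean way to handle this is to work with (reduced) homology first — each non-spanning step leaves homology unchanged and each spanning step adds one free generator in $\tilde H_d$ by Mayer--Vietoris (since $\tilde H_*(\langle F_j\rangle)=0$ and $\tilde H_*(\partial\langle F_j\rangle)$ is $\mathbb{Z}$ in degree $d-1$), so $\tilde H_d(\Delta)\cong\mathbb{Z}^\beta$ and $\tilde H_i(\Delta)=0$ otherwise — and then invoke simple-connectivity (for $d\ge 2$; the cases $d\le 1$ are handled directly) together with the fact that a shellable complex is homotopy equivalent to a CW complex with cells only in dimensions $0$ and $d$, so the minimal such complex is forced to be $\bigvee_\beta\mathbb{S}^d$. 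Alternatively, one cites the standard argument that the quotient $\Delta_j/\Delta_{j-1}$ is $\mathbb{S}^d$ in the spanning case and a point in the non-spanning case, and that these quotient maps admit sections up to homotopy because $\Delta_{j-1}\hookrightarrow\Delta_j$ is a cofibration with contractible or sphere cofiber.
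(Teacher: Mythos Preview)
The paper does not supply a proof of this statement: \Cref{theorem:wedge} is quoted verbatim as \cite[Theorem 12.3]{Kozlov2008} and used as a black box in the proof of \Cref{theorem:main}. There is therefore no ``paper's own proof'' to compare against; you have supplied a proof where the paper simply cites one.

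That said, your argument is essentially the standard one (and is, in outline, the argument in Kozlov's book). The identification of the dichotomy spanning/non-spanning with $R_j=\partial\langle F_j\rangle$ versus $R_j\subsetneq\partial\langle F_j\rangle$ is correct, and your observation that a proper nonempty union of boundary facets of a simplex is a cone (on any vertex opposite a missing facet) cleanly handles the non-spanning step. You are right to flag the spanning step as the delicate point. The cleanest way to make it rigorous is the one you sketch second: show inductively that $\Delta_{j-1}$ is homotopy equivalent to a CW complex with one $0$-cell and some $d$-cells, hence is $(d-1)$-connected for $d\ge 2$, so the attaching map $\mathbb{S}^{d-1}\to\Delta_{j-1}$ is null-homotopic and the pushout is $\Delta_{j-1}\vee\mathbb{S}^d$; the cases $d\le 1$ are then handled directly as you indicate. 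Your first alternative (compute homology via Mayer--Vietoris and then invoke simple connectivity) also works but requires a separate check that $\Delta$ is simply connected when $d\ge 2$, which again follows from the CW model. Either route closes the gap.
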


\section{Proof of \Cref{theorem:main}}\label{section:mainproof}

 In this section, we prove the main result (\Cref{theorem:main}) of this article. Throughout the section, we fix    $n\geq 9$. Our aim is to define an order $\prec$ in the facets of $\Delta_3(C_n^2)$, which provides a shelling order. 

   Let
 
    \[ m:= \begin{cases*}
                    \frac{n+1}{2} & if $n$ is odd,   \\
                     \phantom{}\frac{n}{2} & if $n$ is even.
                 \end{cases*} \]

We arrange the elements of $V(C_n^2)=\{0,1,2,\ldots,n-1\}$ into an ordered set  $\mathcal{O} :=(\al_1,\al_2,\ldots,\al_n)$, where $\al_t  = m + (-1)^{t-1} \lfloor (t/2) \rfloor \ ( \text{mod} \ n)$ $\forall$ $1 \leq t \leq n$ and $\lfloor (t/2) \rfloor$ denotes the greatest integer less than or equal to $t/2$.   Hence \[ \cO= \begin{cases*}
                (m, m-1, m+1, m-2, m+2, \dots,n-2,2,n-1,1, 0), &if $n$ is odd,   \\
                     (m, m-1, m+1, m-2, m+2,\dots,2,n-2,1,n-1, 0), & if $n$ is even.
                 \end{cases*} \]  
                 
    We define an order $<_{\cO}$ in the elements  of $V(C_n^2)$ as follows: For any $x,y\in V(C_n^2)$, we say that $x<_{\cO} y$ if and only if
    $x=\al_s$ and $y=\al_t$ for some $1\leq s < t\leq n$. 

\begin{remark}\label{remark:order in S}
Let $x,y\in V(C_n^2)$.
   \begin{enumerate}[label=(\roman*)]
       \item If $x<m$, then $x<_{\cO} y$  if and only if either $y<x$ or $y\geq 2m-x$. \label{part:i}
        \item If $x\geq m$, then $x<_{\cO} y$  if and only if either $y< 2m-x$ or $y>x$. \label{part:ii}
    \item If $y<m$, then $x<_{\cO} y$  if and only if $y<x< 2m-y$. \label{part:iii}
     \item    If $y> m$, then $x<_{\cO} y$  if and only if $2m-y\leq x<y$. \label{part:iv}
   \end{enumerate}
        
\end{remark}   

 Throughout this section, we denote the complement of a set  $X \subseteq V(C_n^2)$ by $X^c$. 
 
 Let $\T$ be the set of all $(n-3)$-subsets of $V(C_n^2)$. We partition $\T$ into disjoint sets $\T_s$ as follows:
Let $\T_1 := \{\tau \in \mathcal{T}: \al_1\in \tau^c \}.$  For $s>1$, define
\[\T_{s} := \{ \tau \in \mathcal{T}: \al_s\in  \tau^c  \ \text{and } \al_1,\al_2, \dots,\al_{s-1}\notin\tau^c \}.\]  Clearly $\mathcal{T}= \bigsqcup \T_s$. 

 Throughout the section, if $\tau \in \T$ such that $\tau \in \T_s$ for some $s\geq 1,$ and $\tau^c = \{\al_s\} \sqcup \{s_1, s_2\}$, then we assume that $s_1 < s_2$.

 \begin{remark}\label{remark:Aj}
     Let $\tau \in \T$ be such that $\tau \in \T_s$ for some $s\geq 1,$ and ${\tau}^c = \{\al_s\} \sqcup \{s_1, s_2\}$. Observe that $\al_s\os s_1, s_2$ by definition of $\T_s$.
 \end{remark}  

Let $M(\Delta_3(C_n^2))$ be the set of facets of $\Delta_3(C_n^2)$. For any $F \in M(\Delta_3(C_n^2))$, we have $|F| =  n-3$ and $C_n^2[F^c]$ is disconnected. Clearly, $M(\Delta_3(C_n^2)) \subset \T=\bigsqcup \T_s$.

\begin{remark}\label{remark:range of al_s}
     Let $F \in M(\D{3}{C_n^2})$ such that $F\in\T_s$ and $F^c=\{\al_s\}\sqcup\{s_1,s_2\}$. Observe that since $C_n^2[F ^c]$ is disconnected and $\al_s\os s_1,s_2$, we get $\al_s\notin\{0,1,n-1\}$. Hence $2\leq\al_s\leq n-2$.
\end{remark}

\begin{definition}\label{definition:order_ll}
 Define an order $\ll$ in the elements of $M(\Delta_3(C_n^2))$ as follows: Let $F, F' \in M(\Delta_3(C_n^2))$. Let $F \in \T_s$ and $F' \in \T_t$ such that $F^c = \{\al_s\} \sqcup \{s_1, s_2\}$ and $F'^c = \{\al_t\} \sqcup \{t_1, t_2\}$. Then 
		$F \ll F'$ if and only if any  one of the following conditions is true: 
   \begin{enumerate}[label=(\roman*)]
     \item \label{s=t} If $s=t$ \textit{i.e.}, $\al_s=\al_t$, then either $s_1 < t_1$ or, $s_1 = t_1$ and $s_2 < t_2$.
     \item \label{s<t} If $s\neq t$ \textit{i.e.}, $\al_s\neq\al_t$, then $s<t$ \textit{i.e.}, $\al_s\os\al_t$.
   \end{enumerate} 
\end{definition}

    It is immediate that \Cref{definition:order_ll} defines a total order on $M(\Delta_3(C_n^2))$.
   
 To get our desired shelling order  $\prec$, we slightly modify the order $\ll$ by ``displacing" some elements of the poset $(M(\D{3}{C_n^2}), \ll)$ and get a new poset $(M(\D{3}{C_n^2}), $ $ \prec)$. For this, we first define a subset $\cD$ of $M(\Delta_3(C_n^2))$ as follows: A facet $F \in \cD$  if  and only if there exists  $s$ such that $F\in\T_s$ and $F$ satisfies any of the following conditions: 

\begin{enumerate}[label=$(\mathcal{D}_{\arabic*})$]
    
     \item\label{d1} ${F}^c = \{\al_s\}\sqcup\{\al_s-3,\al_s+1\}$, where  $\al_s=m+1$.

    \item\label{d2} ${F}^c = \{\al_s\}\sqcup\{\al_s-1,\al_s+3\}$, where $\al_s=m-1$.

    \item\label{d3} ${F}^c = \{\al_s\}\sqcup\{\al_s-4, \al_s-3\}$, where  $\al_s=m+1$. 
    
    \item\label{d4} 
    ${F}^c = \{\al_s\}\sqcup\{\al_s+3,\al_s+4\}$, where $\al_s\in\{m-1, m, \dots, n-6\}$.
      
\end{enumerate}

\begin{definition}\label{def:prec}
 Let $F,F'\in M(\D{3}{C_n^2})$.    The order $\prec$ on $M(\Delta_3(C_n^2))$ is defined as follows: We say $F \prec F'$ if and only if any  one of the following conditions is true:
\begin{enumerate}[label=(\roman*)]
    \item \label{def 1} $F, F' \notin \cD$ and $F \ll F'$.
    \item \label{def 2} $F\in\cD$, $F' \notin \cD$ such that  $F\in\T_s$, $F'\in\T_t$ for some $s<t$.
    \item \label{def 3} $F \notin \cD$,  $F' \in \cD$ such that $F\in\T_s$, $F'\in\T_t$, and either $s=t$ or  $s<t$ for some $s$ and $t$.
    \item \label{def 4} $F, F' \in \cD$ and $F \ll F'$.  
\end{enumerate}
\end{definition}

It can be easily observed that $\prec$ is a total order.



 \begin{proposition}\label{proposition:precedes}
    Let $F, F' \in M(\Delta_3(C_n^2))$. Let  $F \in \T_s$ and $F' \in \T_t$ for some $s \neq t$ such that  ${F}^c = \{\al_s\} \sqcup \{s_1, s_2\}$ and ${F'}^c = ({F}^c \setminus \{\nu\}) \sqcup \{\lambda\}$ for some $\nu \in {F}^c$ and $\lambda \in F$.  If   $\lambda<_{\cO} \alpha_s$,  then $F' \prec F$. 
    
 \end{proposition}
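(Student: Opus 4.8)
The plan is to show that the hypothesis $\lambda <_{\cO}\alpha_s$ forces the facet $F'$ to lie in a strictly earlier block $\T_t$ with $t<s$; once this is established, $F'\prec F$ follows immediately from \Cref{def:prec}, since each of its four clauses declares the facet in the earlier block to precede the one in the later block.

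First I would record the structural observation underpinning everything. By \Cref{remark:Aj}, $\alpha_s$ is the $<_{\cO}$-minimum of $F^c=\{\alpha_s\}\sqcup\{s_1,s_2\}$; that is, $\alpha_s<_{\cO}s_1$ and $\alpha_s<_{\cO}s_2$. Since $F'^c=(F^c\setminus\{\nu\})\sqcup\{\lambda\}$, every element of $F'^c$ other than $\lambda$ already belongs to $F^c$ and is therefore $\geq_{\cO}\alpha_s$; as $\lambda<_{\cO}\alpha_s$ by hypothesis, it follows that $\lambda$ is the $<_{\cO}$-minimum of $F'^c$. Write $\lambda=\alpha_t$. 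By the definition of $<_{\cO}$, the relation $\lambda<_{\cO}\alpha_s$ gives $t<s$, and the $<_{\cO}$-minimality of $\lambda$ in $F'^c$ says precisely that none of $\alpha_1,\dots,\alpha_{t-1}$ lies in $F'^c$. Hence, by the definition of the blocks, $F'\in\T_t$ with $t<s$.

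To finish, I would run through \Cref{def:prec} applied to the pair $(F',F)$, using $F'\in\T_t$, $F\in\T_s$ and $t<s$. If $F',F\notin\cD$, then clause \ref{def 1} applies, since $t<s$ already yields $F'\ll F$. If $F'\in\cD$ and $F\notin\cD$, clause \ref{def 2} applies verbatim because $t<s$. If $F'\notin\cD$ and $F\in\cD$, clause \ref{def 3} applies, its relevant alternative ($t\le s$) being satisfied by $t<s$. Finally, if $F',F\in\cD$, clause \ref{def 4} applies, again because $t<s$ implies $F'\ll F$. In every case $F'\prec F$, as claimed.

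I do not expect a genuine obstacle here: the statement is essentially a bookkeeping lemma isolating the one situation — swapping into $F^c$ a vertex $\lambda$ that is $<_{\cO}$-smaller than $\alpha_s$ — in which passing from $F$ to $F'$ moves to a strictly earlier block, and the proof is a direct unwinding of the definitions of $\T_s$, $<_{\cO}$, and $\prec$. The only point requiring mild care is verifying that membership of $F$ or $F'$ in $\cD$ does not disturb the conclusion, which is exactly what the short case split above handles.
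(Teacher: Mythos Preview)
Your proof is correct and follows essentially the same approach as the paper's: identify $\lambda$ as the $<_{\cO}$-minimum of $F'^c$, deduce $t<s$, and invoke \Cref{def:prec}. Your treatment is slightly more streamlined (handling all $\nu$ at once via $F'^c\setminus\{\lambda\}\subseteq F^c$ rather than splitting into the cases $\nu=\alpha_s$ and $\nu\in\{s_1,s_2\}$) and more explicit in checking the four clauses of \Cref{def:prec}, but the substance is the same.
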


\begin{proof}
    

      Since $\al_s\os s_1,s_2$ by \Cref{remark:Aj}, $\lambda\os\al_s$ implies that $\lambda\os s_1,s_2$. Therefore $F'^c=\{\al_t\}\sqcup\{t_1,t_2\}$, where $\al_t=\lambda$ such that $t<s$ and $t_1,t_2\in F^c$. Hence $F' \prec F$ by \Cref{def:prec}.  
 \end{proof}
 
To prove \Cref{theorem:main},  in \Cref{subsection:shelling order}, we first show that the order $\prec$ given in \Cref{def:prec} provides a shelling order for $\D{3}{C_n^2}$. Later, in \Cref{subsection:spanning facets}, we identify and count all the spanning facets for this shelling order.

\subsection{Shelling Order}\label{subsection:shelling order}
In this section, we prove that the order $\prec$ given in \Cref{def:prec}  provides a shelling order for $\D{3}{C_n^2}$. To demonstrate that $\prec$  provides a shelling order for $\D{3}{C_n^2}$,  by definition of shellability, we need to prove the following: For  any $F_i, F_j\in M(\D{3}{C_n^2})$ with $F_i \prec F_j$, there is a facet $ F_r$  such that

 \begin{equation}  \label{eqn:super}
         F_r \prec F_j \text{ and } F_r \cap F_j = F_j\setminus\{\lambda\} \text{ for some } \lambda\in F_j\setminus F_i. \hfill \tag{\raisebox{-0.35ex}{\huge$\ast$}}
\end{equation}

\begin{proposition}\label{proposition:hash}
     Let $F_i \prec F_j$ be two elements of $M(\D{3}{C_n^2})$.  If there is an $F \in M(\D{3}{C_n^2})$ such that $F \prec F_j$ and $F^c=(\di{F_j^c}{\nu})\sqcup\{\lambda\}$ for some $\nu\in F_j^c$ and $\lambda\in F_j\setminus F_i$, then $F_r=F$ satisfies \eqref{eqn:super} for the pair $F_i \prec F_j$.     
 \end{proposition}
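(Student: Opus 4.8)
The plan is to observe that Proposition~\ref{proposition:hash} is essentially a bookkeeping statement: it asserts that if we can find \emph{any} facet $F$ which sits below $F_j$ in the order $\prec$ and whose complement differs from $F_j^c$ by swapping out some $\nu \in F_j^c$ for some $\lambda \in F_j \setminus F_i$, then that very $F$ already serves as the witness $F_r$ required by the shelling condition \eqref{eqn:super}. So the proof amounts to checking that $F$ literally verifies the two clauses of \eqref{eqn:super}.

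First I would recall what \eqref{eqn:super} demands for the pair $F_i \prec F_j$: a facet $F_r$ with (a) $F_r \prec F_j$, and (b) $F_r \cap F_j = F_j \setminus \{\lambda\}$ for some $\lambda \in F_j \setminus F_i$. Clause (a) is handed to us directly by hypothesis, since $F \prec F_j$. For clause (b), I would set $F_r = F$ and compute $F_r \cap F_j$. Since $F^c = (F_j^c \setminus \{\nu\}) \sqcup \{\lambda\}$ with $\nu \in F_j^c$ and $\lambda \in F_j$ (so in particular $\lambda \notin F_j^c$, hence $\lambda \neq \nu$), taking complements in $V(W_n)$ gives $F = (F_j \cup \{\nu\}) \setminus \{\lambda\} = (F_j \setminus \{\lambda\}) \cup \{\nu\}$, where the union is disjoint because $\nu \notin F_j$. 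Intersecting with $F_j$ and using $\nu \notin F_j$ yields $F \cap F_j = F_j \setminus \{\lambda\}$. Finally, the hypothesis explicitly provides that $\lambda \in F_j \setminus F_i$, so this $\lambda$ is an admissible choice for clause (b). Hence $F_r = F$ satisfies \eqref{eqn:super}.

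There is really no obstacle here; the only mild subtlety is making sure the complementation arithmetic is carried out cleanly and that one notes $\lambda \neq \nu$ (forced by $\lambda \in F_j$, $\nu \in F_j^c$) so that the set operations behave as claimed. I would also remark that $|F^c| = |F_j^c| = 3$ is automatically preserved by the swap, so $F$ is indeed an $(n-3)$-subset, consistent with $F \in M(\Delta_3(W_n))$ as assumed. This proposition is a reusable lemma: in the subsequent casework establishing the shelling order, one repeatedly produces a candidate $F$ by a single vertex-swap in the complement and checks only that $F$ is a genuine facet lying below $F_j$, after which Proposition~\ref{proposition:hash} finishes the job mechanically.
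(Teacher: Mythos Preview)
Your proof is correct and follows essentially the same approach as the paper: both verify $F\prec F_j$ directly from hypothesis, then pass from $F^c=(F_j^c\setminus\{\nu\})\sqcup\{\lambda\}$ to $F=(F_j\setminus\{\lambda\})\sqcup\{\nu\}$ by complementation, and conclude $F\cap F_j=F_j\setminus\{\lambda\}$ with $\lambda\in F_j\setminus F_i$. Your version simply spells out a couple of bookkeeping details (e.g., $\lambda\neq\nu$) that the paper leaves implicit.
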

 
\begin{proof} 

    We already have $F \prec F_j$. Since $F^c=(\di{F_j^c}{\nu})\sqcup\{\lambda\}$, we get $F=(\di{F_j}{\lambda})\sqcup\{\nu\}$ and thus,  $F \cap F_j = F_j\setminus\{\lambda\}$, where $\lambda\in F_j\setminus F_i$. Hence $F_r=F$ satisfies \eqref{eqn:super} for the pair $F_i \prec F_j$.
 \end{proof}

Let  $F_i, F_j \in M(\Delta_3(C_n^2))$ such that $F_i \prec F_j$. Let   $F_i \in \T_{i_0}$ and $ F_j \in \T_{j_0}$, where   ${F_i}^c = \{\al_{i_0}\}\sqcup\{i_1,i_2\}$ and  ${F_j}^c = \{\al_{j_0}\}\sqcup\{j_1,j_2\}$. By our assumption, we have $i_1<i_2$ and $j_1<j_2$. For convenience, we drop the subscripts $i_0$ and $j_0$ from $\al_{i_0}$ and $\al_{j_0}$, and refer to them simply as $\al_i$ and $\al_j$, respectively. Our aim is to find an $F_r$ which satisfies \eqref{eqn:super} for the pair $F_i\prec F_j$.

We deal with the cases $F_j \in \cD$ and $F_j \notin \cD$ separately. The case $F_j \in \cD$ is considered in \Cref{case:D}, and the case  $F_j \notin \cD$  is considered in Lemmas \ref{case:A} and \ref{case:B}. In \Cref{case:A}, we deal with the case $i_0 = j_0$, and in \Cref{case:B}, we deal with the case $i_0 \neq j_0$. 
 Recall that for any $x,y\in V(C_n^2)$, $x \sim y$ denotes that $x$ is adjacent to $y$ in $C_n^2$ \textit{i.e.}, $\{x,y\}\in E(C_n^2)$.  

\begin{lemma}\label{case:A}
    Suppose that $F_j\notin\cD$. If $F_i\in\T_{j_0}$, then there exists an $F_r\in M(\Delta_3(C_n^2))$ that satisfies \eqref{eqn:super} for the pair $F_i\prec F_j$.
\end{lemma}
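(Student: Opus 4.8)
\textbf{Proof proposal for Lemma \ref{case:A}.}

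The plan is to exploit \Cref{proposition:hash}: it suffices to produce a facet $F \in M(\D{3}{W_n})$ with $F \prec F_j$ whose complement is obtained from $F_j^c = \{\al_{j_0}\} \sqcup \{j_1, j_2\}$ by swapping out one vertex $\nu \in F_j^c$ and swapping in a vertex $\lambda \in F_j \setminus F_i$. Since $F_i \in \T_{j_0}$, both $F_i^c$ and $F_j^c$ contain the vertex $\al_{j_0} = \al_i = \al_j$, so $F_i^c = \{\al_{j_0}\} \sqcup \{i_1, i_2\}$ and the disagreement between the two complements is concentrated entirely in the pairs $\{i_1, i_2\}$ versus $\{j_1, j_2\}$. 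Because $F_i \prec F_j$ (and both lie in the same $\T_{j_0}$), the definition of $<_{\T_{j_0}}$ forces $j_1 > i_1$, or $j_1 = i_1$ and $j_2 > i_2$; in particular at least one of $j_1, j_2$ lies strictly after the corresponding $i_\bullet$ in the order $<_{\cO}$, and that vertex is a legitimate candidate for $\lambda \in F_j \setminus F_i$.

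First I would reduce to finding a single vertex $\mu \in F_j$ such that replacing one of $j_1, j_2$ (call it $\nu$) by $\mu$ still gives a disconnected induced subgraph — i.e. $W_n[(\{\al_{j_0}\}\sqcup\{j_1,j_2\}) \setminus \{\nu\} \cup \{\mu\}]$ is disconnected — and such that the resulting facet $F$ satisfies $F \prec F_j$ and $\mu \in F_j \setminus F_i$. The order $\prec$ agrees with $\ll$ (hence with $<_{\T_{j_0}}$) on facets of $\T_{j_0}$, except that membership in $\cD$ can displace facets; since $F_j \notin \cD$, I must be careful that the $F$ I produce either also lies outside $\cD$ with smaller $<_{\T_{j_0}}$-value, or lies in $\cD$ (in which case \Cref{def:prec}\ref{def 3} gives $F \prec F_j$ automatically when they share the same index $j_0$). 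This last observation is convenient: any facet in $\cD \cap \T_{j_0}$ automatically precedes $F_j$, so the only real constraint is to find $F \in \T_{j_0}$ that is a disconnected cut and is $<_{\T_{j_0}}$-smaller than $F_j$ whenever it is not in $\cD$.

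The combinatorial heart of the argument is a case analysis on the three-vertex set $F_j^c = \{\al_{j_0}, j_1, j_2\}$ inside the circulant graph $W_n$. Since $W_n[F_j^c]$ is disconnected and $\al_{j_0} <_{\cO} j_1, j_2$ (\Cref{remark:Aj}), \Cref{remark:range of al_s} pins $\al_{j_0}$ to $\{2, \dots, n-2\}$, and I would translate "disconnected" into the concrete statement that the three residues, viewed cyclically, do not all fit within a window of consecutive integers with consecutive gaps $\le 2$; equivalently at least one pair among them is at cyclic distance $\ge 3$. Using \Cref{remark:order in S} to read off which replacements decrease the $<_{\T_{j_0}}$-order, I would, in each sub-case, lower $j_2$ (or $j_1$) toward $\al_{j_0}$ by the smallest amount that keeps the set disconnected — typically sliding $j_2$ down to $j_2 - 1$, or all the way to a canonical position like $\al_{j_0}+3$ or $\al_{j_0}+4$ — and check (a) the new set is still a disconnected cut, (b) the moved vertex is $\lambda = j_2 \in F_j \setminus F_i$ or is otherwise accounted for, and (c) $F \prec F_j$. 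The families $\ref{d1}$–$\ref{d4}$ defining $\cD$ are exactly the "obstruction" configurations where the naive decrement would land on a set that is connected or on $F_i$ itself, which is why those facets were displaced; I expect the main obstacle to be precisely the bookkeeping around these boundary configurations, ensuring that when $F_i$ blocks the obvious choice of $\lambda$, the alternative coordinate $j_1$ (or a two-step move) still yields a valid $F_r$, and that the displaced-into-$\cD$ facet genuinely precedes $F_j$ under \Cref{def:prec}. Once every sub-case produces such an $F$, \Cref{proposition:hash} finishes the lemma.
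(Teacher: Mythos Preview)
Your plan contains two genuine errors that would make the argument fail.

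\textbf{Error 1: wrong candidate for $\lambda$.} The vertex $\lambda$ in \eqref{eqn:super} must lie in $F_j\setminus F_i$. Since $F_i^c=\{\al_{j_0},i_1,i_2\}$ and $F_j^c=\{\al_{j_0},j_1,j_2\}$, one has $F_j\setminus F_i = F_i^c\setminus F_j^c \subseteq \{i_1,i_2\}$. In particular $\lambda$ must be $i_1$ or $i_2$; it can never be $j_1$, $j_2$, $j_2-1$, or ``a canonical position like $\al_{j_0}+3$''. When you write ``the moved vertex is $\lambda=j_2\in F_j\setminus F_i$'' you have the swap backwards: $j_2\in F_j^c$, so $j_2\notin F_j$. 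The paper's proof accordingly sets $F'^c=\{\al_{j_0}\}\sqcup\{i_1,j_2\}$ (or a variant using $i_2$), so that the vertex added to the complement---hence removed from the facet---is $i_1\in F_j\setminus F_i$. The case analysis then verifies that $W_n[\{\al_{j_0},i_1,j_2\}]$ is disconnected, using the position of $i_1$ relative to $\al_{j_0}$ and $j_1,j_2$; ``sliding $j_2$ down by one'' plays no role.

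\textbf{Error 2: misreading of \Cref{def:prec}\ref{def 3}.} You claim that if the constructed $F$ lands in $\cD\cap\T_{j_0}$ then automatically $F\prec F_j$. Condition \ref{def 3} says $F\prec F'$ when $F\notin\cD$, $F'\in\cD$, and $s\le t$. With $F\in\cD$, $F_j\notin\cD$, and $s=t=j_0$, this gives $F_j\prec F$, the \emph{opposite} of what you need (and \ref{def 2} requires $s<t$, which also fails). This is precisely why the paper must, in every sub-case, explicitly verify that the new facet $F'$ is \emph{not} in $\cD$; your ``convenient observation'' would remove exactly the step that makes the proof work.

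A minor additional point: the order $<_{\T_{j_0}}$ is defined via the integer order $<$, not via $<_{\cO}$, so $F_i<_{\T_{j_0}}F_j$ gives $i_1<j_1$ as integers (or $i_1=j_1$ and $i_2<j_2$). The paper uses this to get $i_1<j_1<j_2$ and then splits into three cases according to whether $j_1,j_2$ lie below, above, or straddle $\al_{j_0}$.
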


\begin{proof}
 Since $F_i\in\T_{j_0}$, we have $\al_i=\al_j$, and thus ${F_i}^c = \{\al_j\}\sqcup\{i_1,i_2\}$
    and ${F_j}^c = \{\al_j\}\sqcup\{j_1,j_2\}$.  Therefore, $\al_j\os i_1,i_2,j_1,j_2$ by \Cref{remark:Aj}. If $\{i_1, i_2\} \cap \{j_1,j_2\} \neq \emptyset$,  then \eqref{eqn:super} is satisfied by taking $F_r=F_i$. So, we assume that $\{i_1, i_2\} \cap \{j_1,j_2\} = \emptyset$.
    
    Since $F_i \prec F_j$, $F_j \notin \cD$ and $F_i, F_j \in \T_{j_0}$, using \Cref{def:prec}, we get $F_i \notin \cD$ and $F_i \ll F_j$. Therefore $F_i, F_j \in \T_{j_0}$ implies that either $i_1 < j_1$, or $i_1 = j_1$ and $i_2 < j_2$  by \Cref{definition:order_ll} \ref{s=t}. Since $i_1, i_2, j_1$, and $j_2$ are distinct, we get $i_1 < j_1 < j_2$. Moreover, $i_1 < i_2$. By \Cref{remark:range of al_s}, we have $2\leq\al_j\leq n-2$.     

     We now consider the following cases: (I) $j_1, j_2 < \al_j$, (II) $j_1, j_2 >\al_j$ and  (III) $j_1 < \al_j$ and $j_2 >\al_j$.

    \begin{enumerate}[label=(\MakeUppercase{\roman*})]
    \item $j_1, j_2 < \al_j$.
    
    Let ${F'}^c :=\{\al_j\}\sqcup\{i_1,j_2\}$. 
   Since $i_1<j_1<j_2<\al_j$, we have $i_1<\al_j-2$.  Suppose $i_1 \sim \alpha_j$.  Then  $\al_j\leq n-2$ implies that $i_1=0$ and $\al_j=n-2$.  Since  $C_n^2[{F_i}^c]$ is disconnected, we get $i_2\neq n-1$. Therefore, $i_1,i_2,j_1,j_2<\al_j$.  We have $\al_j=n-2>m$ (as $n\geq 9$) and $\al_j\os i_1,i_2,j_1,j_2$. Thus, by \Cref{remark:order in S} \ref{part:ii}, $ i_1,i_2,j_1,j_2 < 2m-\al_j = 2m-n+2\leq 3$, a contradiction as $i_1,i_2,j_1,j_2$ are distinct. Hence $i_1\nsim\al_j.$  Since $C_n^2[{F_j}^c]$ is disconnected, we have $j_2\nsim\al_j$ or $j_2\nsim j_1$. If $j_2\nsim \al_j$, then $\al_j$ is an isolated vertex in $C_n^2[F'^c]$; and if $j_2\nsim j_1$, then $j_2\nsim i_1$, which implies that $i_1$ is an isolated vertex in $C_n^2[F'^c]$. This means that $C_n^2[{F'}^c]$ is disconnected, and hence $F'\in M(\D{3}{C_n^2})$. 
  
    Since $i_1 < j_1 < j_2 <\alpha_j$, we observe that  $F'$ does not satisfy any of the conditions \ref{d1}-\ref{d4}, and therefore $F' \not\in \cD$.    We have $F'\in\T_{j_0}$. So, $i_1<j_1$ implies that  $F'\ll F_j$  by \Cref{definition:order_ll} \ref{s=t}. Hence $F' \prec F_j$  by \Cref{def:prec} \ref{def 1}, and thus, $F_r=F'$ satisfies \eqref{eqn:super} by \Cref{proposition:hash}. 

    \item  $j_1, j_2 > \al_j$.
    
    Since $C_n^2[{F_j}^c]$ is disconnected, we get $j_2 > \al_j+ 3$. Thus, $\al_j\geq 2$ implies that $j_2 \nsim \al_j$. Further, $F_i^c = \{\al_j\} \sqcup \{i_1, i_2\}$. So we have either $i_1 < \al_j$, or $i_1 > \al_j$. 

          \begin{enumerate}
         \item  $i_1 < \al_j$. 
    
         Let $F'^c:=\{\al_j\}\sqcup\{i_1,j_2\}$ and $F''^c:=\{\al_j\}\sqcup\{i_1,j_1\}$. We show that if $F'\in M(\D{3}{C_n^2})$, then $F_r=F'$ satisfies \eqref{eqn:super}, otherwise $F_r=F''$ satisfies \eqref{eqn:super}.  First, assume that  $F'\in M(\D{3}{C_n^2})$.  Using the facts that $i_1<\al_j$ and $j_2>\al_j+3$, we see that $F'\notin\cD$. Also, $i_1<j_1$ implies that $F'\ll F_j$ by \Cref{definition:order_ll} \ref{s=t}. Therefore, $F'\prec F_j$ by \Cref{def:prec} \ref{def 1}.  Hence  $F_r = F'$ satisfies \eqref{eqn:super} by \Cref{proposition:hash}. 

         We now assume that  $F'\notin M(\D{3}{C_n^2})$ \textit{i.e.}, $C_n^2[F'^c]$ is connected. Since $j_2\nsim\al_j$,  we get $\al_j\sim i_1$ and $i_1\sim j_2.$ Thus, $j_2>j_1>\al_j>i_1$ implies that $i_1\geq\al_j-2$. We know that $i_1<i_2$ and $C_n^2[F_i^c]$ is disconnected. So $i_2>\al_j$. Since $i_1, i_2, j_1$ and $j_2$ are distinct, it follows that  $i_2\leq n-3$ or $j_1\leq n-3$. 
       
        We have  $i_2,j_1>\al_j$ and $\al_j\os i_2,j_1$.  If $\al_j<m$, then using \Cref{remark:order in S} \ref{part:i}, $i_2, j_1\geq 2m-\al_j$; and if  $\al_j>m$, then $i_2,j_1>\al_j>2m-\al_j$. Hence we conclude that $i_2, j_1\geq 2m-\al_j$. This means that $n-3\geq2m-\al_j\geq n-\al_j$.   Hence $\al_j\geq 3$.  Now  $i_1\geq\al_j-2$ and $i_1 \sim j_2$ imply that $\al_j=3$, $i_1=1$ and $j_2=n-1.$ 
       If $2m=n+1,$ then $2m-\al_j=n-2>n-3,$ a contradiction. So $2m=n.$ 
    Then $2m-\al_j=n-3$, and this implies that $j_1\in\{n-3,n-2\}$. Hence $j_1\geq 6$ (as $n\geq 9$). Therefore, since $i_1=1$ and $\al_j=3$, it follows  that $j_1\nsim i_1$ and $j_1\nsim\al_j$. Clearly, $C_n^2[F''^c]$ is disconnected \textit{i.e.}, $F''\in M(\D{3}{C_n^2})$.
        Now, using the facts that $i_1=\al_j-2$ and  $j_1\geq \al_j+3$, we have $F'' \notin \cD$. 
      Also $i_1<j_1$ implies that $F''\ll F_j$  by \Cref{definition:order_ll} \ref{s=t}. Thus $F'' \prec F_j$  by \Cref{def:prec} \ref{def 1}. Hence $F_r=F''$ satisfies \eqref{eqn:super} by \Cref{proposition:hash}.
         
    \item $i_1 > \al_j$. 

          Let ${F'}^c :=\{\al_j\}\sqcup \{i_1,j_2\}$.  
          Suppose $C_n^2[{F'}^c]$  is connected. Then $ 2\leq \al_j< i_1 < j_1 < j_2$ implies that $i_1 \leq \al_j +2$, $j_1 = i_1+1$ and $j_2 = i_1+2$.     
         If $i_1 = \al_j + 1$, then $j_1 = \al_j + 2$ and $j_2 = \al_j + 3$, which implies that $C_n^2[{F_j}^c]$ is connected, a contradiction.
         So $i_1 = \al_j + 2$. This means that  $j_1 = \al_j + 3$ and $j_2 = \al_j + 4$. Since $i_1<i_2$ and $i_1,i_2,j_1,j_2$ are distinct, $\al_j\leq n-6$. Further,  by \Cref{remark:order in S} \ref{part:i} and \ref{part:ii}, $i_1>\al_j$ and $\al_j\os i_1$ implies that $i_1\geq 2m-\al_j$ \textit{i.e.}, $\al_j+2\geq2m-\al_j$. Hence, $\al_j\geq m-1$. This implies that $F_j$ satisfies \ref{d4}, which contradicts our assumption that $F_j\notin\cD$. Therefore $C_n^2[{F'}^c]$ is disconnected  and hence $F'\in M(\D{3}{C_n^2})$.

        Since $\al_j<i_1 < j_1 < j_2$, we have $F' \notin \cD$. Further, $i_1<j_1$ implies that $F'\ll F_j$  by \Cref{definition:order_ll} \ref{s=t}, and thus $F' \prec F_j$  by \Cref{def:prec} \ref{def 1}. Therefore $F_r=F'$ satisfies \eqref{eqn:super}.
    
     \end{enumerate}

 \item  $j_1 < \al_j$ and $j_2 > \al_j$.  
 
  Let ${F'}^c :=\{\al_j\}\sqcup\{i_1,j_2\}$, ${F''}^c :=\{\al_j\}\sqcup\{i_1,j_1\}$ and $F'''^c:=\{\al_j\}\sqcup\{j_1, i_2\}$. 
  
  First, let $F''\in M(\D{3}{C_n^2})$. Since $i_1<j_1$, we have $F''\ll F$  by \Cref{definition:order_ll} \ref{s=t}. Therefore, if $F''\notin\cD$, then  $F'' \prec F_j$ by \Cref{def:prec} \ref{def 1}. Hence, by taking $F_r=F''$, \eqref{eqn:super} is satisfied.

Suppose $F''\in\cD$. Since $i_1<j_1<\al_j$, it follows that $F''$ satisfies \ref{d3}. This means that $i_1=\al_j-4$, $j_1=\al_j-3$, and $\al_j=m+1$. Note that $n\geq 9$ implies $m\geq 5$. So $\al_j\geq 6$. Now,  $i_1=\al_j-4\geq 2$  and $j_2>\al_j$  implies that  $i_1\nsim\al_j$ and $i_1\nsim j_2$.
 Hence   $F'\in M(\D{3}{C_n^2})$  and $F'\notin\cD$. Also, since $i_1<j_1$ implies that $F'\ll F_j$, we get $F' \prec F_j$ by \Cref{def:prec} \ref{def 1}. Therefore,  taking $F_r=F'$, \eqref{eqn:super} is satisfied by \Cref{proposition:hash}.

Now, let  $F''\notin M(\D{3}{C_n^2})$, {\it i.e.,} $C_n^2[F''^c]$ is connected. We have $i_1<j_1<\al_j$ and $2\leq\al_j\leq n-2$. Suppose $j_1\nsim i_1$ or $j_1\nsim\al_j$. Then $i_1\sim\al_j$ such that $i_1=0$ and $\al_j=n-2$. Therefore, $\al_j\os i_2$ implies that  $i_2\in\{1,2,n-1\}$  by definition of $\cO$. We get a contradiction to the fact that $C_n^2[F_i^c]$ is disconnected.
 
Thus, $j_1\sim i_1$ and $j_1\sim\al_j$. Now, since $C_n^2[F_j^c]$ is disconnected,  $j_2\nsim\al_j$, which implies that $j_2\geq\al_j+3$.  
 We have ${F'}^c =\{\al_j\}\sqcup\{i_1,j_2\}$. Let  $F'\in M(\D{3}{C_n^2})$. Since $i_1<j_1<\al_j$ and $j_2 \geq \al_j+3$, $F'\notin\cD$. Also, $i_1<j_1$ implies that $F'\ll F_j$, and thus $F'\prec F_j$ by \Cref{def:prec} \ref{def 1}. Hence,  by taking $F_r=F'$, \eqref{eqn:super} is satisfied.
 
We now assume that $F'\notin M(\D{3}{C_n^2})$, \textit{i.e.}, $C_n^2[F'^c]$ is connected.  Since $j_2\nsim\al_j$, we get $j_2\sim i_1$ and $i_1\sim\al_j$. Then $i_1<j_1<\al_j<j_2$ implies that $i_1=\al_j-2$ and $j_1=\al_j-1$.  
Also, $j_2\sim i_1$ implies that  either $j_2=n-2$ and $i_1=0$, or $j_2=n-1$ and $i_1\in\{0,1\}$. If $i_1=0$, then $\al_j=2$, and thus, $\al_j\os i_2$ implies that  $i_2\in\{1,n-2,n-1\}$  by definition of $\cO$. This means that $C_n^2[F_i^c]$ is connected, a contradiction. 
So,  $i_1=1$, $j_1=2$, $\al_j=3$ and  $j_2=n-1$. Further, $C_n^2[F_i^c]$ is disconnected  implies that $i_2\nsim\al_j$, and since $i_2\neq j_2$,  $5<i_2<n-1=j_2$.
It follows that $i_2\nsim j_1$. We have ${F'''}^c =\{\al_j\}\sqcup\{j_1,i_2\}$.  Clearly, $C_n^2[F'''^c]\in M(\D{3}{C_n^2})$. Since $j_1=\al_j-1$, $i_2\geq\al_j+3$ and $\al_j=3\leq m-2$ (as $m\geq 5$), we have $F'''\notin\cD$.
Also, $i_2<j_2$ implies that $F'''\ll F_j$, and thus $F'''\prec F_j$ by \Cref{def:prec} \ref{def 1}. Hence,  $F_r=F'''$ satisfies \eqref{eqn:super}.

\end{enumerate}

\end{proof}

\begin{lemma}\label{case:B}
Suppose that $F_j\notin\cD$. If  $F_i\notin\T_{j_0}$ $(\textit{i.e.,}~ i_0\neq j_0)$, then there exists an $F_r\in M(\Delta_3(C_n^2))$ that satisfies \eqref{eqn:super} for the pair $F_i\prec F_j$.
\end{lemma}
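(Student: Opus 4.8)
The plan is to reduce \Cref{case:B} to a single combinatorial assertion about $3$-vertex induced subgraphs of $W_n$, and then to dispose of a few exceptional configurations using the hypothesis $F_j\notin\cD$.

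\textbf{Step 1 (setup).} Since $F_j\notin\cD$ and $F_i\prec F_j$ with $i_0\neq j_0$, only cases \ref{def 1}--\ref{def 2} of \Cref{def:prec} can hold for $(F_i,F_j)$, and each of them gives $i_0<j_0$; hence $\al_i\os\al_j$. As $\os$ is a total order and $\al_j\os j_1,j_2$ by \Cref{remark:Aj}, we get $\al_i\notin\{j_1,j_2\}$, and clearly $\al_i\neq\al_j$, so $\al_i\notin F_j^c$; thus $\al_i\in F_j\setminus F_i$. For $\nu\in F_j^c$ put $G_\nu^c:=(F_j^c\setminus\{\nu\})\sqcup\{\al_i\}$. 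Because $\al_i\os\al_j\os j_1,j_2$, the vertex $\al_i$ is $\os$-minimal in $G_\nu^c$, so $G_\nu\in\T_{i_0}$, and $\T_{i_0}\neq\T_{j_0}$ since $i_0\neq j_0$. Consequently, the moment we know that $W_n[G_\nu^c]$ is disconnected for some $\nu$, \Cref{proposition:precedes} (applied with $\lambda=\al_i\os\al_j$) gives $G_\nu\prec F_j$, and \Cref{proposition:hash} (with $\lambda=\al_i\in F_j\setminus F_i$) shows that $F_r:=G_\nu$ satisfies \eqref{eqn:super}. So everything reduces to the claim: \emph{some $\nu\in F_j^c$ makes $W_n[(F_j^c\setminus\{\nu\})\cup\{\al_i\}]$ disconnected.}

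\textbf{Step 2 (counting claim).} Suppose not, i.e.\ $W_n[G_\nu^c]$ is connected for all three $\nu\in\{\al_j,j_1,j_2\}$. A graph on $3$ vertices is connected iff it spans at least $2$ edges, so adding the edge counts of $W_n[G_{\al_j}^c]$, $W_n[G_{j_1}^c]$, $W_n[G_{j_2}^c]$ gives $e\bigl(W_n[F_j^c]\bigr)+2\cdot\#\{x\in F_j^c:\al_i\sim x\}\geq 6$. Since $F_j$ is a facet, $W_n[F_j^c]$ is disconnected, hence spans at most one edge; therefore $\al_i$ is adjacent in $W_n$ to every vertex of $F_j^c$. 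Now $\deg_{W_n}(\al_i)=4$, and (using $n\geq 9$) $W_n$ restricted to $N(\al_i)=\{\al_i-2,\al_i-1,\al_i+1,\al_i+2\}$ is the path $(\al_i-2)-(\al_i-1)-(\al_i+1)-(\al_i+2)$. As $W_n[F_j^c]$ has at most one edge, $F_j^c$ must be one of the only two $3$-subsets of this path spanning at most one edge:
$$
F_j^c=\{\al_i-2,\,\al_i-1,\,\al_i+2\}\qquad\text{or}\qquad F_j^c=\{\al_i-2,\,\al_i+1,\,\al_i+2\}.
$$

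\textbf{Step 3 (ruling out the exceptions).} In either case $\al_i$ is $\os$-minimal in $F_j^c\cup\{\al_i\}$. Feeding this into \Cref{remark:order in S} together with the explicit initial segment $\al_1=m$, $\al_2=m-1$, $\al_3=m+1$, $\al_4=m-2$, $\al_5=m+2$, $\al_6=m-3$ of $\cO$ forces $\al_i\in\{m-1,m\}$; going through the few possibilities, the first shape yields either $F_j^c=\{m-2,m-1,m+2\}$ with $\al_j=m-1$ (if $\al_i=m$) or $F_j^c=\{m-3,m-2,m+1\}$ with $\al_j=m+1$ (if $\al_i=m-1$), while the second shape yields only $F_j^c=\{m-2,m+1,m+2\}$ with $\al_j=m+1$ (forcing $\al_i=m$). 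But these three facets are precisely those singled out by conditions \ref{d2}, \ref{d3}, and \ref{d1} defining $\cD$; hence $F_j\in\cD$, contradicting the hypothesis. Therefore some $\nu$ works and $F_r=G_\nu$ is the required facet.

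The conceptual core is the counting claim of Step~2, which is short once stated correctly, and Step~1 is routine given \Cref{proposition:precedes} and \Cref{proposition:hash}. The main obstacle — and the bulk of the write-up — is Step~3: the finite but delicate case analysis with the order $\os$ around $m$ (including the handful of positions of $\al_i$ near $0$, where $\al_i\pm 2$ wrap modulo $n$, which are eliminated as soon as one sees $\al_i\in\{m-1,m\}$) and the exact matching of each surviving $F_j^c$ with one of the defining conditions \ref{d1}--\ref{d4} of $\cD$.
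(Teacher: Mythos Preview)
Your proof is correct and takes a genuinely different, much shorter route than the paper. The paper proves \Cref{case:B} by an extended case analysis: it first sets $F_q^c=\{\al_i,j_1,j_2\}$ and then, splitting according to whether $\al_j<m$ or $\al_j>m$, the relative position of $\al_i$, and the positions of $j_1,j_2$ with respect to $\al_j$, it checks in each subcase that either $F_q$ is a facet or one of the two ``repaired'' sets $\{\al_i,\al_j,j_2\}$, $\{\al_i,\al_j,j_1\}$ is. Your approach unifies all three replacements as $G_\nu$ and replaces the entire case analysis by the single double-counting identity $\sum_\nu e(W_n[G_\nu^c])=e(W_n[F_j^c])+2\cdot|\{x\in F_j^c:\al_i\sim x\}|$, which forces $F_j^c\subseteq N(\al_i)$ whenever no $G_\nu$ works; the path structure on $N(\al_i)$ then leaves exactly the three configurations $(\cD_1)$, $(\cD_2)$, $(\cD_3)$. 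This is a real simplification, and it also explains \emph{why} precisely those three members of $\cD$ had to be displaced in the order $\prec$: they are exactly the facets $F_j$ for which the natural candidate $\al_i\in F_j\setminus F_i$ with $\al_i\os\al_j$ fails to disconnect any $G_\nu^c$.

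Two small points you might tighten. First, the reduction ``$\al_i\os\al_i\pm2$ forces $\al_i\in\{m-1,m\}$'' deserves one explicit line: by \Cref{remark:range of al_s} you have $2\le\al_i\le n-2$, so there is no wraparound on $\al_i-2$, and then \Cref{remark:order in S}\ref{part:i}--\ref{part:ii} give $\al_i\ge m-1$ from $\al_i\os\al_i+2$ and $\al_i\le m$ from $\al_i\os\al_i-2$ (the single potential wrap $\al_i=n-2$ is already excluded by the latter). Second, in Step~1 your appeal to \Cref{def:prec} is slightly informal (the definition is phrased as a dichotomy rather than four exhaustive cases), but the conclusion $i_0<j_0$ is immediate once one checks the two possibilities $F_i\in\cD$ and $F_i\notin\cD$ separately, exactly as the paper does in one line.
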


\begin{proof}

We have ${F_i}^c = \{\al_i\}\sqcup\{i_1,i_2\}$ and ${F_j}^c = \{\al_j\}\sqcup\{j_1,j_2\}$  such that $F_i\prec F_j$. Then $F_j\notin\cD$ and $F_i\notin\T_{j_0}$ implies that $\al_i\os \al_j$ by \Cref{definition:order_ll} \ref{s<t} and \Cref{def:prec}. Therefore, since $\al_j\os j_1,j_2$ by \Cref{remark:Aj}, we get $\al_i\os j_1,j_2$.

Let ${F_q}^c=\{\al_i\}\sqcup\{j_1, j_2\}$. Observe that if $F_q\in M(\D{3}{C_n^2})$, then  $F_q\prec F_j$ by \Cref{proposition:precedes}, and hence $F_r=F_q$ satisfies \eqref{eqn:super} by \Cref{proposition:hash}. 
    
    Clearly,  $\al_i\os \al_j$ implies that  $\al_j\neq m$. Thus, we have two cases: (A)  $\al_j<m$, and (B) $\al_j>m$.\\
      
    \noindent{\bf Case A:}   $\al_j<m$.

 Since $\al_i\os \al_j$, we get $\al_j<\al_i<2m-\al_j$ by \Cref{remark:order in S} \ref{part:iii}.  We have $2m-\al_j>m$. It follows that either $\al_j<\al_i<m$, or $m\leq \al_i<2m-\al_j$. \\

    \noindent{\bf Subcase A.1:}  $\al_j<\al_i<m$.

   In this case, we show that $F_q\in M(\Delta_3(C_n^2))$, and thus $F_r=F_q$ satisfies \eqref{eqn:super}. 
   
      \begin{enumerate}[label=(\MakeUppercase{\roman*})]
    
        \item $j_1,j_2 < \al_j$.

       We have $j_1<j_2<\al_j<\al_i<m<n-1$. Thus $j_1\nsim\al_i$. Also, since $C_n^2[{F_j}^c]$ is disconnected,  $j_1\nsim j_2$ or $j_2\nsim\al_j$. If $j_1\nsim j_2$, then $j_1$ is an isolated vertex in $C_n^2[F_q^c]$; and if $j_2\nsim\al_j$, then $j_2\nsim\al_i$, which implies that $\al_i$ is an isolated vertex in $C_n^2[F_q^c]$. It follows that $F_q\in M(\Delta_3(C_n^2))$.

        \item $j_1,j_2 > \al_j$.

         Since $\al_j<\al_i<m$, $\al_j\leq m-2$. Further, $\al_j\os j_1$ and  $j_1>\al_j$ implies  that $j_1\geq  m+2$ by definition of $\cO$. We have $\al_j<\al_i<m<m+2\leq j_1<j_2$. Thus  $j_1\nsim\al_i$. Since $C_n^2[{F_j}^c]$ is disconnected,  $j_1\nsim j_2$ or $j_2\nsim\al_j$. If $j_1\nsim j_2$, then $j_1$ is an  isolated vertex in $C_n^2[F_q^c]$; and if $j_2\nsim\al_j$, then $j_2\nsim\al_i$, which implies that $\alpha_i$  is an  isolated vertex in $C_n^2[F_q^c]$. Hence $F_q\in M(\Delta_3(C_n^2))$.

        \item $j_1<\al_j$ and $j_2>\al_j$.       

        Since $\al_j<\al_i<m$,  $\al_j\os j_2$ and  $j_2>\al_j$, we get $j_2\geq  m+2$. We have $j_1<\al_j<\al_i<m<m+2\leq j_2$. Thus $j_2\nsim\al_i$. Since $C_n^2[{F_j}^c]$ is disconnected,  $j_1\nsim j_2$ or $j_1\nsim\al_j$.  Hence $F_q\in M(\Delta_3(C_n^2))$ (as $j_1\nsim\al_j$ implies that $j_1\nsim\al_i$).

    \end{enumerate}

    \noindent{\bf Subcase A.2:}   $m\leq \al_i<2m-\al_j$.

   In this case,  we show that either $F_q\in M(\D{3}{C_n^2})$ and thus $F_r=F_q$ satisfies \eqref{eqn:super}, or $F_q\notin M(\D{3}{C_n^2})$ and there exists some $F_r\neq F_q$ that satisfies \eqref{eqn:super} for the pair $F_i \prec F_j$. 

    \begin{enumerate}[label=(\MakeUppercase{\roman*})]
    
        \item $j_1, j_2 <\al_j$.
        
     Observe that since $j_1,j_2<\al_j$, if $\al_j<4$, then $C_n^2[F_j^c]$ is connected. So $\al_j \geq 4$, and hence $\al_i<2m-\al_j\leq 2m-4\leq n-3$. We have $j_1<j_2<\al_j<m\leq\al_i<n-3$. Thus $j_1\nsim\al_i$. Since $C_n^2[F_j^c]$ is disconnected, $j_1\nsim j_2$ or $j_2\nsim\al_j$.  If $j_1\nsim j_2$, then $j_1$ is an  isolated vertex in $C_n^2[F_q^c]$; and if $j_2\nsim\al_j$, then $j_2\nsim\al_i$, which implies that $\alpha_i$  is an  isolated vertex in $C_n^2[F_q^c]$. Hence $F_q\in M(\D{3}{C_n^2})$.

        \item  $j_1,j_2 > \al_j$.
        
          Using the facts that $j_1,j_2>\al_j$,  $\al_j<m$, and $\al_j\os j_1,j_2$, it follows that $j_1,j_2\geq 2m-\al_j>\al_i$ by \Cref{remark:order in S} \ref{part:i}. Thus $\al_j<m\leq\al_i<j_1<j_2$.
        
          Observe that if $j_1\nsim\al_i$ or $j_1\nsim j_2$, then  $\al_i\geq m$ implies that $j_2\nsim \al_i$, and hence $F_q\in M(\D{3}{C_n^2})$. So assume that $j_1\sim\al_i$ and $j_1\sim j_2$. Then $F_q\notin M(\D{3}{C_n^2})$. Let $F'^c:=\{\al_i\}\sqcup\{\al_j,j_2\}$.  We first show that $F'\in M(\D{3}{C_n^2})$. 
        
        Since $C_n^2[F_j^c]$ is disconnected, $j_1\sim j_2$ implies that  $j_1\nsim\al_j$ and $j_2\nsim\al_j$. Therefore, if $\al_i\nsim\al_j$ or $\al_i\nsim j_2$, then $F'\in M(\D{3}{C_n^2})$. Let $\al_i\sim\al_j$.
        We need to show that $\al_i\nsim j_2$. 
        Since $\al_j<m$ and $\al_i\geq m$, $\al_i\sim\al_j$ implies that $\al_j\in\{m-2, m-1\}$. If $\al_j=m-2$, then  $\al_i=m$ and $j_2>j_1\geq m+2$ (as $\al_j\os j_1$ and $j_1>\al_j$). Thus $\al_i\nsim j_2$.
       
        If $\al_j=m-1$, then  $\al_i<_{\cO}\al_j$ implies that $\al_i=m$. 
        Further, we have $j_1\sim\al_i$, $j_1\nsim\al_j$, and $j_1 > \al_i=m$. Thus $j_1=m+2$,  and hence $j_2>m+2$. This means that $\al_i\nsim j_2$.

       Hence  $F'\in M(\D{3}{C_n^2})$.   
   Since $\al_i\os\al_j$, from \Cref{proposition:precedes}, we see that $F'\prec F_j$. Thus $F_r=F'$ satisfies \eqref{eqn:super} by \Cref{proposition:hash}.

        \item $j_1 < \al_j$  and $j_2 > \al_j$.
       
         We have $j_2>\al_j$, $\al_j<m$, and $\al_j\os j_2$. Thus, we get $j_2\geq 2m-\al_j>\al_i$ by \Cref{remark:order in S} \ref{part:i}. So,  $j_1<\al_j<m\leq\al_i<j_2$.
        
        Suppose $j_2\nsim\al_i$. Now, since $C_n^2[{F_j}^c]$ is disconnected, we have $j_1\nsim j_2$ or $j_1\nsim\al_j$. If $j_1\nsim j_2$, then $j_2$ is an  isolated vertex in $C_n^2[F_q^c]$; and if $j_1\nsim\al_j$, then $j_1\nsim\al_i$, which implies that $\alpha_i$  is an  isolated vertex in $C_n^2[F_q^c]$. Thus, we see that  $F_q\in M(\D{3}{C_n^2})$. 
       
        Now, we assume that $j_2\sim\al_i$. 
       Suppose $j_1\sim\al_i$. Then, $j_1<\al_j<m\leq\al_i<j_2$ implies that either $j_1=m-2$, $\al_j=m-1$, $\al_i=m$ and $j_2\in\{m+1,m+2\}$, or $j_1=0$, $\al_i=n-2$, and $j_2=n-1$. In the former case, since $C_n^2[{F_j}^c]$ is disconnected, $j_2=m+2=\al_j+3$. This means that $F_j$ satisfies \ref{d2}, a contradiction as $F_j\notin\cD$. Hence, we have $j_1=0$, $\al_i=n-2$,  and $j_2=n-1$. Now, $\al_i\os\al_j$ and $\al_j<m$ implies that $\al_j\leq 2$. It follows that $C_n^2[{F_j}^c]$ is connected, again a contradiction. Therefore, $j_1\nsim\al_i$. Now, if $j_1\nsim j_2$, then $F_q\in M(\D{3}{C_n^2})$. 
        
        So, let $j_1\sim j_2$. Then $F_q\notin M(\D{3}{C_n^2})$. Let $F'^c:=\{\al_i\}\sqcup\{j_1,\al_j\}$.  Since $C_n^2[F_j^c]$ is disconnected and $j_1\sim j_2$,  we get $j_1\nsim\al_j$. Therefore, $j_1\nsim\al_i$ implies that $F'\in M(\Delta_3(C_n^2))$. We have $\al_i\os\al_j$. Thus $F'\prec F_j$ by \Cref{proposition:precedes}. Hence $F_r=F'$ satisfies \eqref{eqn:super} by \Cref{proposition:hash}.        

    \end{enumerate}

  \noindent{\bf Case B:} 
     $\al_j>m$.

     We have ${F_q}^c=\{\al_i\}\sqcup\{j_1, j_2\}$. Recall that, if $F_q\in M(\D{3}{C_n^2})$, then $F_r=F_q$ satisfies \eqref{eqn:super}.
      Since $\al_i\os \al_j$, $2m-\al_j\leq\al_i<\al_j$ by \Cref{remark:order in S} \ref{part:iv}.  Further, $2m-\al_j<m$. This means  that either $2m-\al_j\leq \al_i\leq m$, or $m< \al_i<\al_j$. \\

    \noindent{\bf Subcase B.1:} 
    $2m-\al_j\leq \al_i\leq m$.

     In this case also, we show that either $F_q\in M(\D{3}{C_n^2})$, or $F_q\notin M(\D{3}{C_n^2})$ and we find some $F_r\neq F_q$ that satisfies \eqref{eqn:super}.

    \begin{enumerate}[label=(\MakeUppercase{\roman*})]
    
        \item $j_1, j_2 <\al_j$. 
         
        Since $\al_j\os j_1,j_2$ and $\al_j>m$,  it follows from \Cref{remark:order in S} \ref{part:ii} that $j_1, j_2 < 2m-\al_j\leq \al_i$. So, we have $j_1<j_2<\al_i\leq m<\al_j$.
        
          Observe that if $j_1\nsim j_2$ or $j_2\nsim\al_i$, then  $\al_i\leq m$ implies that $j_1\nsim \al_i$, and hence $F_q\in M(\D{3}{C_n^2})$. So assume that $j_1\sim j_2$ and $j_2\sim \al_i$. Then $F_q\notin M(\D{3}{C_n^2})$.  Let ${F'}^c:=\{\al_i\}\sqcup\{j_1, \al_j\}$. We first show that $F'\in M(\D{3}{C_n^2})$.
       
     Since $C_n^2[{F_j}^c]$ is disconnected, $j_1 \sim j_2$ implies that $j_1 \not\sim \alpha_j$. Therefore, if $\al_i\nsim\al_j$ or $\al_i\nsim j_1$, then $F'\in M(\D{3}{C_n^2})$. 
       Let $\al_i\sim\al_j$. We show that $\al_i\nsim j_1$. 
    Suppose $\al_i\sim j_1$.
         Since $\al_i\leq m$ and $\al_j>m$, $\al_i\sim\al_j$ implies that  $\al_j\in\{m+1, m+2\}$. 
          If $\al_j=m+1$, then  $\al_i\in\{m-1,m\}$. Using the facts that $\al_j\os j_2$ and $j_2<\al_j$, it follows that $j_2\leq m-2$. Hence $j_1<m-2$. Then $\al_i=m-1$, $j_1=m-3=\al_j-4$ and $j_2=m-2=\al_j-3$. This means that $F_j$ satisfies \ref{d3}, a contradiction.

        If $\al_j=m+2$, then  $\al_i=m$ and $j_1<j_2\leq m-3$ (as $\al_j\os j_2$ and $j_2<\al_j$). Thus $\al_i\nsim j_1$,  which contradicts our assumption that $\al_i\sim j_1$. Since we get a contradiction in each case, our assumption that $\al_i\sim j_1$ is false. Therefore, $\al_i\nsim j_1$. Hence  $F'\in M(\D{3}{C_n^2})$. 
    
        Since $\al_i\os\al_j$, $F\prec F_j$ by \Cref{proposition:precedes}. Thus $F_r=F'$ satisfies \eqref{eqn:super}.  

        \item  $j_1,j_2 > \al_j$.

If $\alpha_j \geq n-4$, then $j_1, j_2 > \alpha_j$ implies $C_n^2[F_j^c]$ is connected. So $\al_j \leq n-5$, and hence $\al_i\geq 2m-\al_j\geq 5$. We have $j_2>j_1>\al_j>m\geq\al_i\geq 5$. Thus $j_2\nsim\al_i$. Since $C_n^2[{F_j}^c]$ is disconnected, $j_1\nsim j_2$ or $j_1\nsim\al_j$. Hence   $F_q\in M(\D{3}{C_n^2})$  (as $j_1\nsim\al_j$ implies that $j_1\nsim\al_i$).

      \item $j_1 < \al_j$  and $j_2 > \al_j$.

         Since $\al_j\os j_1$ and $\al_j>m$, we get $j_1<2m-\al_j\leq \al_i$ by \Cref{remark:order in S} \ref{part:ii}. So, we have $j_1<\al_i\leq m<\al_j<j_2$.

          Since $C_n^2[{F_j}^c]$ is disconnected, $j_1\nsim j_2$ or $j_2\nsim\al_j$. Now, if $j_1\nsim\al_i$, then  $F_q\in M(\D{3}{C_n^2})$ (as $j_1\nsim\al_i$ and $j_2\nsim\al_j$ imply that $j_2\nsim\al_i$).

        We now assume that $j_1\sim\al_i$. 
       Suppose $j_2\sim\al_i$.  Then $j_1<\al_i\leq m<\al_j<j_2$ implies that either $\al_i=m$, $\al_j=m+1$, $j_2=m+2$  and $j_1\in\{m-2,m-1\}$, or $\al_i=1$, $j_2=n-1$, and $j_1=0$. In the former case, since $C_n^2[{F_j}^c]$ is disconnected, $j_1=m-2=\al_j-3$. This means that $F_j$ satisfies \ref{d1}, a contradiction.
        So, we have $\al_i=1$, $j_2=n-1$, and $j_1=0$. Then $\al_i\os\al_j$  and $\al_j>m$ implies that $\al_j=n-1=j_2$, which is again a contradiction. Therefore, $j_2\nsim\al_i$. Now, if $j_1\nsim j_2$, then $F_q\in M(\D{3}{C_n^2})$.

        So, let $j_1\sim j_2$. Then $F_q\notin M(\D{3}{C_n^2})$.
         Let $F'^c:=\{\al_i\}\sqcup\{\al_j,j_2\}$. Since $C_n^2[F_j^c]$ is disconnected and $j_1\sim j_2$, we get $j_2\nsim\al_j$. Therefore, $j_2\nsim\al_i$ implies that $F'\in M(\Delta_3(C_n^2))$.   Further, since $\al_i\os\al_j$, we get $F'\prec F_j$ by \Cref{proposition:precedes}. Hence $F_r=F'$ satisfies \eqref{eqn:super} by \Cref{proposition:hash}.
    
    \end{enumerate}

    \noindent{ \bf Subcase B.2:} 
    $m< \al_i<\al_j$.
    
    We show that $F_q\in M(\D{3}{C_n^2})$, and thus $F_r=F_q$ satisfies \eqref{eqn:super}.

    \begin{enumerate}[label=(\MakeUppercase{\roman*})]
    
         \item $j_1,j_2 < \al_j$.
         
        Since $m<\al_i<\al_j$, we have $\al_j\geq m+2$. Then $\al_j\os j_2<\al_j$ implies that $j_2\leq  m-3$. It follows that $j_1<j_2\leq m-3<m<\al_i<\al_j$. Thus $j_2\nsim\al_i$. Further, since $C_n^2[{F_j}^c]$ is disconnected, $j_1\nsim j_2$ or $j_1\nsim\al_j$. This implies that $F_q\in M(\Delta_3(C_n^2))$ (as $j_1\nsim\al_j$ implies that $j_1\nsim\al_i$).

        \item $j_1,j_2 > \al_j$.
        
        We have $j_2>j_1>\al_j>\al_i>m>1$. So $j_2\nsim\al_i$. Since $C_n^2[{F_j}^c]$ is disconnected, $j_1\nsim j_2$ or $j_1\nsim\al_j$. Hence $F_q\in M(\Delta_3(C_n^2))$ (as $j_1\nsim\al_j$ implies that $j_1\nsim\al_i$).

        \item $j_1<\al_j$ and $j_2>\al_j$.
        
        Since $m< \al_i<\al_j$ and  $\al_j\os j_1<\al_j$, we get $j_1\leq  m-3$. Hence $j_1\leq m-3<m<\al_i<\al_j<j_2$. Thus $j_1\nsim\al_i$. Now, since $C_n^2[{F_j}^c]$ is disconnected, we have $j_1\nsim j_2$ or $j_2\nsim\al_j$. This implies that $F_q\in M(\Delta_3(C_n^2))$ (as $j_2\nsim\al_j$ implies that $j_2\nsim\al_i$).
        
    \end{enumerate}

    \end{proof}

In the following Lemma,  we now consider the case when $F_j\in\cD$.

\begin{lemma}\label{case:D}
   Suppose that $F_j\in\cD$. Then there exists an $F_r\in M(\Delta_3(C_n^2))$ that satisfies \eqref{eqn:super} for the pair $F_i\prec F_j$.
\end{lemma}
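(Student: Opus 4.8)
Since $F_j\in\cD$, it satisfies exactly one of \ref{d1}--\ref{d4}, so $F_j^c$ is a completely explicit $3$-set --- depending, in case \ref{d4}, only on the parameter $\al_j$ --- and in every case $\al_{j_0}=\al_j$ is a vertex near $m$ in the ordering $\cO$. By \Cref{proposition:hash}, to verify \eqref{eqn:super} for the pair $F_i\prec F_j$ it suffices to produce $\nu\in F_j^c$ and $\lambda\in F_i^c\cap F_j$ so that the $(n-3)$-set $F$ with $F^c=(\di{F_j^c}{\nu})\sqcup\{\lambda\}$ is a facet of $\D{3}{W_n}$ satisfying $F\prec F_j$. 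Let $R(F_j)\subseteq F_j$ be the set of those $\lambda\in F_j$ for which such a $\nu$ exists; since $F_j^c\subseteq V(W_n)\setminus R(F_j)$, it is enough to prove that $V(W_n)\setminus R(F_j)$ exceeds $F_j^c$ by at most two vertices and that the resulting finitely many exceptional $3$-sets are not of the form $F_i^c$ with $F_i\prec F_j$.

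The placement of $\cD$-facets in $\prec$ is what makes $F\prec F_j$ tractable: by \Cref{def:prec} \ref{def 3} every facet not in $\cD$ that lies in some $\T_t$ with $t\le j_0$ precedes $F_j$, and by \Cref{def:prec} \ref{def 4} every $\cD$-facet that is $\ll$-smaller than $F_j$ precedes it. Consequently, if we keep $\al_j$ in $F^c$ (that is, take $\nu\in\{j_1,j_2\}$), then $F\in\T_t$ for some $t\le j_0$, and $F\prec F_j$ holds automatically unless $F$ lands in $\cD\cap\T_{j_0}$ on a $\ll$-larger facet; since the (at most three) $\cD$-facets in $\T_{j_0}$ are explicitly known in each of \ref{d1}--\ref{d4}, one can pin down exactly when this bad landing occurs, and it turns out that whenever it does for one choice $\nu\in\{j_1,j_2\}$ the other choice still produces a valid $F\prec F_j$ covering the same $\lambda$ (and the swap $\nu=\al_j$, which can drop the level below $j_0$, rescues one further vertex). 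Once $F\prec F_j$ is secured, the only remaining constraint on $\lambda$ is that $W_n[(\di{F_j^c}{\nu})\sqcup\{\lambda\}]$ be disconnected --- a purely local condition excluding only the one or two vertices adjacent in $W_n$ to both surviving vertices of $F_j^c$. A short adjacency computation then gives $R(F_j)\supseteq F_j\setminus E$, where $E=\{\al_j+2\}$ in case \ref{d4}, $E=\{m\}$ in case \ref{d1}, $E=\{m-1\}$ in case \ref{d3}, and $E=\{m,m+1\}$ in case \ref{d2}.

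It remains to show that no $F_i\prec F_j$ has $F_i^c\subseteq F_j^c\cup E$. There are at most ten such $3$-sets (in case \ref{d2}; fewer in the others), and we check each: all but one induce a connected subgraph of $W_n$ --- typically because a vertex of $E$ contained in $F_i^c$, namely $\al_j+2$ in case \ref{d4}, $m$ in cases \ref{d1} and \ref{d2}, or $m-1$ in case \ref{d3}, is adjacent in $W_n$ to every vertex of $F_j^c$ --- so the corresponding $F_i$ is not even a facet. The single exception is $F_i^c=\{m-2,m+1,m+2\}$ in case \ref{d2}: this set does induce a disconnected subgraph, but it coincides with the \ref{d1}-facet, which lies in $\T_3$; being a $\cD$-facet in a strictly later $\T_t$ than $\T_{j_0}=\T_2$, it is $\ll$-larger than $F_j$ and hence does not precede $F_j$. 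Therefore $F_i^c\cap R(F_j)\neq\emptyset$ for every $F_i\prec F_j$, which establishes \eqref{eqn:super} and proves \Cref{case:D}.

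The real work sits in the middle paragraph: for each of \ref{d1}--\ref{d4} one must carry out the explicit adjacency analysis in $W_n$ to determine $E$, and --- because $F_j^c$ generally contains vertices on both sides of $\al_j$ --- keep careful track of the order $<_{\cO}$ (via \Cref{remark:order in S}) in order to decide, for each candidate swap $F$, whether $F\in\cD$ and whether $F\ll F_j$, i.e.\ whether \Cref{def:prec} \ref{def 4} rather than \ref{def 3} governs the comparison. Once these finitely many configurations are handled, \Cref{case:D} follows, and combined with Lemmas \ref{case:A} and \ref{case:B} this proves that $\prec$ is a shelling order for $\D{3}{W_n}$.
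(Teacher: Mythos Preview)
Your strategy is sound and, once the deferred adjacency computation is carried out, yields a correct proof that is essentially the paper's argument repackaged: the paper also splits into the four \ref{d1}--\ref{d4} cases and, inside each, produces $F_r$ by swapping one element of $F_j^c$ for a chosen element of $F_i^c$, checking disconnectedness and the $\prec$-comparison exactly as you describe. Your reformulation via $R(F_j)$ and the exceptional set $E$ is a cleaner way to organize the same case analysis --- it separates the question ``which $\lambda\in F_j$ admit a good swap'' from ``which $F_i$ can occur'' --- and your claimed values of $E$ are correct, including the delicate point that in case \ref{d1} the vertex $m-1$ is only reached by the swap $\nu=\al_j$, and that in case \ref{d2} the sole disconnected $3$-set in $F_j^c\cup E$ other than $F_j^c$ is the \ref{d1}-facet, which indeed lies in $\T_3$ and hence does not precede $F_j\in\T_2$.

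That said, as written your argument is a proof outline rather than a proof: the sentence ``a short adjacency computation then gives $R(F_j)\supseteq F_j\setminus E$'' hides precisely the work the paper does explicitly. For each of \ref{d1}--\ref{d4} you must, for every $\lambda\in F_j\setminus E$, exhibit a $\nu\in F_j^c$ for which $W_n[(\di{F_j^c}{\nu})\sqcup\{\lambda\}]$ is disconnected \emph{and} verify the $\prec$-comparison (in particular, confirm that the resulting $F$ either lies in $\T_t$ with $t<j_0$, or lies in $\T_{j_0}\setminus\cD$, or is a $\cD$-facet in $\T_{j_0}$ with $F\ll F_j$). Your remark that ``bad $\cD$-landings'' can be avoided by switching to the other $\nu$ is in fact vacuously true here: a direct check shows that in every case the swap that achieves disconnectedness already avoids $\cD\cap\T_{j_0}$ above $F_j$, so no rescue is needed on that front. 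Writing out these verifications amounts to exactly the paper's case analysis; nothing new is required, but it cannot be omitted.
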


\begin{proof}
    
We have ${F_i}^c = \{\al_i\}\sqcup\{i_1,i_2\}$,  ${F_j}^c = \{\al_j\}\sqcup\{j_1,j_2\}$  and $F_i\prec F_j$.  If $F_i\in\T_{j_0}$, {\it i.e.,} $i_0 = j_0$, then we have $\al_i=\al_j$. In this case, if $\{i_1, i_2\}\cap\{j_1, j_2\}\neq\emptyset$, then $F_r=F_i$ satisfies \eqref{eqn:super}. So we assume that $\{i_1, i_2\}\cap\{j_1, j_2\}=\emptyset$ whenever $F_i\in\T_{j_0}$.
    Moreover, if  $F_i\notin\T_{j_0}$, then $F_i\prec F_j$  and $F_j\in\cD$ imply that $\al_i\os \al_j$ by  by \Cref{definition:order_ll} \ref{s<t} and \Cref{def:prec}.
    Note that since $n\geq 9$, we have $5\leq m\leq n-4$.
       Now, $F_j\in\cD$ implies that $F_j$ satisfies one of the conditions from \ref{d1} to \ref{d4}. 

    \begin{enumerate}[label=(\arabic*)]

    \item $F_j$ satisfies \ref{d1}. 
    
   We have ${F_j}^c =\{\al_j\}\sqcup\{\al_j-3,\al_j+1\}$ and $\al_j=m+1$. Hence, $6\leq\al_j\leq n-3$.
    
     Let   Let $F'^c:=(F_j^c\setminus\{j_2\})\sqcup\{i_1\}$ and  $F''^c:=(F_j^c\setminus\{j_2\})\sqcup\{i_2\}$.
    First, we show that if $i_1<j_1$, then $F_r=F'$ satisfies \eqref{eqn:super} and if 
$i_2>j_2$, then $F_r=F''$ satisfies \eqref{eqn:super}. Later, we discuss the case when  $i_1\geq j_1$ and $i_2\leq j_2$.

First, suppose $i_1<j_1$.  Since $\al_j\os j_1$, $j_1<\al_j$ and $\al_j>m$, we get $j_1<2m-\al_j$ by \Cref{remark:order in S} \ref{part:ii}, and hence $i_1<2m-\al_j$, thereby implying that $\al_j\os i_1$.  This means that $F'^c=\{\al_j\}\sqcup\{i_1,j_1\}$.  Since $i_1<j_1=\al_j-3<\al_j\leq n-3$, we get $\al_j\nsim i_1$ and $\al_j\nsim j_1$. Hence $F'\in M(\D{3}{C_n^2})$. Also, $i_1<j_1$ implies that $F'\ll F_j$.  Therefore, $F'\prec F_j$ by \Cref{def:prec} \ref{def 3} and \ref{def 4}. Hence $F_r=F'$ satisfies \eqref{eqn:super} by \Cref{proposition:hash}.

Let  $i_2>j_2$.  We have $i_2>j_2>\al_j>m$. Therefore $\al_j\os i_2$, and thus,  $\al_j\os j_1$ and $j_1<\al_j<i_2$ implies that $F''^c=\{\al_j\}\sqcup\{j_1,i_2\}$. Since $\al_j\geq 6$, $j_1 = \alpha_j-3 \geq 3$. So we have $3\leq j_1=\al_j-3<\al_j<\al_j+1=j_2<i_2$. It follows that $j_1\nsim \al_j$ and  $j_1\nsim i_2$. Hence $F''\in M(\D{3}{C_n^2})$.
Since $j_1=\al_j-3$ and $i_2\geq\al_j+2$, $F''\notin\cD$. Therefore $F''\prec F_j$ by \Cref{def:prec} \ref{def 3}, and thus  $F_r=F''$ satisfies \eqref{eqn:super} by \Cref{proposition:hash}.

Now, suppose $i_1\geq j_1$ and $i_2\leq j_2$. If $F_i\in\T_{j_0}$, then $\al_j=\al_i\neq i_2$ and $\{i_1,i_2\}\cap\{j_1,j_2\}=\emptyset$.  Thus, $i_2\leq j_2=\al_j+1$ implies that $i_2<\al_j$. Therefore, we have $\al_j-3=j_1<i_1<i_2<\al_j$. It follows that $C_n^2[F_i^c]$ is connected, a contradiction.  Hence $F_i\notin \T_{j_0}$.

    We know that $\al_i<_{\cO} \al_j=m+1$. Thus, $\al_i\in\{m-1,m\}$.
    We have $m-2=j_1\leq i_1<i_2\leq j_2=m+2$. Therefore, since $C_n^2[F_i^c]$ is disconnected, we get $\al_i=m-1$, $i_1=j_1=m-2$ and $i_2=j_2=m+2$. Hence $F_r=F_i$ satisfies \eqref{eqn:super}.
    
    \item $F_j$ satisfies \ref{d2}.
    
     We have ${F_j}^c =\{\al_j\}\sqcup\{\al_j-1,\al_j+3\}$, where $\al_j=m-1$.  Therefore $5\leq m\leq n-4$ implies that $4\leq\al_j\leq n-5$.

    Suppose $j_1\leq i_1<i_2\leq j_2$.
    If $F_i\in\T_{j_0}$, then we have $\al_j=\al_i\neq i_1$ and $\{i_1,i_2\}\cap\{j_1,j_2\}=\emptyset$. This implies that $\al_j<i_1<i_2<j_2=\al_j+3$. It follows that $C_n^2[F_i^c]$ is connected, a contradiction.  So $F_i\notin \T_{j_0}$.

   We have $m-2=j_1\leq i_1<i_2\leq j_2=m+2$.  Moreover, since $\al_i\os \al_j=m-1$, we get  $\al_i=m$. It follows that $C_n^2[F_i^c]$ is connected, again a contradiction.
   Hence our assumption that $j_1\leq i_1< i_2\leq j_2$ is false.

   Now,  we consider two cases: $i_1<j_1$ and $i_1\geq j_1$.

Suppose $i_1<j_1$.  Let  $F'^c:=(F_j^c\setminus\{j_1\})\sqcup\{i_1\}$. We have $i_1<j_1<\al_j<m$. Therefore $\al_j\os i_1$, and thus,  $\al_j\os j_2$ and $i_1<\al_j<j_2$ implies that $F'^c=\{\al_j\}\sqcup\{i_1,j_2\}$. First, we show that $F'\in M(\D{3}{C_n^2})$. We have $i_1<j_1=\al_j-1<\al_j<\al_j+3=j_2$. So $j_2\nsim\al_j$. Now, if $i_1\nsim \al_j$, then $F'\in M(\D{3}{C_n^2})$. So, let $i_1\sim \al_j$. Then  $i_1=\al_j-2\geq 2$ (as $\al_j\geq 4$). Thus $i_1\nsim j_2$. Hence $F'\in M(\D{3}{C_n^2})$.
    Further, since $i_1<j_1=\al_j-1$, $F'\notin\cD$. Thus $F'\prec F_j$ by \Cref{def:prec} \ref{def 3}. Therefore $F_r=F'$ satisfies \eqref{eqn:super} by \Cref{proposition:hash}.

 Now, suppose that $i_1\geq j_1$. If $i_2\leq j_2$, then $j_1\leq i_1< i_2\leq j_2$, which is a contradiction. So $i_2>j_2$. Let $F''^c:=(F_j^c\setminus\{j_2\})\sqcup\{i_2\}$. Since $\al_j\os j_2$, $j_2>\al_j$ and $\al_j<m$, we get $j_2\geq 2m-\al_j$ by \Cref{remark:order in S} \ref{part:i}, and hence $i_2>2m-\al_j$, thereby implying that $\al_j\os i_2$.  This means that $F''^c=\{\al_j\}\sqcup\{j_1,i_2\}$. Since $\al_j\geq 4$, we have $j_1\geq 3$. Then $3\leq j_1=\al_j-1<\al_j<\al_j+3=j_2<i_2$. It follows that $i_2\nsim j_1$  and $i_2\nsim\al_j$. Hence, $F''\in M(\D{3}{C_n^2})$.
Since $j_1=\al_j-1$ and $i_2\geq\al_j+4$, $F''\notin\cD$. Thus $F''\prec F_j$ by \Cref{def:prec} \ref{def 3} and $F_r=F''$ satisfies \eqref{eqn:super}. 

    \item $F_j$ satisfies \ref{d3}.
     
   We have ${F_j}^c =\{\al_j\}\sqcup\{\al_j-4,\al_j-3\}$, where $\al_j=m+1$.  Therefore $6\leq\al_j\leq n-3$ (as $5\leq m\leq n-4$).
    
     Let $F'^c:=(F_j^c\setminus\{j_2\})\sqcup\{i_1\}$ and  $F''^c:=(F_j^c\setminus\{j_2\})\sqcup\{i_2\}$. 
    First we show that if $i_1<j_1$, then $F_r=F'$ satisfies \eqref{eqn:super} and if $i_2>\al_j$, then $F_r=F''$ satisfies \eqref{eqn:super}. Later, we discuss the case when $i_1\geq j_1$ and $i_2\leq \al_j$.

    Suppose  $i_1<j_1$.  Since $\al_j\os j_1$, $j_1<\al_j$ and $\al_j>m$, we get $j_1<2m-\al_j$ by \Cref{remark:order in S} \ref{part:ii}, and hence $i_1<2m-\al_j$, thereby implying that $\al_j\os i_1$.  This means that $F'^c=\{\al_j\}\sqcup\{i_1,j_1\}$.  Since $i_1<j_1=\al_j-4<\al_j\leq n-3$, we get $\al_j\nsim i_1$  and $\al_j\nsim j_1$. Hence $F'\in M(\D{3}{C_n^2})$.  Further, since $i_1<j_1=\al_j-4$, $F'\notin\cD$. Thus $F'\prec F_j$ by \Cref{def:prec} \ref{def 3}.  Hence $F_r=F'$ satisfies \eqref{eqn:super} by \Cref{proposition:hash}.

 Let $i_2>\al_j$.  Since $\al_j>m$, we get $\al_j\os i_2$. Thus,  $\al_j\os j_1$ and $j_1<\al_j<i_2$ implies that $F''^c=\{\al_j\}\sqcup\{j_1,i_2\}$. Since $\al_j\geq 6$,  we have $2\leq j_1=\al_j-4<\al_j<i_2$. So, $j_1\nsim \al_j$ and $j_1\nsim i_2$. Thus $F''\in M(\D{3}{C_n^2})$.
Since $j_1=\al_j-4$ and $i_2\geq\al_j+1$, it follows that $F''\notin\cD$. Hence $F''\prec F_j$ by \Cref{def:prec} \ref{def 3}. Therefore $F_r=F''$ satisfies \eqref{eqn:super}.

Now, let $i_1\geq j_1$ and $i_2\leq \al_j$. Suppose $F_i\in\T_{j_0}$. Then $\al_j=\al_i\neq i_2$ and $\{i_1,i_2\}\cap\{j_1,j_2\}=\emptyset$. Thus, $i_1\geq j_1=j_2-1$ implies that $i_1>j_2$. So, we have $\al_j-3=j_2<i_1<i_2<\al_j$. It follows that $C_n^2[F_i^c]$ is connected, a contradiction. Hence $F_i\notin \T_{j_0}$.
   
   Since $\al_i<_{\cO} \al_j=m+1$, we get $\al_i\in\{m-1,m\}$.
    We have $m-3=j_1\leq i_1<i_2\leq \al_j=m+1$. Therefore,  $C_n^2[F_i^c]$ is disconnected implies that $\al_i=m$, $i_1=j_1=m-3$ and $i_2=\al_j=m+1$. Hence $F_r=F_i$ satisfies \eqref{eqn:super}.
    
    \item $F_j$ satisfies \ref{d4}. 
    
    We have ${F_j}^c =\{\al_j\}\sqcup\{\al_j+3,\al_j+4\}$, where $\al_j\in\{m-1, m, \dots, n-6\}$.  Since $m\geq 5$, we have $4\leq\al_j\leq n-6$.

   Let  $F'^c:=(F_j^c\setminus\{j_1\})\sqcup\{i_1\}$ and $F''^c:=(F_j^c\setminus\{j_1\})\sqcup\{i_2\}$.
    First we show that if $i_1<\al_j$, then $F_r=F'$ satisfies \eqref{eqn:super} and if $i_2>j_2$, then $F_r=F''$ satisfies \eqref{eqn:super}. Later, we discuss the case when $i_1\geq \al_j$ and $i_2\leq j_2$.

    Suppose $i_1<\al_j$. We first show that $F'\in M(\D{3}{C_n^2})$. Since $\al_j\leq n-6$, we get $j_2 = \alpha_j +4 \leq n-2$. We have $i_1<\al_j<\al_j+4=j_2\leq n-2$.  So $j_2\nsim\al_j$. 
    Now, if $i_1\nsim \al_j$, then $F'\in M(\D{3}{C_n^2})$. So, let $i_1\sim \al_j$. Then  $i_1 \in \{\alpha_j-2, \al_j-1\}$ and therefore $i_1 \geq 2$. Thus $i_1\nsim j_2$. Hence $F'\in M(\D{3}{C_n^2})$.
   We now show that $F'\notin\cD$. If $\al_j\os i_1$, then $i_1<\al_j<j_2$ and $\al_j\os j_2$ implies that $F'^c=\{\al_j\}\sqcup\{i_1,j_2\}$.
        Therefore, since $i_1\leq\al_j-1$ and $j_2=\al_j+4$, it follows that $F'\notin\cD$.
        Moreover, if $i_1\os\al_j$, then $\al_j\os j_2$ implies that $i_1\os j_2$. Thus $F'^c=\{i_1\}\sqcup\{\al_j,j_2\}$. Since $j_2=\al_j+4>i_1+4$, we get $F'\notin\cD$. Thus $F'\prec F_j$ by \Cref{def:prec} \ref{def 3}. Therefore $F_r=F'$ satisfies \eqref{eqn:super}.

Now, suppose $i_2>j_2$.  If $\al_j<m$, then $\al_j\os j_2$ and $j_2>\al_j$ implies that $j_2\geq 2m-\al_j$ by \Cref{remark:order in S} \ref{part:i}. Hence $i_2>2m-\al_j$, thereby implying that $\al_j\os i_2$. Also, if $\al_j\geq m$, then $i_2>j_2>\al_j$ implies that $\al_j\os i_2$.  This means that $F'^c=\{\al_j\}\sqcup\{j_2,i_2\}$.  We have $4\leq\al_j<\al_j+4=j_2<i_2$. Thus $\al_j\nsim j_2$ and $\al_j\nsim i_2$. Hence, $F''\in M(\D{3}{C_n^2})$. Since $i_2>j_2=\al_j+4$, $F''\notin\cD$. Thus $F'\prec F_j$ by \Cref{def:prec} \ref{def 3}. Therefore $F_r=F'$ satisfies \eqref{eqn:super}.

Finally, let $i_1\geq \al_j$ and $i_2\leq j_2$. 
 Suppose $F_i\in\T_{j_0}$. Then $\al_j=\al_i\neq i_1$ and $\{i_1,i_2\}\cap\{j_1,j_2\}=\emptyset$.  Thus, $i_2\leq j_2=j_1+1$ implies that $i_2<j_1$. Therefore, we have $\al_j<i_1<i_2<j_1=\al_j+3$. This implies that $C_n^2[F_i^c]$ is connected,  a contradiction.  Hence $F_i\notin \T_{j_0}$.
 
  We have $\al_j\in\{m-1,m,\ldots,n-6\}$. 
  Since $F_i\notin \T_{j_0}$ implies that  $\al_i\os \al_j$, we have $\al_j\neq m$. We consider two cases: (i) $\al_j=m-1$, and (ii) $\al_j\in\{m+1, m+2, m+3, \ldots, n-6\}$.

 \begin{enumerate}[label=(\roman*)]
     \item $\al_j=m-1$. 
     
     Since $\al_i\os\al_j$, $\al_i=m$. We have $m-1=\al_j\leq i_1<i_2\leq j_2=\al_j+4=m+3$.
     Therefore, $C_n^2[F_i^c]$ is disconnected implies that $i_1=\al_j=m-1$ and $i_2=j_2=m+3$. Hence $F_r=F_i$ satisfies \eqref{eqn:super}.

     \item $\al_j\in\{m+1, m+2, m+3, \ldots, n-6\}$.

     Let $F'''^c:=(F_j^c\setminus\{\al_j\})\sqcup\{\al_i\}$. Since $\al_i\os\al_j$, we get $\al_i\os j_1,j_2$. Thus $F'''^c=\{\al_i\}\sqcup\{j_1,j_2\}$.
      We have $\al_i\os \al_j$ and $\al_j>m$. So by \Cref{remark:order in S} \ref{part:iv}, $\al_j>\al_i\geq 2m-\al_j\geq 2m-(n-6)\geq 6$. Then, we have $6\leq\al_i<\al_j<\al_j+3=j_1<j_2$.  It follows that $\al_i\nsim j_1$ and $\al_i\nsim j_2$. Hence $F'''\in M(\D{3}{C_n^2})$. Since $\al_i\os\al_j$, \Cref{proposition:precedes} implies that $F'''\prec F_j$. Hence  $F_r=F'''$ satisfies \eqref{eqn:super} by \Cref{proposition:hash}. 
 \end{enumerate}

\end{enumerate}
     
\end{proof}

 From Lemmas \ref{case:A}, \ref{case:B} and \ref{case:D}, it follows that for any $F_i, F_j\in M(\D{3}{C_n^2})$ with $F_i\prec F_j$, there exists an $F_r \prec F_j$ such that $F_i \cap F_j \subseteq F_r \cap F_j$ and $|F_r \cap F_j| = $ $|F_j| - 1$. This means that $\prec$ provides a shelling order for $\D{3}{C_n^2}$. 

\subsection{Spanning Facets}\label{subsection:spanning facets}

In this section, we characterize and count the spanning facets for the shelling order $\prec$. 

\begin{remark}\label{remark:F is a spanning facet}    Observe that by definition, any facet $F\in M(\Delta_3(C_n^2))$ is a spanning facet if and only if for each  $\mu \in F$, there exists $ \tilde{F} \in M(\Delta_3(C_n^2))$ such that $\tilde{F} \prec F$ and $\tilde{F}^c=(F^c\setminus\{\nu\})\sqcup\{\mu\}$ for some $\nu\in F^c$.
\end{remark}

Define a subset $\cS$ of $M(\Delta_3(C_n^2))$ as follows:  A facet $F\in\cS$ if and only if there exists $s$ such that $F\in\T_s$ and $F$ satisfies one of the following conditions: 

\begin{enumerate}[label=$(\mathcal{S}_{\arabic*})$]
    \item \label{s1} $F^c=\{\al_s\}\sqcup\{s_1,n-1\}$, where $\al_s=3$ and $s_1\in V(C_n^2)\setminus(\{0, 1, 2, 3, \ldots, 2m-4\}\sqcup\{n-1\})$.

      \item \label{s2}  $F^c=\{\al_s\}\sqcup\{s_1,n-1\}$, where $\al_s\in\{4,5,\ldots,m-2\}$ and $s_1\in V(C_n^2)\setminus(\{\al_s-4, \al_s-3, \al_s-2\}\sqcup\{\al_s, \al_s+1, \ldots, 2m-\al_s-1\}\sqcup\{n-1\})$.

     \item \label{s3} $F^c=\{\al_s\}\sqcup\{s_1,n-1\}$, where $\al_s\in\{m-1, m, m+1,m+2, \ldots, n-3\}$,  $s_1\in V(C_n^2)\setminus(\{\omega, \omega+1, \ldots,\al_s,\al_s+1,\al_s+2,\al_s+3 \text{ (mod  $n$})\}\sqcup\{y\})$,  $\omega = \min\{2m-\al_s, \al_s-4\}$ and $$y=\begin{cases}
    n-1 &\text{ if } \al_s<n-4,\\
    0 &\text{ if } \al_s=n-4,\\
    1 &\text{ if } \al_s>n-4.  
    \end{cases}$$ 

\end{enumerate}

Our aim is to show that a facet $F\in M(\Delta_3(C_n^2))$ is a spanning facet if and only if $F \in \cS$. 
\begin{proposition}\label{proposition:order ll}
    Let $F, F' \in M(\Delta_3(C_n^2))$. Let  $F \in \T_s$  such that $F^c=\{\al_s\}\sqcup\{s_1, n-1\}$ and ${F'}^c = \{\al_s, s_1, \mu\}$ for some   $\mu \in F$. If $\al_s\os\mu$, then  $F'\in\T_s$ and  $F'\ll F$. 
\end{proposition}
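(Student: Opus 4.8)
The plan is to unwind the definition of the order $\ll$ on $M(\Delta_3(W_n))$. Recall that $\ll$ is built from the partition $\mathcal T=\bigsqcup_s\T_s$ together with the orders $<_{\T_s}$. So the first task is to determine which block $\T_t$ contains $F'$, and then, if $F'$ lands in the same block as $F$, to compare $F$ and $F'$ via $<_{\T_s}$. The hypothesis $F^c=\{\al_s\}\sqcup\{s_1,n-1\}$ with (by our standing convention on elements of $\T_s$) $s_1<n-1$, together with $\al_s\os s_1,n-1$ (\Cref{remark:Aj}), is what drives everything.

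First I would handle the membership of $F'$ in the partition. We have ${F'}^c=\{\al_s,s_1,\mu\}$ and by hypothesis $\al_s\os\mu$; also $\al_s\os s_1$ by \Cref{remark:Aj}. Hence $\al_s$ is $\os$-smallest among the three elements of ${F'}^c$, which forces $F'\in\T_s$ (the index of a facet's block is exactly the $\os$-least element of its complement). So $F$ and $F'$ lie in the same block $\T_s$, and $F\ll F'$ or $F'\ll F$ is decided purely by $<_{\T_s}$, i.e.\ by lexicographically comparing the ordered pairs obtained from $\{s_1,n-1\}$ and from $\{s_1,\mu\}\setminus\{\text{nothing}\}$ after sorting — write ${F'}^c=\{\al_s\}\sqcup\{s_1',s_2'\}$ with $s_1'<s_2'$, where $\{s_1',s_2'\}=\{s_1,\mu\}$.

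The second task is the numerical comparison. Since $\mu\in F$ we have $\mu\neq\al_s$ and $\mu\neq s_1$ and $\mu\neq n-1$; in particular $\mu<n-1=s_2$ in the pair for $F^c$. Now split into the two cases for where $\mu$ sits relative to $s_1$. If $\mu>s_1$, then the sorted pair for ${F'}^c$ is $(s_1,\mu)$ with $s_1=s_1'$ and $\mu=s_2'<n-1=s_2$, so by definition of $<_{\T_s}$ (equal first coordinate, strictly smaller second coordinate) we get $F'<_{\T_s}F$, hence $F'\ll F$. If $\mu<s_1$, then the sorted pair is $(\mu,s_1)$ with $s_1'=\mu<s_1$, so the first coordinates already satisfy $s_1'<s_1$, giving $F'<_{\T_s}F$ and again $F'\ll F$. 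In either case $F'\ll F$, which is the claim.

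The main (and only mild) obstacle is the bookkeeping in the first task: one must be careful that the block index of a facet is genuinely the $\os$-least element of its complement, and that the standing convention "$\tau^c=\{\al_s\}\sqcup\{s_1,s_2\}$ with $s_1<s_2$" refers to the usual order on $\{0,\dots,n-1\}$, not to $\os$ — so that the lexicographic comparison in $<_{\T_s}$ is on ordinary integers. Once that is pinned down, both cases are immediate from $\mu<n-1$ and a dichotomy on $\mu$ versus $s_1$; no properties of $W_n$ beyond \Cref{remark:Aj} are needed, and disconnectedness of the induced subgraphs plays no role here.
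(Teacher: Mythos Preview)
Your proof is correct and follows essentially the same approach as the paper: first show $F'\in\T_s$ from $\al_s\os\mu,s_1$, then split on $\mu<s_1$ versus $\mu>s_1$ and use $\mu<n-1$ to conclude $F'<_{\T_s}F$ in each case. Your write-up is more explicit about the bookkeeping (why $\mu\neq n-1$, the convention that $s_1<s_2$ refers to the usual integer order), but the argument is the same.
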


\begin{proof}
    Since $\al_s\os \mu,s_1$, we have $F'\in\T_s$. If $\mu<s_1$, then $F'^c=\{\al_s\}\sqcup\{\mu,s_1\}$ and $F'\ll F$  by \Cref{definition:order_ll} \ref{s=t}. On the other hand, if $\mu>s_1$, then $F'^c=\{\al_s\}\sqcup\{s_1,\mu\}$, and thus $\mu<n-1$ implies that $F'\ll F$ \Cref{definition:order_ll} \ref{s=t}.
\end{proof}

\begin{proposition}\label{spannig:s1s2}

      Let $F\in M(\D{3}{C_n^2})$ such that $F\in\T_s$  for some $s$ and $F^c=\{\al_s\}\sqcup\{s_1, n-1\}$. Suppose $\al_s\in \{3, 4, \ldots, m-2\}$ and $s_1\in \mathcal{U}=V(C_n^2)\setminus\bigl(\{\al_s-4 \ (\text{mod $n$}), \al_s-3, \al_s-2, \ldots, 2m-\al_s-1\}\cup\{n-1\}\bigr)$. Then $F$ is a spanning facet.
\end{proposition}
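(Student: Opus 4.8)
The goal is to show that a facet $F$ with $F^c = \{\al_s\}\sqcup\{s_1,n-1\}$, $\al_s\in\{3,4,\ldots,m-2\}$, and $s_1$ ranging over the set $\mathcal{U}$, is a spanning facet. By \Cref{remark:F is a spanning facet}, I must produce, for every $\mu\in F$, a facet $\tilde F\prec F$ with $\tilde F^c = (F^c\setminus\{\nu\})\sqcup\{\mu\}$ for some $\nu\in F^c=\{\al_s,s_1,n-1\}$. So for each vertex $\mu\notin F^c$ I will pick a suitable $\nu$ to delete and verify two things: (i) the resulting set $\tilde F^c = (F^c\setminus\{\nu\})\sqcup\{\mu\}$ induces a disconnected subgraph of $W_n$, so $\tilde F\in M(\Delta_3(W_n))$; and (ii) $\tilde F\prec F$.

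\textbf{Key steps.} First I would set up notation: recall that $\al_s<m$, so by \Cref{remark:order in S}\ref{part:i} the condition $\al_s\os x$ holds iff $x<\al_s$ or $x\ge 2m-\al_s$; and since $W_n[F^c]$ is disconnected with $\al_s\os s_1,n-1$, there are constraints forcing $\al_s\notin\{0,1,n-1\}$ (which is given, $\al_s\ge 3$) and that $s_1$ is far from both $\al_s$ and $n-1$ (this is exactly what $s_1\in\mathcal{U}$ plus membership in $\T_s$ encodes: $s_1\notin\{\al_s-4,\dots,\al_s+? \}$ roughly, and $s_1\nsim n-1$, $s_1\nsim\al_s$). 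Then I would handle the vertices $\mu$ by natural groups. The most important subcase is $\mu = n-1+?$ — actually the key targets are: delete $\nu=s_1$ and replace by some $\mu$ with $\al_s\os\mu$ (handled by \Cref{proposition:order ll}, which gives $\tilde F\ll F$ immediately, and then I must check $\tilde F\notin\cD$ to upgrade $\ll$ to $\prec$ via \Cref{def:prec}\ref{def 1}); delete $\nu=n-1$ and replace by $\mu$, noting $\al_s = 3,\dots,m-2 < m$ is still the smallest element of the new complement's "$\T$"-index only if $\mu\os\al_s$ fails, so I must be careful about which $\T_t$ the new facet lands in; and delete $\nu=\al_s$. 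For the deletion of $\al_s$: the new complement is $\{\mu,s_1,n-1\}$, and since $\mu\in F$ we have $\al_s\os\mu$ is false for $\mu<\al_s$ wait — no, we need $\mu\os\al_s$, i.e., $\mu$ precedes $\al_s$ in $\cO$; for such $\mu$ the new facet lies in some $\T_t$ with $t<s$, hence $\tilde F\ll F$, hence $\tilde F\prec F$ as long as $\tilde F\notin\cD$. So the recurring pattern is: (a) show the replacement keeps the induced subgraph disconnected, using that $s_1$ is still far from everything (the defining property of $\mathcal{U}$), or that some new vertex becomes isolated; (b) identify the $\T$-index of $\tilde F$; (c) check $\tilde F\notin\cD$ by inspecting the four conditions \ref{d1}--\ref{d4} against the explicit coordinates.

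\textbf{The main obstacle.} The delicate part is disconnectedness preservation when we replace $n-1$ by a vertex $\mu$ that might be adjacent to $s_1$ or to $\al_s$, and more subtly, cases where $\mu$ is close to $s_1$ — because $s_1$ was only guaranteed to be far from $\al_s$ and from $n-1$, not from an arbitrary new vertex. In those cases disconnectedness of $\{\al_s,s_1,\mu\}$ (after deleting $n-1$) can fail, and I would instead delete $s_1$ rather than $n-1$: replace $s_1$ by $\mu$, keeping $\{\al_s,\mu,n-1\}$; here $\al_s$ and $n-1$ are non-adjacent (as $3\le\al_s\le m-2$ and $n$ large), so at worst $\mu$ connects them, which happens only for a couple of values of $\mu$ near $\al_s\pm 2$ or near $n-3$ — and those boundary values of $\mu$ are handled by yet another choice (e.g.\ deleting $\al_s$ instead). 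Keeping this case-split exhaustive and non-overlapping, while simultaneously tracking the $\cD$-membership bookkeeping so that every produced $\tilde F$ genuinely satisfies $\tilde F\prec F$ and not $F\prec\tilde F$, is the real work; the graph-theoretic adjacency checks in $W_n$ themselves are routine modular-arithmetic verifications given $n\ge 9$ and $5\le m\le n-4$.
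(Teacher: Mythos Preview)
Your overall framework is right --- for each $\mu\in F$ you must find some $\nu\in F^c$ so that swapping $\nu$ for $\mu$ produces a facet preceding $F$ --- but your plan is more complicated than it needs to be and contains one misreading.

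Two simplifications you miss make the paper's proof much shorter than what you outline. First, since $\al_s\in\{3,\ldots,m-2\}$ we have $\al_s<m-1$, and inspecting \ref{d1}--\ref{d4} shows every element of $\cD$ has its $\T$-index $\al_s\ge m-1$; so $F\notin\cD$, and any $\tilde F$ landing in $\T_s$ is automatically not in $\cD$. The $\cD$-bookkeeping you flag as ``the real work'' is a one-line remark. Second, $s_1\in\mathcal U$ together with $2m-\al_s-1\ge\al_s+3$ forces $s_1\notin\{\al_s-4,\ldots,\al_s+3\}$, hence either $s_1=\al_s+4$ or $|\al_s-s_1|\ge 5$ (linear distance). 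In the latter case, for \emph{every} $\mu$ the set $\{\al_s,s_1,\mu\}$ is disconnected in $W_n$: with $\al_s\ge 3$ and $s_1<n-1$, no single vertex can be adjacent to both. So the paper takes $\nu=n-1$ for every $\mu$; the resulting $F'$ is always a facet, and $F'\prec F$ follows from \Cref{proposition:precedes} when $\mu\os\al_s$, or from \Cref{proposition:order ll} plus \Cref{def:prec}\ref{def 1} when $\al_s\os\mu$. The single exception is $s_1=\al_s+4$, $\mu=\al_s+2$, where $\{\al_s,\al_s+2,\al_s+4\}$ is a path; there one deletes $s_1$ instead and checks $\{\al_s,\al_s+2,n-1\}$ is disconnected.

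You also misread \Cref{proposition:order ll}: it applies to $F'^c=\{\al_s,s_1,\mu\}$, i.e.\ the facet obtained by deleting $n-1$, not by deleting $s_1$ as you write. If you delete $s_1$ and keep $n-1$, the resulting $\tilde F^c=\{\al_s\}\sqcup\{\mu,n-1\}$ satisfies $\tilde F\ll F$ only when $\mu<s_1$, so that branch of your plan does not cover the range you intend. Your three-way split (delete $\al_s$, $s_1$, or $n-1$ according to where $\mu$ lies) could be salvaged, but the paper's ``delete $n-1$ always, with one exception'' is considerably cleaner.
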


\begin{proof}
From \Cref{remark:F is a spanning facet}, it is sufficient to show that for each $\mu \in F$, there exists a facet  $\tilde{F}\in M(\D{3}{C_n^2}) $ such that $\tilde{F} \prec F$ and $\tilde{F}^c = (F^c \setminus \{\nu\}) \sqcup \{\mu\}$ for some $\nu \in F^c$. 

    Let $\mu\in F$ and $F'^c:=(F^c\setminus\{n-1\})\sqcup\{\mu\}=\{\al_s, s_1, \mu\}$.
    Since $\al_s\leq m-2$, we have $2m-\al_s-1\geq m+1\geq \al_s+3$, and hence $2m-\al_s-1\geq\al_s+3$. First, suppose that $s_1\neq\al_s+4$. Then $s_1\in \mathcal{U}$ implies that $s_1\notin\{\al_s-4 \text{ (mod $n$)}, \al_s-3, \al_s-2, \ldots, \al_s+4\}\cup\{n-1\}$. This means that $\al_s\nsim s_1$ and $|\al_s-s_1|\geq5$.
    Further, since $\al_s\geq3$ and $s_1<n-1$, we get $\mu\nsim\al_s$ or $\mu\nsim s_1$. Thus $F'\in M(\Delta_3(C_n^2))$. 
    If $\mu\os\al_s$, then $F'\prec F$ by \Cref{proposition:precedes}, and we take $\tilde{F}=F'$. So, let $\al_s\os \mu$. 
    Then  $F'\in\T_s$ and  $F'\ll F$ by \Cref{proposition:order ll}. Since $\al_s<m-1$, we have $F,F'\notin\cD$. Hence, $F'\prec F$ by \Cref{def:prec} \ref{def 1}. Thus, we take $\tilde{F}=F'$.
        
     Now suppose that $s_1=\al_s+4$. First, let $\mu\neq\al_s+2$. We have $3\leq\al_s<\al_s+4=s_1<n-1$.  It follows that $\mu\nsim\al_s$ or $\mu\nsim s_1$.  Moreover, $\al_s\nsim s_1$, and hence  $F'\in M(\D{3}{C_n^2})$.    
      If $\mu\os\al_s$, then $F'\prec F$ by \Cref{proposition:precedes}, and we take $\tilde{F}=F'$. So assume that $\al_s\os \mu$. 
      Then  $F'\in\T_s$ and  $F'\ll F$ by \Cref{proposition:order ll}. Since $\al_s<m-1$, we have $F,F'\notin\cD$.  Therefore, $F'\prec F$ by \Cref{def:prec} \ref{def 1}, and we take $\tilde{F}=F'$.
      
      Now, let $\mu=\al_s+2$. Let $F''^c:=(F^c\setminus\{s_1\})\sqcup\{\mu\}=\{\al_s, \mu, n-1\}$. Since  $3\leq\al_s<\al_s+2=\mu=s_1-2<s_1<n-1$, it follows that $\al_s\nsim n-1$ and $\mu\nsim n-1$. 
      Therefore $F''\in M(\Delta_3(C_n^2))$.  If $\mu\os\al_s$, then $F''\prec F$ by \Cref{proposition:precedes} and we take $\tilde{F}=F''$. So, let $\al_s\os \mu$. Then $F''\in\T_s$ and thus, $F''^c=\{\al_s\}\sqcup\{\mu,n-1\}$. Since $\mu=\al_s+2<\al_s+4=s_1$, we have  $F''\ll F$  by \Cref{definition:order_ll} \ref{s=t}. Also, $\al_s<m-1$ implies that $F,F''\notin\cD$. Hence, $F''\prec F$  by \Cref{def:prec} \ref{def 1}  and we take $\tilde{F}=F''$.

    Therefore, in each case, we have a facet $\tilde{F}\in M(\Delta_3(C_n^2))$ such that $\tilde{F}\prec F$ and $\tilde{F}^c=(F^c\setminus\{\nu\})\sqcup\{\mu\}$ for some $\nu\in F^c$. Thus, $F$ is a spanning facet.   
\end{proof}

\begin{lemma}
    Suppose that $F\in\cS$. Then $F$ is a spanning facet. \label{lemma: if F in S}
\end{lemma}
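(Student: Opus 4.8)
The plan is to show that every facet $F \in \cS$ is a spanning facet by verifying, for each of the three defining conditions $(\cS_1)$, $(\cS_2)$, $(\cS_3)$, that for each $\mu \in F$ there is a facet $\tilde F \prec F$ with $\tilde F^c = (F^c \setminus \{\nu\}) \sqcup \{\mu\}$ for some $\nu \in F^c$, as demanded by \Cref{remark:F is a spanning facet}. The first observation is that \Cref{spannig:s1s2} already disposes of condition $(\cS_2)$ essentially verbatim: if $F^c = \{\al_s\} \sqcup \{s_1, n-1\}$ with $\al_s \in \{4, \ldots, m-2\}$ and $s_1$ lies outside the prescribed forbidden set, then one checks the forbidden set there, $\mathcal{U}$'s complement, contains the one appearing in $(\cS_2)$ (indeed $\{\al_s-4,\al_s-3,\al_s-2\}\sqcup\{\al_s,\ldots,2m-\al_s-1\}\sqcup\{n-1\}$ equals $\{\al_s-4,\ldots,2m-\al_s-1\}\sqcup\{n-1\}$ once one notes $\al_s-1$ is absent from $(\cS_2)$'s list — so one must be slightly careful about whether $\al_s-1$ is forbidden). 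So the bulk of the argument is $(\cS_1)$ and $(\cS_3)$, which have the same flavour.

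For $(\cS_1)$ we have $\al_s = 3$, $F^c = \{3\} \sqcup \{s_1, n-1\}$ with $s_1$ avoiding $\{0,1,2,\ldots,2m-4\} \sqcup \{n-1\}$; in particular $2m-3 \le s_1 \le n-2$, so $s_1 \not\sim 3$ and $s_1 \not\sim n-1$ (since $s_1 \le n-2 < n-1$ and $s_1 \ge 2m-3 \ge 6 > 5$). The strategy mirrors \Cref{spannig:s1s2}: given $\mu \in F$, first try $F' $ with $F'^c = (F^c \setminus \{n-1\}) \sqcup \{\mu\} = \{3, s_1, \mu\}$. Since $3 \sim s_1$ fails and $s_1 < n-1$, at least one of $\mu \not\sim 3$, $\mu \not\sim s_1$ holds, so $W_n[F'^c]$ is disconnected and $F' \in M(\D3{W_n})$. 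Then either $\mu \os 3$, giving $F' \prec F$ by \Cref{proposition:precedes}, or $3 \os \mu$, giving $F' \ll F$ by \Cref{proposition:order ll} and hence $F' \prec F$ by \Cref{def:prec}\ref{def 1} after checking neither $F$ nor $F'$ lies in $\cD$ (here $\al_s = 3 < m-1$, so indeed $F, F' \notin \cD$). The only $\mu$ for which this $F'$ might fail to be a valid facet is one forcing a triangle $\{3, s_1, \mu\}$, i.e. $\mu$ adjacent to both $3$ and $s_1$; since $s_1 \ge 2m-3$, such $\mu$ must be $s_1 + 2$ (the value "two below" $s_1$ modulo $n$ is blocked by $\mu \not\sim 3$ as $\mu$ would be too large, unless $s_1 = n-2$ — a boundary case to handle), and then one replaces $s_1$ instead: $F''^c = \{3, \mu, n-1\} = \{3, s_1+2, n-1\}$, which is disconnected because $3 \not\sim n-1$ and $s_1 + 2 \not\sim n-1$ when $s_1 + 2 \le n-2$; the comparison $F'' \prec F$ follows as before since $s_1+2 > s_1$ would need care, but actually $\mu = s_1 \pm 2$ and one picks the replacement so that the new second coordinate is smaller, yielding $F'' \ll F$.

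For $(\cS_3)$ the interval of allowed $\al_s$ is $\{m-1, m, \ldots, n-3\}$, so now $F$ may lie in $\cD$ (e.g. when $\al_s \ge m-1$ and $F^c$ has the shape $(\cD_4)$ — but note $(\cS_3)$ forbids $s_1 \in \{\omega, \ldots, \al_s+3\}$, which rules out $\al_s + 3$ and $\al_s + 4$, so in fact $F \notin \cD$ via $(\cD_4)$; one similarly checks $(\cD_1)$, $(\cD_2)$, $(\cD_3)$ are excluded). The hypotheses again force $s_1$ far from $\al_s$: $s_1 \notin \{\omega, \ldots, \al_s+3 \bmod n\}$ with $\omega = \min\{2m-\al_s, \al_s-4\}$, so $\al_s \not\sim s_1$ and $s_1$ is bounded away from $y \in \{n-1, 0, 1\}$ appropriately, giving $s_1 \not\sim y$. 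Then one runs exactly the same two-pronged argument: try $F'^c = \{\al_s, s_1, \mu\}$, falling back to $F''^c = \{\al_s, \mu, y\}$ only when $\mu$ would create the triangle $\{\al_s, s_1, \mu\}$, and in that fallback one must check $W_n[\{\al_s, \mu, y\}]$ is disconnected using $\mu \not\sim y$ (which needs the explicit form of $y$) and that $\al_s \not\sim y$, then invoke \Cref{proposition:precedes} or \Cref{proposition:order ll}.

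I expect the main obstacle to be the bookkeeping in the fallback cases and the $(\cS_3)$ analysis near the "wraparound" values $\al_s = n-4$ and $\al_s > n-4$, where $y$ changes from $n-1$ to $0$ to $1$ and the modular arithmetic in $W_n$ can create unexpected adjacencies; one must carefully confirm in each sub-range that the replacement vertex $\mu$ (when it is the problematic triangle-completing vertex $s_1 \pm 2$ or $\al_s \pm 1, \al_s \pm 2$) still yields a disconnected induced subgraph and a strictly earlier facet in the order $\prec$. Verifying $F \notin \cD$ in the $(\cS_3)$ regime, and checking that the claimed forbidden-set inclusions between $(\cS_2)$ and \Cref{spannig:s1s2} are exact (so that \Cref{spannig:s1s2} truly applies), are the other places where a slip is easy.
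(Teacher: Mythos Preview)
Your overall strategy matches the paper's—try $F'^c = (F^c \setminus \{n-1\}) \sqcup \{\mu\}$ and fall back to $F''^c = (F^c \setminus \{s_1\}) \sqcup \{\mu\}$—but the case allocation and the exceptional-$\mu$ analysis both go wrong. You have the roles of $(\cS_1)$ and $(\cS_2)$ reversed with respect to \Cref{spannig:s1s2}: for $\al_s = 3$ one has $\al_s - 4 \equiv n-1 \pmod n$, so the set $\mathcal{U}$ of that proposition coincides \emph{exactly} with the permitted $s_1$ in $(\cS_1)$, and the proposition disposes of $(\cS_1)$ in one line. By contrast $(\cS_2)$ allows $s_1 = \al_s - 1$, which lies outside $\mathcal{U}$; this is the one sub-case of $(\cS_2)$ requiring direct work, and you flag it but do not carry it out. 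The paper does so by splitting on $\mu < \al_s$, $\mu \in \{\al_s+1, \al_s+2\}$, and $\mu > \al_s + 2$, using the $F''$ fallback for the first two ranges.

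In your $(\cS_1)$ and $(\cS_3)$ sketches the exceptional $\mu$ is misidentified. The situation where $W_n[\{\al_s, s_1, \mu\}]$ is connected arises precisely when $s_1 = \al_s + 4$ and $\mu \in \{\al_s + 2, \al_s + 3\} = \{s_1 - 2, s_1 - 1\}$, not $\mu = s_1 + 2$; and $(\cS_3)$ does \emph{not} exclude $s_1 = \al_s + 4$, so this is a genuine sub-case (the paper splits $(\cS_3)$ exactly on whether $s_1 = \al_s + 4 \pmod n$, the latter forcing $\al_s \le n-6$). The fallback complement in $(\cS_3)$ is always $\{\al_s, \mu, n-1\}$, obtained by replacing $s_1$; your proposed $\{\al_s, \mu, y\}$ is not of the form $(F^c \setminus \{\nu\}) \sqcup \{\mu\}$ when $y \in \{0,1\}$. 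Finally, the auxiliary claims $s_1 \nsim n-1$ in $(\cS_1)$ and $s_1 \nsim y$ in $(\cS_3)$ are false in general (take $s_1 = n-2$), though fortunately the argument does not actually need them.
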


\begin{proof}

    Since $F\in\cS$, we have $F\in\T_s$ for some $s$ and $F^c=\{\al_s\}\sqcup\{s_1, n-1\}$. Let $\mathcal{U}=V(C_n^2)\setminus\bigl(\{\al_s-4 \ (\text{mod $n$}), \al_s-3, \al_s-2, \ldots, 2m-\al_s-1\}\cup\{n-1\}\bigr)$.
    
   Let $\mu\in F$. Using \Cref{remark:F is a spanning facet}, it is sufficient to find a facet $\tilde{F}$ such that $\tilde{F} \prec F$ and $\tilde{F}^c =( F^c \setminus \{\nu\}) \sqcup \{\mu\}$ for some $\nu \in F^c$.   Let  $F'^c:=(F^c\setminus\{n-1\})\sqcup\{\mu\}=\{\al_s, s_1, \mu\}$ and  $F''^c:=(F^c\setminus\{s_1\})\sqcup\{\mu\}=\{\al_s, \mu, n-1\}$.
    
    We have three cases: (i) $F$ satisfies \ref{s1} (ii) $F$ satisfies \ref{s2} and (iii) $F$ satisfies \ref{s3}.

    \begin{enumerate}[label=(\roman*)]
    
        \item $F$ satisfies \ref{s1}.
        
     We have $\al_s=3$ and $s_1\in V(C_n^2)\setminus(\{0, 1, 2, 3, \ldots, 2m-4\}\sqcup\{n-1\})$. Observe that $s_1\in\mathcal{U}$.  Therefore, $F$ is a spanning facet by \Cref{spannig:s1s2}.
        
        \item $F$ satisfies \ref{s2}.
        
We have $\al_s\in\{4,5,\ldots,m-2\}$ and    $s_1 \in  V(C_n^2)\setminus  \{\al_s-4, \al_s-3, \al_s-2\}\sqcup\{\al_s, \al_s+1, \ldots, 2m-\al_s-1\}\sqcup\{ n-1\}$.   

If $s_1\neq\al_s-1$, then $s_1\in\mathcal{U}$, and  thus, $F$ is a spanning facet by \Cref{spannig:s1s2}.

        So assume that $s_1=\al_s-1$.  We have $m\leq n-4$ (as $n\geq 9$).  We consider three cases: $\mu<\alpha_s$, $\mu\in\{\al_s+1, \al_s+2\}$,  and $\mu>\al_s+2$. 
        
        First, let  $\mu < \alpha_s$. Then  $\mu<s_1=\al_s-1$  (as $\mu\in F$). Moreover, $\al_s \leq m-2$ implies that  $\al_s\os \mu$. Hence $F''=\{\al_s\}\sqcup\{\mu,n-1\}$. Since $4\leq\al_s\leq m-2\leq n-6$, we have $\al_s\nsim n-1$. Further, $\mu<\al_s$ implies that either $\mu\in\{0, 1\}$, or $2\leq\mu<\al_s$. Hence, $\mu\nsim\al_s$ or $\mu\nsim n-1$. Therefore $F''\in M(\Delta_3(C_n^2))$. Since $\mu<s_1$, we have $F''\ll F$  by \Cref{definition:order_ll} \ref{s=t}. Moreover, $\al_s<m-1$ implies that $F,F''\notin\cD$. Therefore, $F''\prec F$ \Cref{def:prec} \ref{def 1} and we take $\tilde{F}=F''$.
        
        Now, let $\mu\in\{\al_s+1, \al_s+2\}$. We have $4\leq\al_s<\mu\leq\al_s+2\leq m\leq n-4<n-1$. Thus, $\al_s\nsim n-1$ and $\mu\nsim n-1$. Hence $F''\in M(\Delta_3(C_n^2))$.  Since $\al_s\leq m-2$, we have $\mu\os\al_s$. Therefore, $F''\prec F$ by \Cref{proposition:precedes}, and we take $\tilde{F}=F''$.
        
        Lastly, assume that $\mu>\al_s+2$. Then $3\leq\al_s-1=s_1<\al_s<\al_s+2<\mu<n-1$ implies that $\mu\nsim\al_s$ and $\mu\nsim s_1$.  Hence $F'\in M(\Delta_3(C_n^2))$. If $\mu\os\al_s$, then  $F'\prec F$ by \Cref{proposition:precedes}, and we take $\tilde{F}=F'$. Now, if $\al_s\os \mu$, then  $F'\in\T_s$ and  $F'\ll F$ by \Cref{proposition:order ll}. Since $\al_s<m-1$, we have $F,F'\notin\cD$. Hence,  $F'\prec F$ by \Cref{def:prec} \ref{def 1}.  Therefore, we take $\tilde{F}=F'$.

        \item $F$ satisfies \ref{s3}.
        
        We have $F^c=\{\al_s\}\sqcup\{s_1,n-1\}$, where $\al_s\in\{m-1, m, m+1,m+2, \ldots, n-3\}$, $s_1 \in V(C_n^2)\setminus (\{\omega, \omega+1, \ldots,\al_s,\al_s+1,\al_s+2,\al_s+3 \text{ (mod  $n$})\}\sqcup\{y\})$,    $\omega = \min\{2m-\al_s, \al_s-4\}$ and $$y=\begin{cases}
    n-1 &\text{ if } \al_s<n-4,\\
    0 &\text{ if } \al_s=n-4,\\
    1 &\text{ if } \al_s>n-4.  
    \end{cases}$$

        We have $m\geq 5$ (as $n\geq 9$).  First, suppose that $s_1\neq\al_s+4$ (mod $n$). Since $s_1\notin\{\omega, \omega+1, \ldots, \al_s+3 \text{ (mod $n$)}\}$ and $\omega\leq\al_s-4$, it follows that $s_1<\al_s-4$ or $s_1>\al_s+4$ (mod $n$). Thus  $|\al_s-s_1|\geq5$. Now, since $\al_s\geq m-1\geq 4$ and $s_1<n-1$, we get $\mu\nsim \al_s$ or $\mu\nsim s_1$. We have  $\al_s\nsim s_1$. Therefore $F'\in M(\Delta_3(C_n^2))$.
        If $\mu\os\al_s$, then  $F'\prec F$ by \Cref{proposition:precedes}, and we take $\tilde{F}=F'$. Now, if $\al_s\os \mu$,  then  $F'\in\T_s$ and  $F'\ll F$ by \Cref{proposition:order ll}.  Since  $s_1<\al_s-4$ or $s_1>\al_s+4$ (mod $n$), we have $F, F'\notin\cD$.
        Hence $F'\prec F$ by \Cref{def:prec} \ref{def 1}.  Therefore, we take $\tilde{F}=F'$.

        Now, let $s_1=\al_s+4$ (mod $n$). Observe that if $\al_s\geq n-5$, then $\al_s+4\ \text{(mod $n$)}=y$. Therefore, since $s_1\neq y$, we have $\al_s\leq n-6$. This means that $\al_s+4\ \text{(mod $n$)}=\al_s+4$. First, let $\mu\notin\{\al_s+2,\al_s+3\}$. We have $4\leq m-1\leq\al_s<\al_s+4=s_1<n-1$.  It follows that $\mu\nsim\al_s$ or $\mu\nsim s_1$. Further, since $\al_s\nsim s_1$,  we get $F'\in M(\Delta_3(C_n^2))$.

          If $\mu\os\al_s$, then $F'\prec F$ by \Cref{proposition:precedes}, and we take $\tilde{F}=F'$. So assume that $\al_s\os \mu$. Then  $F'\in\T_s$ and  $F'\ll F$ by \Cref{proposition:order ll}.           
        Since $n-1>s_1=\al_s+4$ and $\mu\neq\al_s+3$, we have $F,F'\notin\cD$. Therefore, $F'\prec F$ by \Cref{def:prec} \ref{def 1}, and we take $\tilde{F}=F'$.
      
        Now, let $\mu\in\{\al_s+2, \al_s+3\}$. 
        Observe that since $\al_s\geq m-1$, we have $\al_s\os \mu$, and hence $F''=\{\al_s\}\sqcup\{\mu,n-1\}$. 
         We have $4\leq m-1\leq\al_s<\al_s+2<\al_s+3\leq n-3<n-1$ (as $\al_s\leq n-6$). Clearly, $\al_s\nsim n-1$. Moreover, if $\mu=\al_s+2$, then $\mu\nsim n-1$; and if $\mu=\al_s+3$, then $\mu\nsim\al_s$. Thus $F''\in M(\D{3}{C_n^2})$.
         Since $\mu<s_1$, we have $F''\ll F$. Further, $\al_s\leq n-6$ implies that $n-1\geq \al_s+5$ and thus, $F,F''\notin\cD$. Hence, $F''\prec F$ by \Cref{def:prec} \ref{def 1}, and we take $\tilde{F}=F''$.

        Thus, in each case, we have a facet $\tilde{F}\in M(\Delta_3(C_n^2))$ such that $\tilde{F}\prec F$ and $\tilde{F}^c=(F^c\setminus\{\nu\})\sqcup\{\mu\}$ for some $\nu\in F^c$. Thus, $F$ is a spanning facet.
         \end{enumerate}
    \end{proof}

\begin{proposition}\label{proposition:mu=x+2}
    Suppose $F\in M(\D{3}{C_n^2})$ be such that either $F^c=\{x, x+1\ (\text{mod $n$}), x+4\ (\text{mod $n$})\}$, or $F^c=\{x, x+3 \ (\text{mod $n$}), x+4 \ (\text{mod $n$})\}$ for some $x\in V(C_n^2)$. Then $F$ is not a spanning facet.
\end{proposition}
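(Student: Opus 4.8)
The plan is to invoke \Cref{remark:F is a spanning facet}, which says that a facet $F$ fails to be a spanning facet exactly when there is some $\mu\in F$ for which \emph{no} facet $\tilde F\in M(\D{3}{W_n})$ satisfies both $\tilde F\prec F$ and $\tilde F^c=(F^c\setminus\{\nu\})\sqcup\{\mu\}$ for some $\nu\in F^c$. So it is enough to produce one ``bad'' vertex $\mu$; in fact I will choose $\mu$ so that no facet of $\D{3}{W_n}$ has complement of the shape $(F^c\setminus\{\nu\})\sqcup\{\mu\}$ at all, which makes the requirement vacuously impossible regardless of the order $\prec$.

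In both cases I take $\mu:=x+2\ (\text{mod }n)$. Since $n\geq 9$, the residues $x,x+1,x+2,x+3,x+4$ are pairwise distinct modulo $n$, so $\mu\notin F^c$, i.e.\ $\mu\in F$. Recall that in $W_n$ two vertices $u,v$ are adjacent iff $u-v\equiv\pm1$ or $\pm2\pmod n$; since $n\geq 9$, a difference equal to $3$ or $4$ modulo $n$ is never an edge. I would then run through the three choices $\nu\in F^c$. For $F^c=\{x,x+1,x+4\}$ the three sets $(F^c\setminus\{\nu\})\sqcup\{\mu\}$ are $\{x+1,x+2,x+4\}$, $\{x,x+2,x+4\}$, and $\{x,x+1,x+2\}$; for $F^c=\{x,x+3,x+4\}$ they are $\{x+2,x+3,x+4\}$, $\{x,x+2,x+4\}$, and $\{x,x+2,x+3\}$. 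In each of these six $3$-element sets, $x+2$ is joined by an edge (difference $1$ or $2$) to one of the remaining vertices, and the remaining two are themselves joined (difference $1$ or $2$), or all three are pairwise adjacent; hence each induced subgraph $W_n[(F^c\setminus\{\nu\})\sqcup\{\mu\}]$ is connected. Consequently none of these sets is the complement of a facet of $\D{3}{W_n}$.

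Putting this together: for the vertex $\mu=x+2$ there is no $\tilde F\in M(\D{3}{W_n})$ with $\tilde F^c=(F^c\setminus\{\nu\})\sqcup\{\mu\}$, so the condition in \Cref{remark:F is a spanning facet} fails, and therefore $F$ is not a spanning facet. The argument is essentially a short case check; the only point that truly requires care is confirming that differences of $3$ and $4$ are non-adjacency differences in $W_n$, which is precisely where the standing hypothesis $n\geq 9$ is used.
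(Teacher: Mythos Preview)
Your proof is correct and follows essentially the same approach as the paper: choose $\mu=x+2\ (\text{mod }n)$ and show that for every $\nu\in F^c$ the induced subgraph $W_n[(F^c\setminus\{\nu\})\sqcup\{\mu\}]$ is connected, so no facet $\tilde F$ with $\tilde F\cap F=F\setminus\{\mu\}$ exists. The paper's proof simply asserts this connectivity without listing the six cases, whereas you spell them out explicitly; the content is the same.
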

\begin{proof}
    We have $x+2$ (mod $n$) $\in F$. Let ${F^\nu}^c=(F^c\setminus\{\nu\})\sqcup\{x+2\text{ (mod $n$)}\}$ for $\nu\in F^c$. Then for any $\nu\in F^c$, $C_n^2[{F^\nu}^c]$ is connected. Hence, there does not exist a facet $\tilde{F}$ such that $\tilde{F} \prec F$ and $\tilde{F}\cap F = F\setminus\{x+2\text{ (mod \ $n$)}\}$.  Therefore,  $F$ is not a spanning facet by definition.
\end{proof}

\begin{remark}\label{remark:not in D}
     Observe that  if $F\in\cD$, then either $F^c=\{x,x+1\ (\text{mod \ $n$}),x+4\ (\text{mod \ $n$})\}$ or $F^c=\{x,x+3\ (\text{mod \ $n$}),x+4\ (\text{mod \ $n$})\}$ for some $x\in V(C_n^2)$. Therefore, if $F\in\cD$, then $F$ is not a spanning facet by \Cref{proposition:mu=x+2}.  
\end{remark}

 \begin{proposition} \label{proposition:preceeding violation}
Let $F, F' \in M(\Delta_3(C_n^2))$. Let  $F \in \T_s$ and $F' \in \T_t$ for some $s\neq t$ such that  ${F}^c = \{\al_s\} \sqcup \{s_1, s_2\}$ and ${F'}^c = \{\mu, s_1, s_2\}$ for some  $\mu \in F$. If $\al_s\os\mu$, then $F\prec F'$. 
\end{proposition}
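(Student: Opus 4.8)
The plan is to analyze how the order $\prec$ restricts to the two facets $F$ and $F'$. Recall $F^c = \{\al_s\} \sqcup \{s_1, s_2\}$ lies in $\T_s$, so by definition of $\T_s$ (via \Cref{remark:Aj}) we have $\al_s \os s_1, s_2$. Since we are told $\al_s \os \mu$ and $\mu \in F$, the vertex $\mu$ is $\os$-smaller than all three elements of ${F'}^c = \{\mu, s_1, s_2\}$; in particular $\mu = \al_t$, the $\os$-least element of ${F'}^c$, so $F'$ genuinely lies in $\T_t$ where $\al_t = \mu$. Now compare $s$ and $t$: because $\mu \os \al_s$ and $\mu = \al_t$, we get $t < s$, so $s \neq t$ is consistent and in fact $s > t$.

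First I would dispose of the $\cD$-membership bookkeeping. If $F' \notin \cD$, then since $F' \in \T_t$, $F \in \T_s$ with $t < s$: if $F \notin \cD$ then $F' \ll F$ (as $t < s$) gives $F' \prec F$ via \Cref{def:prec}\ref{def 1}, contradicting nothing — wait, we want $F \prec F'$. Let me re-examine: we actually want to show $F \prec F'$. Hmm, with $t < s$ and $F' \notin \cD$, clause \ref{def 1} would give $F' \prec F$ if $F \notin \cD$. So the statement as phrased must be using that $F' \in \cD$ or something forces the displacement. Let me reconsider: the point of $\prec$ is that elements of $\cD$ get pushed later. So if $F \notin \cD$ and $F' \in \cD$, clause \ref{def 3} with $s < t$ or $s = t$... but here $t < s$, so clause \ref{def 3} does not apply directly. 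The resolution must be that $F' \in \cD$ forces $F \prec F'$ by clause \ref{def 2} read from $F'$'s side, or that $F \in \cD$.

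So the key structural observation I would extract: the hypothesis $\al_s \os \mu$ together with $\mu \os \al_s$... no, that's contradictory. Let me restart the sign analysis. We have $\al_s \os \mu$ as hypothesis. Then $\mu$ is NOT the $\os$-least element of ${F'}^c$ unless... Actually ${F'}^c = \{\mu, s_1, s_2\}$ and we know $\al_s \os s_1, s_2$ and $\al_s \os \mu$. This tells us nothing about the order among $\mu, s_1, s_2$ themselves. So $F'$ lies in $\T_t$ where $\al_t = \min_\os\{\mu, s_1, s_2\}$, and since $\al_s \os$ all of these, $\al_s \os \al_t$, hence $s < t$. So in fact $s < t$, the mirror image of \Cref{proposition:precedes}. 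Then I would argue exactly as in the proof of \Cref{proposition:precedes}: since $s < t$, if both or neither of $F, F'$ lie in $\cD$ we use clause \ref{def 1} or \ref{def 4} with $F \ll F'$; if $F \in \cD, F' \notin \cD$ we use clause \ref{def 2}; if $F \notin \cD, F' \in \cD$ we use clause \ref{def 3} (which covers $s < t$). In every case $F \prec F'$. The main (minor) obstacle is just being careful that $\al_s \os \mu$ indeed forces $\al_s \os \al_t$ and hence $s < t$, handling the possibility $\mu = s_1$ or $\mu = s_2$ trivially (then $|{F'}^c| < 3$, impossible since $F' \in M(\D{3}{W_n})$), and confirming that no case of \Cref{def:prec} yields $F' \prec F$.
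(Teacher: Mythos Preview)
Your proposal contains a good deal of exploratory back-and-forth, but after your self-correction in the final paragraph the argument is correct: from $\al_s \os s_1, s_2$ (\Cref{remark:Aj}) and the hypothesis $\al_s \os \mu$ you get $\al_s \os \al_t$ (since $\al_t \in \{\mu, s_1, s_2\}$), hence $s < t$, and then each of the four $\cD$-membership cases of \Cref{def:prec} yields $F \prec F'$. This is exactly the paper's approach; the only difference is packaging: the paper observes that $F^c = (F'^c \setminus \{\mu\}) \sqcup \{\al_s\}$ with $\al_s \os \al_t$ and then invokes \Cref{proposition:precedes} directly (with the roles of $F$ and $F'$ reversed), rather than re-running its proof.

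Two small cleanups: your reason for $\mu \neq s_1, s_2$ should simply be that $\mu \in F$ (so $\mu \notin F^c$), not a cardinality argument; and the first two paragraphs of your draft (where you temporarily concluded $t < s$) should be deleted in a final write-up.
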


\begin{proof}
     It follows that $F^c=(F'^c\setminus\{\mu\})\sqcup\{\al_s\}$ and  since $F'\in\T_t$, $\al_t\in\{\mu,s_1,s_2\}$. 
     Then,  $\al_s\os\mu,s_1,s_2$ implies that  $F\prec F'$ by \Cref{proposition:precedes}.
 \end{proof}

  \begin{proposition}\label{proposition:al_s and s_2} 
  Let $F\in M(\D{3}{C_n^2})$ be a spanning facet such that $F\in\T_s$ for some  $s$ and $F^c=\{\al_s\}\sqcup\{s_1,s_2\}$. Then $\al_s\notin\{0,1,2,n-2,n-1\}$ and $s_2=n-1$.
\end{proposition}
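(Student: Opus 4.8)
The plan is to establish the two conclusions in the order: first $s_2 = n-1$, and then, using this, $\al_s \notin \{2, n-2\}$; the remaining exclusions $\al_s \notin \{0, 1, n-1\}$ are already recorded in \Cref{remark:range of al_s}, which gives $2 \le \al_s \le n-2$.

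To prove $s_2 = n-1$, I would argue by contradiction: assume $s_2 \ne n-1$. Since $\al_s \ne n-1$ and $s_1 < s_2 \le n-2$, the vertex $n-1$ then belongs to $F$, so because $F$ is a spanning facet, \Cref{remark:F is a spanning facet} furnishes a facet $\tilde F \in M(\D{3}{W_n})$ with $\tilde F \prec F$ and $\tilde F^c = (F^c \setminus \{\nu\}) \sqcup \{n-1\}$ for some $\nu \in F^c$. A preliminary observation, used throughout, is that $\al_s \os n-1$: by \Cref{remark:order in S} this reduces (when $\al_s < m$) to $n-1 \ge 2m - \al_s$, which holds because $\al_s \ge 2$, and (when $\al_s \ge m$) to $n-1 > \al_s$, which holds because $\al_s \le n-2$. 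Now I run through the three possibilities for $\nu$. If $\nu = \al_s$, then $\tilde F^c = \{n-1, s_1, s_2\}$ does not contain $\al_s$, so $\tilde F \in \T_t$ for some $t \ne s$, and \Cref{proposition:preceeding violation} --- applied with swapped-in vertex $\mu = n-1$ and using $\al_s \os n-1$ --- yields $F \prec \tilde F$, contradicting $\tilde F \prec F$. If $\nu \in \{s_1, s_2\}$, then, using $\al_s \os s_1, s_2$ from \Cref{remark:Aj}, $\al_s$ is still the $\os$-minimum of $\tilde F^c$, so $\tilde F \in \T_s$; writing $\tilde F^c = \{\al_s\} \sqcup \{t_1, t_2\}$ with $t_1 < t_2$ and comparing with $F^c = \{\al_s\} \sqcup \{s_1, s_2\}$, the strict inequality $s_2 < n-1$ gives $F <_{\T_s} \tilde F$, hence $F \ll \tilde F$; since $F \notin \cD$ by \Cref{remark:not in D}, \Cref{def:prec} \ref{def 1} (if $\tilde F \notin \cD$) or \Cref{def:prec} \ref{def 3} (if $\tilde F \in \cD$) forces $F \prec \tilde F$, again a contradiction. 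In every case we get a contradiction, so $n-1 \in F^c$, and since $\al_s \ne n-1$ and $s_1 < s_2$ this means $s_2 = n-1$.

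For $\al_s \notin \{2, n-2\}$, suppose first $\al_s = 2$. By \Cref{remark:Aj}, $2 \os s_1$, and \Cref{remark:order in S} then forces $s_1 \in \{0, 1\}$ if $n$ is odd and $s_1 \in \{0, 1, n-2\}$ if $n$ is even; together with $s_2 = n-1$ this leaves at most three candidates for $F^c$. Examining the neighbourhood $\{0, 1, 3, 4\}$ of the vertex $2$ shows that $W_n[\{0, 2, n-1\}]$ and $W_n[\{1, 2, n-1\}]$ are connected, hence are not facets, so the only surviving candidate is $F^c = \{2, n-2, n-1\}$ (which does occur, for $n$ even). But $\{2, n-2, n-1\} = \{x, x+1\ (\text{mod } n), x+4\ (\text{mod } n)\}$ with $x = n-2$, so by \Cref{proposition:mu=x+2} such an $F$ is not a spanning facet --- a contradiction. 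The case $\al_s = n-2$ is entirely analogous (now using $(n-2) \os s_1$ and the neighbourhood of $n-2$); once more the only facet that survives is $\{2, n-2, n-1\}$, this time for $n$ odd, and it is excluded by \Cref{proposition:mu=x+2}. Therefore $\al_s \notin \{2, n-2\}$, and combining with \Cref{remark:range of al_s} we obtain $\al_s \notin \{0, 1, 2, n-2, n-1\}$.

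The step I expect to be the main obstacle is establishing $s_2 = n-1$: the work is in carefully tracking, for each of the three swaps $\nu \in \{\al_s, s_1, s_2\}$, which block $\T_t$ the modified complement $\tilde F^c$ lands in, and checking that its index never drops below $s$ --- so that, since $F$ is spanning and hence not in $\cD$, \Cref{def:prec} places $\tilde F$ strictly after $F$. Once that is in place, the analysis of $\al_s \in \{2, n-2\}$ is a short finite check.
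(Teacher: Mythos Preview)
Your proposal is correct and follows essentially the same approach as the paper: both arguments use \Cref{remark:range of al_s} for $\al_s\notin\{0,1,n-1\}$, invoke \Cref{proposition:mu=x+2} for $\al_s\notin\{2,n-2\}$, and establish $s_2=n-1$ via the spanning-facet swap together with \Cref{proposition:preceeding violation}, \Cref{remark:not in D}, and \Cref{def:prec}. The only difference is that the paper proves $\al_s\notin\{2,n-2\}$ first (with a terse appeal to \Cref{proposition:mu=x+2}) and $s_2=n-1$ second, whereas you reverse the order and use $s_2=n-1$ to shorten the case check for $\al_s\in\{2,n-2\}$; this reordering is harmless since your verification of $\al_s\os n-1$ only needs $2\le\al_s\le n-2$.
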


\begin{proof}
     By \Cref{remark:range of al_s},   $\al_s\notin\{0,1,n-1\}$. We have $C_n^2[F^c]$ is disconnected and by \Cref{remark:Aj}, $\al_s\os s_1,s_2$. Therefore, since $F$ is a spanning facet, we get $\al_s\notin\{2,n-2\}$ by \Cref{proposition:mu=x+2}. Hence $\al_s\notin\{0,1,2,n-2,n-1\}$.
    
    Observe that $\al_s\os n-1$. 
    Suppose $n-1\notin F ^c$.  Then \Cref{remark:F is a spanning facet} implies that there exists $ \tilde{F} \in M(\Delta_3(C_n^2))$ such that $\tilde{F} \prec F$ and $\tilde{F}^c=(F^c\setminus\{\nu\})\sqcup\{n-1\}$ for some $\nu\in F^c$.
    
    If $\nu=\al_s$, then  by \Cref{proposition:preceeding violation}, $\al_s\os n-1$ implies that $F \prec \tilde{F}$, a contradiction. So,  $\nu\in\{s_1,s_2\}$. Since $\al_s\os s_1,s_2,n-1$ and $s_1<s_2<n-1$,  we have either $\tilde{F} = \{\al_s\}\sqcup\{s_2,n-1\}$ or $\tilde{F} =\{\al_s\}\sqcup\{s_1,n-1\}$. This implies that $F\in\T_s$ and $F \ll \tilde{F}$.  
    Since $F$ is a spanning facet,  $F \notin\cD$ by  \Cref{remark:not in D}.
    Therefore, $F \prec \tilde{F}$ by \Cref{def:prec} \ref{def 1} and \ref{def 3}, again a contradiction. Thus, $n-1\in F ^c$. 

    Clearly, $\al_s\os n-1$ and $s_1<s_2$ implies that $s_2=n-1$.
\end{proof}

\begin{proposition}\label{proposition:F is not a spanning facet}
    Let $F \in M(\Delta_3(C_n^2))$ such that  $F \in \T_s$ and  ${F}^c = \{\al_s\} \sqcup \{s_1, s_2\}$. Suppose there exists  $\mu \in F$ such that $\al_s\os\mu$ and $s_1<\mu$.  Let ${F'}^c = ({F}^c \setminus \{s_2\}) \sqcup \{\mu\}$. If either $F'\notin M(\D{3}{C_n^2})$, or  $F'\in M(\Delta_3(C_n^2))$ such that $F'\in\cD$, then $F$ is not a spanning facet.
\end{proposition}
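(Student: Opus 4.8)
The plan is to show that the given element $\mu\in F$ witnesses the failure of the criterion in \Cref{remark:F is a spanning facet}, i.e.\ that no facet $\tilde F$ with $\tilde F^c=(F^c\setminus\{\nu\})\sqcup\{\mu\}$ for some $\nu\in F^c$ satisfies $\tilde F\prec F$. First I would dispose of the case $F\in\cD$: then $F$ is already not a spanning facet by \Cref{remark:not in D}, so from now on assume $F\notin\cD$. From $\al_s\os\mu$, $s_1<\mu$ and $\mu\in F$ it follows that $\mu$ is distinct from each of $\al_s,s_1,s_2$, so there are exactly three candidates for $\tilde F$, namely the sets $G_1,G_2,G_3$ with complements $G_1^c=\{\mu,s_1,s_2\}$, $G_2^c=\{\al_s,\mu,s_2\}$ and $G_3^c=\{\al_s,s_1,\mu\}={F'}^c$, obtained by deleting $\al_s$, $s_1$ and $s_2$ from $F^c$ respectively. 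It suffices to show that for each $i$, either $G_i\notin M(\D{3}{W_n})$ or $F\prec G_i$.

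For $G_1$: since $\al_s\os\mu,s_1,s_2$ and $\al_s\notin G_1^c$, the $\os$-least element of $G_1^c$ comes $\os$-after $\al_s$, so $G_1\in\T_t$ for some $t>s$. Hence, if $G_1\in M(\D{3}{W_n})$, \Cref{proposition:preceeding violation} (applied with $\mu$ and this $G_1$) gives $F\prec G_1$. For $G_2$: because $\al_s\os\mu,s_2$ and none of $\al_1,\dots,\al_{s-1}$ lies in $G_2^c$, we have $G_2\in\T_s$; moreover $G_2^c$ consists of $\al_s$ together with the pair $\{\mu,s_2\}$, and $s_1<\mu$ together with $s_1<s_2$ forces $s_1$ to be strictly smaller than both elements of that pair, so $F<_{\T_s}G_2$, i.e.\ $F\ll G_2$. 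Since $F\notin\cD$, \Cref{def:prec} \ref{def 1} (if $G_2\notin\cD$) or \Cref{def:prec} \ref{def 3} (if $G_2\in\cD$, both facets lying in $\T_s$) yields $F\prec G_2$. For $G_3=F'$: by hypothesis $F'\notin M(\D{3}{W_n})$, or $F'\in\cD$; in the latter case $\al_s\os s_1,\mu$ forces $F'\in\T_s$, so \Cref{def:prec} \ref{def 3} with $F\notin\cD$ gives $F\prec F'$.

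Since $\prec$ is a total order on $M(\D{3}{W_n})$ (\Cref{def:prec}), in each case the candidate $G_i$, whenever it is a facet at all, satisfies $F\prec G_i$ and therefore does not precede $F$. Hence no admissible $\tilde F$ with $\tilde F\prec F$ exists for this $\mu$, and $F$ is not a spanning facet by \Cref{remark:F is a spanning facet}.

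I do not expect a single hard step: the substance is in organizing the three replacements and observing that the hypothesis on $F'$ is tailored exactly to exclude the deletion of $s_2$, while the deletions of $\al_s$ and $s_1$ are excluded automatically because $\mu$ lies $\os$-after $\al_s$ and numerically after $s_1$. The only points requiring care are checking that $G_1^c$ and $G_2^c$ land in the intended blocks $\T_t$ (so that \Cref{proposition:preceeding violation} and the $<_{\T_s}$-comparison genuinely apply) and tracking $\cD$-membership when invoking \Cref{def:prec}.
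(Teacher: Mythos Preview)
Your argument is correct and follows essentially the same three-case analysis as the paper's proof (replacing $\al_s$, $s_1$, or $s_2$ by $\mu$, and ruling out each candidate via \Cref{proposition:preceeding violation}, the $<_{\T_s}$-comparison, and the hypothesis on $F'$, respectively). One small improvement in your version: the paper first invokes \Cref{proposition:al_s and s_2} to fix $s_2=n-1$ before handling the $\nu=s_1$ case, whereas your direct comparison of $\{s_1,s_2\}$ with $\{\mu,s_2\}$ using only $s_1<\mu$ and $s_1<s_2$ avoids that step entirely.
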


\begin{proof}
    Assume that either $F'\notin M(\D{3}{C_n^2})$, or $F'\in\cD$ whenever $F'\in M(\Delta_3(C_n^2))$.
    Suppose $F$ is a spanning facet.  We have  $\mu \in F$.  By \Cref{remark:F is a spanning facet}, there exists $ \tilde{F} \in M(\Delta_3(C_n^2))$ such that $\tilde{F} \prec F$ and $\tilde{F}^c=(F^c\setminus\{\nu\})\sqcup\{\mu\}$ for some $\nu\in F^c$. Moreover, $F\notin\cD$ by  \Cref{remark:not in D}, and $s_2=n-1$ by \Cref{proposition:al_s and s_2}.  
    
    If $\nu=\al_s$, then $\al_s\os\mu$ implies that $F\prec\tilde{F}$ by \Cref{proposition:preceeding violation}, a contradiction. Now,  if $\nu=s_1$, then $\al_s\os\mu$ implies that  $\tilde{F}^c=\{\al_s\}\sqcup\{\mu,n-1\}$.
    It follows that $\tilde{F}\in\T_s$, and since $s_1<\mu$, $F\ll \tilde{F}$. Hence $F\notin\cD$ implies that $F\prec \tilde{F}$ by \Cref{def:prec} \ref{def 1} and \ref{def 3}, again a contradiction. So, $\nu=s_2=n-1$. Then $\tilde{F}=F'$.

    If $F' \notin M(\Delta_3(C_n^2))$, then this implies that $\tilde{F}\notin M(\Delta_3(C_n^2))$, which is a contradiction.

    If $F'\in M(\D{3}{C_n^2})$, then by our assumption, $F'\in\cD$. Now, since $\al_s\os\mu$ and $s_1<\mu$, we get  $F'^c=\{\al_s\}\sqcup\{s_1,\mu\}$ and thus $F'\in\T_s$. Further, since $F\notin\cD$, $F\prec F'$ by \Cref{def:prec} \ref{def 3}, which is a contradiction. 
    
  Thus $F$ is not a spanning facet. 
\end{proof}

\begin{lemma}\label{lemma:Spanning facet is in S}
    Let $F\in M(\D{3}{C_n^2})$ be a spanning facet. Then $F\in\cS$. \label{lemma: then F in S}
\end{lemma}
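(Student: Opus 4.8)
The plan is to prove the contrapositive: I will show that if $F\in M(\D{3}{W_n})$ and $F\notin\cS$, then $F$ is not a spanning facet, which is equivalent to the asserted implication. The first step is to pin down the shape of $F^c$. By \Cref{proposition:al_s and s_2} together with the standing convention $s_1<s_2$, a spanning facet $F\in\T_s$ has $F^c=\{\al_s\}\sqcup\{s_1,n-1\}$ with $\al_s\in\{3,4,\dots,n-3\}$ and $s_1<n-1$, so $F$ is completely determined by the pair $(\al_s,s_1)$. The three clauses \ref{s1}, \ref{s2}, \ref{s3} defining $\cS$ partition the range $\{3,4,\dots,n-3\}$ of $\al_s$ into $\{3\}$, $\{4,\dots,m-2\}$, and $\{m-1,\dots,n-3\}$, each prescribing an ``allowed'' set of values for $s_1$. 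Hence it suffices to fix $\al_s$ in one of the three sub-ranges, assume $s_1$ lies in the complementary forbidden set, and exhibit a vertex $\mu\in F$ for which there is no facet $\tilde F\prec F$ with $\tilde F^c=(F^c\setminus\{\nu\})\sqcup\{\mu\}$ for some $\nu\in F^c$; by \Cref{remark:F is a spanning facet} this contradicts $F$ being a spanning facet.

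The two mechanisms for producing such a witness $\mu$ are already isolated as \Cref{proposition:mu=x+2} and \Cref{proposition:F is not a spanning facet}. Whenever $F^c$ itself, or the triple $\{\al_s,s_1,\mu\}$ obtained from $F^c$ by replacing $n-1$ with $\mu$, has the shape $\{x,x+1\ (\text{mod}\ n),x+4\ (\text{mod}\ n)\}$ or $\{x,x+3\ (\text{mod}\ n),x+4\ (\text{mod}\ n)\}$ --- the situation for forbidden $s_1$ lying at distance $3$ or $4$ from $\al_s$, together with the wrap-around analogues near $0,1,n-2,n-1$ --- \Cref{proposition:mu=x+2} rules out spanning via $\mu=x+2$; this also disposes of the boundary configuration $F^c=\{\al_s,\al_s+3,\al_s+4\}$ with $\al_s+4=n-1$, where $\al_s$ just misses the interval of \ref{d4} so that $F\notin\cD$, yet the pattern still applies. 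For the remaining forbidden values of $s_1$, I would pick $\mu\in F$ with $\al_s\os\mu$ and $s_1<\mu$ --- locating such a $\mu$ with \Cref{remark:order in S}, typically a small value, or one close to $\al_s$, or just above $2m-\al_s$ --- and check, using \Cref{remark:order in S} and the adjacency rule of $W_n$, that $\{\al_s,s_1,\mu\}$ either induces a connected subgraph of $W_n$ or is one of the four sets in $\cD$ (conditions \ref{d1}--\ref{d4}); in either case \Cref{proposition:F is not a spanning facet} forces $F$ to be non-spanning. Throughout I would invoke $F\notin\cD$, which holds by \Cref{remark:not in D} since $F$ is spanning, as this is precisely what validates the $\prec$-comparisons (via \Cref{def:prec}) used inside those two propositions.

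The bulk of the work, and the main obstacle, is the bookkeeping of the case analysis. For each of the three $\al_s$-ranges one must run through every forbidden value of $s_1$, distinguishing $s_1<\al_s$ from $s_1>\al_s$ and treating separately the positions $s_1$ within distance four of $\al_s$ (where substituting $\mu$ tends to create a path through $\al_s$, or to land in $\cD$) and the wrap-around positions near $0,1,n-2,n-1$. Several entries of the forbidden lists are in fact vacuous: for $\al_s>m$ the extra block governed by $\omega=\min\{2m-\al_s,\al_s-4\}$ in \ref{s3}, and more generally any $s_1$ for which $\al_s\os s_1$ fails, cannot occur since then $F\notin\T_s$ by \Cref{remark:Aj}; and many positions close to $\al_s$ already fail to give a facet of $\D{3}{W_n}$ because the induced subgraph is connected. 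The delicate points requiring individual attention are the endpoints of the ranges ($\al_s\in\{3,4,m-2,m-1,m,n-4,n-3\}$), the two parities of $n$ (which change both the formula for $\cO$ in \Cref{remark:order in S} and the value of $y$ in \ref{s3}), and the smallest admissible $n$, where the forbidden intervals nearly exhaust $V(W_n)$ and a chosen $\mu$ may collide with an element of $F^c$, so that one has to switch from \Cref{proposition:F is not a spanning facet} to \Cref{proposition:mu=x+2}. Once every forbidden value of $s_1$ has been shown to make $F$ non-spanning, we conclude $F\in\cS$; combined with \Cref{lemma: if F in S} this identifies the spanning facets of the shelling $\prec$ as exactly the members of $\cS$, and the homotopy type of $\D{3}{W_n}$ then follows by enumerating $\cS$ to get $|\cS|=\binom{n-4}{2}-9$.
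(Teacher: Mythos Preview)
Your proposal is correct and follows essentially the same approach as the paper's proof: reduce to $F^c=\{\al_s\}\sqcup\{s_1,n-1\}$ via \Cref{proposition:al_s and s_2}, then for each forbidden value of $s_1$ produce a witness $\mu$ and invoke either \Cref{proposition:mu=x+2} or \Cref{proposition:F is not a spanning facet}. The only organizational difference is that the paper splits your third range $\{m-1,\dots,n-3\}$ further into $\{m-1,m\}$, $\{m+1\}$, and $\{m+2,\dots,n-3\}$, since the value of $\omega$ and the particular $\cD$-condition (\ref{d1}, \ref{d3}, or \ref{d4}) used to trigger \Cref{proposition:F is not a spanning facet} differ across these sub-ranges; otherwise the argument is identical.
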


\begin{proof}    
 Let $F\in\T_p$ such that $F^c=\{\al_p\}\sqcup\{p_1,p_2\}$.
   From \Cref{proposition:al_s and s_2}, $\al_p\notin\{0,1,2,n-2,n-1\}$ and $p_2=n-1$. Hence $\al_p\in\{3,4,\ldots,n-3\}$. For each $\al_p$, we find the possible values of $p_1$ for which $F$ is a spanning facet and show that $F\in\cS$ for all such values of $p_1$.  We have $5\leq m\leq n-4$ (as $n\geq 9$).
We consider the following five cases based on the values of $\al_p$.

\begin{enumerate}[label=(\roman*)]
\item $\al_p=3$.

We show that $F$ satisfies \ref{s1}. For this, we need to show that $p_1\in V(C_n^2)\setminus(\{0,1,2,3,4,\ldots,2m-4\}\sqcup\{n-1\})$.
 We have $\al_p=3<m$ (as $m\geq 5$) and $\al_p\os p_1$. By \Cref{remark:order in S} \ref{part:i}, either $p_1<3$ or $p_1\geq 2m-3$. Thus $p_1\notin\{3,4,\ldots,2m-4\}$.

 Suppose $p_1<3$. Then $C_n^2[F^c]$ is disconnected implies that $p_1\in\{0,2\}$. It follows that either $F^c=\{n-1,(n-1)+1\ (\text{mod \ $n$}),(n-1)+4\ (\text{mod \ $n$})\}$ or $F^c=\{n-1,(n-1)+3\ (\text{mod \ $n$}),(n-1)+4\ (\text{mod \ $n$})\}$, which is a contradiction to \Cref{proposition:mu=x+2}.
Hence, $p_1\geq 2m-3$. Since $p_1<p_2=n-1$, $p_1\neq n-1$. This means that $p_1\in V(C_n^2)\setminus(\{0,1,2,3,4,\ldots,2m-4\}\sqcup\{n-1\})$.

   \item $\al_p\in\{4,5,\ldots,m-2\}$.
   
   In this case, we show that  $p_1\in V(C_n^2)\setminus(\{\al_p-4,\al_p-3,\al_p-2\}\sqcup\{\al_p,\al_p+1,\ldots,2m-\al_p-1\}\sqcup\{n-1\})$, which implies that $F$ satisfies \ref{s2}.
   
    Since $\al_p\leq m-2$  and $\al_p\os p_1$, either $p_1<\al_p$ or $p_1\geq 2m-\al_p$ by \Cref{remark:order in S} \ref{part:i}. Thus $p_1\notin\{\al_p,\al_p+1,\ldots,2m-\al_p-1\}$. Also,  $p_1<p_2=n-1$ implies that $p_1\neq n-1$.

    Suppose $p_1\in\{\al_p-4,\al_p-3\}$. We have $p_1+2\leq\al_p-1\leq m-3<n-1=p_2$. Thus $p_1+2\in F$.    
     Clearly, $p_1<p_1+2$. Moreover, observe that $\al_p\os p_1+2$, and $C_n^2[(\di{F^c}{p_2})\sqcup\{p_1+2\}]=C_n^2[\{p_1,p_1+2,\al_p\}]$ is connected. Thus $F'\notin M(\Delta_3(C_n^2))$, where $F'^c=(F^c\setminus\{p_2\})\sqcup\{p_1+2\}$. It follows from \Cref{proposition:F is not a spanning facet} that $F$ is not a spanning facet, a contradiction.  Therefore $p_1\notin\{\al_p-4, \alpha_p-3\}$.
     
    Now, suppose  $p_1=\al_p-2$. Since $C_n^2[F^c]$ is disconnected, $p+1 = \alpha_p-1 \in F$. We have $\al_p\os \alpha_p-1 = p+1$ and $p_1<p_1+1$.  Moreover, $C_n^2[(\di{F^c}{p_2})\sqcup\{p_1+1\}]=C_n^2[\{\al_p-2,\al_p-1,\al_p\}]$ is connected.
    Hence, using \Cref{proposition:F is not a spanning facet}, $F$ is not a spanning facet, a contradiction. Thus, $p_1\neq \al_p-2$.

    Therefore, $p_1\in V(C_n^2)\setminus(\{\al_p-4,\al_p-3,\al_p-2,\al_p,\al_p+1,\ldots,2m-\al_p-1\}\sqcup\{n-1\})$. 

      \item  $\al_p\in\{m-1,m\}$.
      
      Let $$y:=\begin{cases}
     n-1 &\text{ if }\al_p<n-4,\\
     0 &\text{ if  }\al_p=n-4,\\
     1 &\text{ if }\al_p>n-4.
 \end{cases}$$
    We  show that $p_1\in V(C_n^2)\setminus(\{\al_p-4,\al_p-3,\al_p-2,\al_p-1,\al_p,\al_p+1,\al_p+2,\al_p+3\ (\text{mod \ $n$})\}\sqcup\{y\})$,  which implies that $F$ satisfies \ref{s3} (as $\min\{2m-\al_p,\al_p-4\}=\al_p-4$). 
    
    We have $F^c=\{\al_p\}\sqcup\{p_1,p_2\}$, where $p_2=n-1$. Clearly, $p_1\notin\{\al_p,n-1\}$.

     Suppose $p_1=y$. We have $\al_p\leq m\leq n-4$. So $y\neq1$. Since $p_1\neq n-1,$ we get $y=0$.  Clearly, if $\al_p\neq n-4$, then $y\neq0$. Thus $\al_p=n-4$. This means that
    $n=9$ and $\al_p=m$, thereby implying that $F^c=\{m,m+3,m+4\ (\text{mod $n$)}\}$. Then $F$ is not a spanning facet by \Cref{proposition:mu=x+2}.  This is a contradiction. So $p_1\neq y$.

     We now show that if $p_1$ $\in \{\al_p-4,\al_p-3,\al_p-2,\al_p-1,\al_p+1,\al_p+2\}$, then there exists  $\mu \in F$ such that $\alpha_p \os \mu$, $p_1 < \mu$ and $C_n^2[(F^c \setminus \{p_2\}) \sqcup \{\mu\}]$ is connected. 
     Then \Cref{proposition:F is not a spanning facet} implies that $F$ is not a spanning facet, which is a contradiction. 
     
    \begin{enumerate}
    	\item $p_1\in\{\al_p-4,\al_p-3\}$. 
    	
    	We have  $p_1+2\leq \al_p-1\leq m-1<n-1=p_2$. It follows that $p_1+2\in F$.   Take,  $\mu = p_1+2$. Then   $\al_p\os \mu$, $p_1<\mu$ and  $C_n^2[(F^c \setminus \{p_2\}) \sqcup \{\mu\}] = C_n^2[\{p_1, p_1+2, \alpha_p \}] $ is connected. 
    	
    	\item $p_1 = \alpha_p-2$.
    	
    	Since $p_1+1= \al_p-1\leq m-1<n-1=p_2$, we have $p_1+1\in F$. Take $\mu = p_1+1$.  Then   $\al_p\os \mu$, $p_1<\mu$ and  $C_n^2[(F^c \setminus \{p_2\}) \sqcup \{\mu\}]  = C_n^2[\{\alpha_p-2, \alpha_p-1, \alpha_p \}]$ is connected. 
    	
    		\item $p_1 = \alpha_p-1$.
    	
        Here,  $p_1+3=\al_p+2\leq m+2<n-1=p_2$, which implies that $p_1+3\in F$. Take $\mu = p_1+3$. Then   $\al_p\os \mu$,  $p_1<\mu$ and  $C_n^2[(F^c \setminus \{p_2\}) \sqcup \{\mu\}]  = C_n^2[\{\alpha_p-1, \alpha_p, \alpha_p+2 \}]$ is connected. 
    	
    		\item $p_1 = \alpha_p+1$.
    	
    	In this case, $\al_p\os p_1$ implies that $\al_p=m$. Then  $p_1+1=m+2<n-1$. So $p_1+1\in F$.  Take, $\mu = p_1+1$.  We have $\al_p=m\os m+2=p_1+1 = \mu $ and $p_1<\mu$. Clearly, $C_n^2[(F^c\setminus\{p_2\})\sqcup\{\mu\}]  = C_n^2[\{\alpha_p, \alpha_p+1, \alpha_p+2\}]$ is connected.   
    	
    	\item   $p_1=\al_p+2$. 
    	
    	Here, since $C_n^2[F^c]$ is disconnected, we have $n\geq 10$. It follows that $p_1+1=\al_p+3\leq m+3<n-1$, and thus $p_1+1\in F$. Take $\mu = p_1+1$.    
    	Observe that $\al_p\os \mu $ and $C_n^2[(F^c\setminus\{p_2\})\sqcup\{\mu\}] = C_n^2[\{\alpha_p, \alpha_p+2, \alpha_p+3\}]$ is connected.
    	
    	\end{enumerate}

Thus, $p_1 \notin \{\al_p-4,\al_p-3,\al_p-2,\al_p-1,\al_p,\al_p+1,\al_p+2\} \sqcup\{y\}$.

    Now, suppose    $p_1=\al_p+3\ (\text{mod \ $n$})$. 
    	Since $\al_p\leq m$, we have $\al_p+3\ (\text{mod \ $n$})=\al_p+3$. Suppose $n=9$. Then  $m+3=n-1$. Therefore, if $\al_p=m-1$, then $F^c=\{\al_p,\al_p+3,\al_p+4\}$, which implies that $F$ is not a spanning facet by  \Cref{proposition:mu=x+2}. This is a contradiction.  So $\al_p=m$.  Then $p_1=m+3=n-1=p_2$, which is again a contradiction.  
     
        Thus, we assume that $n\geq 10$. It follows that $p_1+1=\al_p+4\leq m+4\leq n-1$. If $p_1+1=n-1$,  then $\al_p=m$ and $F^c=\{m,m+3,m+4\}$. By  \Cref{proposition:mu=x+2}, it follows that $F$ is not a spanning facet, a contradiction. Hence $p_1+1<n-1$ and thus, $p_1+1\in F$. We have $\al_p\os p_1+1$ and $p_1<p_1+1$. Let $F'^c:= (F^c \setminus \{p_2\}) \sqcup \{p_1+1\}$.  If $F'\notin M(\D{3}{C_n^2})$, then  \Cref{proposition:F is not a spanning facet} implies that $F$ is not a spanning facet, which is a contradiction.  So $F'\in M(\D{3}{C_n^2})$. Then  $F'^c=\{\al_p\}\sqcup\{\al_p+3,\al_p+4\}$. This means that $F'$  satisfies \ref{d4}, and thus $F'\in\cD$. Hence,  \Cref{proposition:F is not a spanning facet} implies that $F$ is not a spanning facet, again a contradiction.
       So $p_1\neq\al_p+3\text{ (mod \ $n$})$.     	

 Therefore, $p_1\in V(C_n^2)\setminus(\{\al_p-4,\al_p-3,\al_p-2,\al_p-1,\al_p,\al_p+1,\al_p+2,\al_p+3\ (\text{mod \ $n$})\}\sqcup\{y\})$.

\item $\al_p=m+1$.

Let $$y:=\begin{cases}
    n-1 &\text{ if } m+1<n-4,\\
    0 &\text{ if } m+1=n-4,\\
    1 &\text{ if } m+1>n-4.  
    \end{cases}$$
We show that $F$ satisfies \ref{s3}. For this, we show that $p_1\in V(C_n^2)\setminus(\{m-3,m-2,m-1,m,m+1,m+2,m+3,m+4\ (\text{mod $n$})\}\sqcup\{y\})$ (as $\min\{2m-\al_p,\al_p-4\}=\al_p-4=m-3$). 
We have $F^c=\{m+1\}\sqcup\{p_1,p_2\}$, where $p_2=n-1$.

Suppose  $p_1=y$. Since $p_1\neq p_2= n-1$, we get $y\in\{0,1\}$. If $y=0$, then $m+1=n-4$. Hence $p_2=n-1=m+4$, which implies that $F^c=\{m+1,(m+1)+3\text{ (mod \ $n$}),(m+1)+4\text{ (mod \ $n$})\}$. Thus, $F$ is not a spanning facet by \Cref{proposition:mu=x+2}.
This is a contradiction.  So $y=1$,  and  then $m+1>n-4$. This implies that $C_n^2[F^c]$ is connected, again a contradiction. Hence $p_1\neq y$.

Since $\al_p=m+1\os p_1$, we have $p_1\notin\{m-1,m,m+1\}$.   Suppose $p_1 \in \{ m-3, m-2, m+2, m+3, m+4\ (\text{mod $n$})\}$.

\begin{enumerate}
	\item  $p_1=m-3$.
	
	Here, $p_1+1=m-2\in F$. We have $\al_p=m+1\os m-2=p_1+1$ and $p_1<p_1+1$.  Observe that if $F'^c=\{m+1\}\sqcup\{m-3,m-2\}$, then $F'\in M(\D{3}{C_n^2})$ such that $F'$  satisfies  \ref{d3} and thus, $F'\in\cD$. Therefore,  \Cref{proposition:F is not a spanning facet} implies that $F$ is not a spanning facet, a contradiction. Hence $p_1\neq m-3$.

\item  $p_1=m-2$. 

We have $p_1+4=m+2\in F$. Also, $\al_p=m+1\os m+2=p_1+4$ and $p_1<p_1+4$. If $F'^c=\{m+1\}\sqcup\{m-2,m+2\}$, then $F'\in M(\D{3}{C_n^2})$ such that $F'$ satisfies  \ref{d1}.  Thus $F'\in\cD$. Therefore, by  \Cref{proposition:F is not a spanning facet}, $F$ is not a spanning facet. This is a contradiction. Hence $p_1\neq m-2$.

\item $p_1\in\{m+2,m+3\}$.

 Observe that if $n=9$, then $n-1=m+3$; if $n\in\{10,11\}$, then $n-1=m+4$; and if $n\in\{12,13\}$, then $n-1=m+5$. We have $F^c=\{m+1\}\sqcup\{p_1,n-1\}$. Since $C_n^2[F^c]$ is disconnected and $p_1<n-1$,  it follows that  if $p_1=m+2$, then $n\geq 12$; and if $p_1=m+3$, then $n\geq 14$.  Therefore, in each case, we have $p_1+2<n-1$. This implies that $p_1+2\in F$.  We have $\al_p\os p_1+2$, $p_1<p_1+2$ and    
  $C_n^2[(\di{F^c}{n-1})\sqcup\{p_1+2\}]=C_n^2[\{\al_p,p_1,p_1+2\}]$ is connected. Using   \Cref{proposition:F is not a spanning facet}, we get a contradiction to the fact that $F$ is a spanning facet. Hence  $p_1\notin\{m+2,m+3\}$.

\item  $p_1=m+4\ (\text{mod $n$})$.

Then $F^c=\{m+1,m+4\ (\text{mod $n$}),n-1\}$. Since $C_n^2[F^c]$ is disconnected and $p_1<n-1$, we get $n\geq 12$. Further, if $n\in\{12,13\}$, then $n-1=m+5$, and thus, $F^c=\{m+1,(m+1)+3,(m+1)+4\}$. 
It follows from \Cref{proposition:mu=x+2} that $F$ is not a spanning facet. This is a contradiction. So, assume that $n\geq 14$. Then $p_1=m+4\ (\text{mod $n$})=m+4$ and $p_1+1=m+5<n-1$. Thus $p_1+1\in F$. Let $F'^c:=\{m+1\}\sqcup\{m+4,m+5\}$. Then $F'\in M(\D{3}{C_n^2})$ and satisfies \ref{d4}. Hence $F'\in\cD$.  Using   \Cref{proposition:F is not a spanning facet}, it follows that $F$ is not a spanning facet, a contradiction. So $p_1\neq m+4\ (\text{mod $n$})$ 

	\end{enumerate}

Therefore $p_1\not\in \{m-3,m-2,m-1,m,m+1,m+2,m+3,m+4\ (\text{mod $n$})\}\sqcup\{y\}$.

\item $\al_p\in\{m+2,\ldots,n-3\}$.

We show that $F$ satisfies \ref{s3}. Since $\min\{2m-\al_p,\al_p-4\}=2m-\al_p$, we prove that $p_1\in V(C_n^2)\setminus(\{2m-\al_p,\ldots,\al_p,\al_p+1,\al_p+2,\al_p+3 \text{ (mod \ $n$})\}\sqcup\{y\})$, where $$y=\begin{cases}
    n-1 &\text{ if } \al_p<n-4,\\
    0 &\text{ if } \al_p=n-4,\\
    1 &\text{ if } \al_p>n-4.  
    \end{cases}$$

      We have $\al_p\geq m+2$  and $\al_p\os p_1$. Thus, either $p_1< 2m-\al_p$ or $p_1>\al_p$ by \Cref{remark:order in S} \ref{part:ii}. This means that $p_1\notin\{2m-\al_p,\ldots,\al_p-1,\al_p\}$.  Hence, it is sufficient to show that  $p_1 \not\in \{\al_p+1,\al_p+2,\al_p+3 \text{ (mod \ $n$})\}\sqcup\{y\}$.
     
     \begin{enumerate}
     	\item $\al_p\in\{m+2,m+3,\ldots,n-6\}$.
     	
         Here, since $\al_p\leq n-6<n-4$,  we have $y=n-1$. Clearly, $p_1\neq y$. Suppose $p_1\in\{\al_p+1,\al_p+2\}$. Then $p_1+1\leq \al_p+3\leq n-3<n-1$, and thus, $p_1+1\in F$.  We have $\al_p\os p_1+1$ and $p_1<p_1+1$. Further, since   $C_n^2[(\di{F^c}{p_2})\sqcup\{p_1+1\}]=C_n^2[\{\al_p,p_1,p_1+1\}]$ is connected,  $F$ is not a spanning facet by \Cref{proposition:F is not a spanning facet}. This is a contradiction.  Therefore $p_1\notin\{\al_p+1,\al_p+2\}$.
     	
         Suppose $p_1=\al_p+3\text{ (mod \ $n$})$. We have $\al_p+3\leq n-3<n-1$. So $\al_p+3\ (\text{mod \ $n$})=\al_p+3$. Then $p_1+1=\al_p+4\leq n-2< n-1$, and thus, $p_1+1\in F$.  Clearly, $\al_p\os p_1+1$ and $p_1<p_1+1$.
        If  $F'^c=\{\al_p\}\sqcup\{\al_p+3,\al_p+4\}$, then  $F'\in M(\D{3}{C_n^2})$ and satisfies \ref{d4}. Thus $F'\in\cD$. By \Cref{proposition:F is not a spanning facet}, it follows that $F$ is not a spanning facet, a contradiction. Therefore, $p_1\neq \al_p+3\text{ (mod \ $n$})$.

  \item  $\al_p=n-5$.

   We have $y=n-1$ and $\al_p+3\ (\text{mod \ $n$})=\al_p+3$. Further, $\al_p+4=n-1=p_2$. Therefore, since $C_n^2[F^c]$ is disconnected and $p_1<p_2$, it follows that $p_1\notin\{\al_p+2,y\}$.
   Moreover, if $p_1\in\{\al_p+1,\al_p+3\}$, then either $F^c=\{\al_p,\al_p+1,\al_p+4\}$ or $F^c=\{\al_p,\al_p+3,\al_p+4\}$.  Then, by \Cref{proposition:mu=x+2}, it follows that $F$ is not a spanning facet, a contradiction. Hence $p_1\notin\{\al_p+1,\al_p+3\}$. Therefore, $p_1\not\in \{\al_p+1,\al_p+2,\al_p+3\ (\text{mod \ $n$})\}\sqcup\{y\})$.

  \item $\al_p=n-4$.

   Then $y=0$ and $\al_p+3\ (\text{mod \ $n$})=\al_p+3=n-1=p_2$. If $p_1=y$, then $F^c=\{\al_p,\al_p+3,\al_p+4\ (\text{mod \ $n$})\}$, which contradicts the fact that $F$ is a spanning facet by  \Cref{proposition:mu=x+2}. Hence $p_1\neq y$.  Further, since $C_n^2[F^c]$ is disconnected and $p_1<p_2$, we get $p_1\notin\{\al_p+1,\al_p+2,\al_p+3\}$. 
    Therefore, $p_1\not\in \{\al_p+1,\al_p+2,\al_p+3\ (\text{mod \ $n$})\}\sqcup\{y\})$.

  \item  $\al_p=n-3$.

  We have $y=1$. Since $C_n^2[F^c]$ is disconnected and $p_1<p_2$, it follows that $p_1\notin\{\al_p+1,\al_p+2,\al_p+3\ (\text{mod \ $n$}),1)\}$.  
 Therefore, $p_1\not\in \{\al_p+1,\al_p+2,\al_p+3\ (\text{mod \ $n$})\}\sqcup\{y\})$.

     \end{enumerate}

\end{enumerate}
This completes the proof of  \Cref{lemma:Spanning facet is in S}.
\end{proof}

\begin{proof}[Proof of \Cref{theorem:main}]
 In \Cref{subsection:shelling order}, we have proved that $\prec$ gives a shelling order for $\D{3}{C_n^2}$. Hence the $3$-cut complex $\D{3}{C_n^2}$ is shellable  for  $ n \geq 9$.

Now, using Lemmas \ref{lemma: if F in S} and \ref{lemma: then F in S}, we see that $F\in M(\D{3}{C_n^2})$ is a spanning facet for the shelling order given by $\prec$ on $\D{3}{C_n^2}$ if and only if $F\in\cS$. Therefore, to count the number of spanning facets, it is enough to count the total facets in $\cS$.  Let $F\in\cS$. Then $F\in \T_s$ for some $s$ such that $F^c=\{\al_s\}\sqcup\{s_1,n-1\}$ and $F$ satisfies one of the conditions from \ref{s1} to \ref{s3}.

 If $F$ satisfies \ref{s1}, then $\al_s=3$ and there are $n-(2m-2)=n-2m+2$ possibilities for $s_1$ and hence the same number of spanning facets satisfying \ref{s1}.
 
 If $F$ satisfies \ref{s2}, then there are $m-5$ possibilities for $\al_s$, and for each value of $\al_s$, there are $n-(2m-2\al_s+4)$ possibilities for $s_1$. Therefore, the number of spanning facets satisfying \ref{s2}
 \begin{align*}
  &=(m-5)(n-2m-4)+\left(2\sum_{\al_s=4}^{m-2}\al_s\right)\\
 &=(m-5)(n-2m-4)+(m-2)(m-1)-12\\
    &= mn-m^2+3m-5n+10.
\end{align*}
 
 Suppose $F$ satisfies \ref{s3}. If $\al_s\in\{m-1,m,m+1\}$, then $\omega=\al_s-4$. For each $\al_s$, there are $n-9$ possibilities for $s_1$.  If $\al_s\in\{m+2, \ldots, n-3\}$, then $\omega=2m-\al_s$. There are $n-m-4$ possibilities for $\al_s$, and for each value of $\al_s$, there are $n-(2\al_s-2m+5)$ possibilities for $s_1$. Thus, the number of spanning facets satisfying \ref{s3}
 \begin{align*}
  &= 3(n-9)+(n-m-4)(n+2m-5)-\left(2\sum_{\al_s=m+2}^{n-3}\al_s\right)\\
 &= 3(n-9)+(n-m-4)(n+2m-5)-(n-3)(n-2)+(m+1)(m+2)\\
    &=   mn-m^2-n-11.
\end{align*}
 
 Therefore,  the total  number of spanning facets
\begin{align*}
  &=(n-2m+2)+(mn-m^2+3m-5n+10)+ mn-m^2-n-11\\
    &= 2mn-2m^2+m-5n+1\\
    &= \frac{n^2-9n+2}{2}\\
    &=\binom{n-4}{2}-9.
\end{align*}

 Hence from \Cref{theorem:wedge}, we get  $
 \Delta_3(C_n^2) \simeq \bigvee\limits_{\binom{n-4}{2}-9} \mathbb{S}^{n-4}.
 $
\end{proof}

\section{Conclusion and Future Directions}\label{section:future_directions}

 In \cite{Bayer2024Cutcomplex} and \cite{Bayer2024TotalCutcomplex}, the authors completely determined the homotopy type of the complex $\Delta_k(C_n)$ for all $k$. They also proved that the complex $\Delta_2(C_n^2)$ (if nonvoid) is not shellable. We have proved that the complex $\Delta_3(C_n^2)$ of the squared cycle graphs is shellable for all $n\ge 9$, by constructing a shelling order.  We believe that this shelling order can be generalized to prove that the complex $\Delta_k(C_n^2)$ is shellable for all  $k\geq 4$ and $n\ge k+6$ (\Cref{conjecture}).







We further investigated the $k$-cut complex $\Delta_k(C_n^p)$ for powered cycle graphs $C_n^p$ and, using SageMath, computed their homology groups (with coefficient $\Z$).   
We have data for some small values of $k$, $p$ and $n$, which is illustrated in Tables \ref{tab:2-cut}-\ref{tab:6-cut}. Missing entries in these tables indicate that the corresponding complex $\Delta_k(C_n^p)$ is void. An entry of the form $i: \Z^{\beta}$ denotes that the $i$-th homology group is $\Z^{\beta}$.

\begin{table}[h!]
    \tiny
    \centering
    \begin{tabular}{|c|c|c|c|c|c|c|c|c|c|c|c|}
     \hline
    \diagbox[linewidth=0.2pt, width=.5cm, height=.5cm]{$p$}{$n$}  &8 &9 &10 &11 &12 &13 &14 &15 &16\\
    \hline
      3 &$2: \mathbb{Z}^{}$ &$4: \mathbb{Z}^{2}$& $6: \mathbb{Z}^{}$& $7: \mathbb{Z}^{}$& $8: \mathbb{Z}^{}$& $9: \mathbb{Z}^{}$ & $10: \mathbb{Z}^{}$ & $11: \mathbb{Z}^{}$ & $12: \mathbb{Z}^{}$\\
    \hline
      4 & && $3: \mathbb{Z}^{}$& $5: \mathbb{Z}^{}$& $7: \mathbb{Z}^{3}$& $9: \mathbb{Z}^{}$ & $10: \mathbb{Z}^{}$ & $11: \mathbb{Z}^{}$ & $12: \mathbb{Z}^{}$\\
    \hline
      5 & && & & $4: \mathbb{Z}^{}$& $7: \mathbb{Z}^{}$ & $8: \mathbb{Z}^{}$ & $10: \mathbb{Z}^{4}$ & $12: \mathbb{Z}^{}$\\
    \hline
      6  & && & &  & & $5: \mathbb{Z}^{}$ & $8: \mathbb{Z}^{2}$ & $10: \mathbb{Z}^{}$\\
    \hline
    \end{tabular}
    \caption{ Homology  of $2$-cut complexes $\Delta_{2}(C_n^p)$}
    \label{tab:2-cut}
\end{table}

Note that the nonvoid $2$-cut complexes $\Delta_2(C_n^p)$ in \Cref{tab:2-cut}, have non trivial homology  in  dimension lower than the dimension of $\Delta_2(C_n^p)$. We propose the following conjecture.

\begin{conjecture}\label{conjecture:2-cut}
For $p \geq 3$, if $\Delta_2(C_n^p) \neq \emptyset$, then $\Delta_2(C_n^p)$ is not shellable.
\end{conjecture}

A natural question arises here:  What is the homotopy type of $\Delta_2(C_n^p)$ for $p\geq 3$? 

    

\begin{table}[h!]
    \centering\tiny
    \begin{tabular}{|c|c|c|c|c|c|}
    \hline
    \diagbox[linewidth=0.2pt, width=.85cm, height=.7cm]{$n$}{$p$} &2 &3 &4 &5 & 6 \\
    \hline
        7 &$1:\mathbb{Z}^{}$&&&&\\
    \hline
        8  &$3:\mathbb{Z}^{}$&&& &\\
    \hline
        9  &$5:\mathbb{Z}^{}$&$3:\mathbb{Z}^{}$&&&\\
    \hline
        10  &$6:\mathbb{Z}^{6}$&$4:\mathbb{Z}^{11}$&&&\\
    \hline
        11 &$7:\mathbb{Z}^{12}$&$5:\mathbb{Z}^{}$&$3:\mathbb{Z}^{}$&& \\
    \hline
        12 &$8:\mathbb{Z}^{19}$ &$7:\mathbb{Z}^{2}$ &$6:\mathbb{Z}^{6}$ &&\\
    \hline
        13 &$9:\mathbb{Z}^{27}$ &$9:\mathbb{Z}^{}$&$7:\mathbb{Z}^{40}$ &$5:\mathbb{Z}^{}$&\\
    \hline
        14 &$10:\mathbb{Z}^{36}$ &$10:\mathbb{Z}^{8}$&$8:\mathbb{Z}^{15}$ &$6:\mathbb{Z}^{}$&\\
    \hline
        15  &$11:\mathbb{Z}^{46}$ &$11:\mathbb{Z}^{16}$ &$9:\mathbb{Z}^{}$ &$9:\mathbb{Z}^{21}$ & $5:\mathbb{Z}^{}$\\
    \hline
        16  &$12:\mathbb{Z}^{57}$ &$12:\mathbb{Z}^{25}$ &$11:\mathbb{Z}^{3}$ &$10:\mathbb{Z}^{101}$ & $8:\mathbb{Z}^{31}$\\
    \hline
        17 &$13:\mathbb{Z}^{69}$ &$13:\mathbb{Z}^{35}$ &$13:\mathbb{Z}^{}$ &$11:\mathbb{Z}^{52}$ &$9:\mathbb{Z}^{}$\\
    \hline
        18  &$14:\mathbb{Z}^{82}$ &$14:\mathbb{Z}^{46}$ &$14:\mathbb{Z}^{10}$ &$12:\mathbb{Z}^{19}$ &$12:\mathbb{Z}^{55}$\\
    \hline
        19  &$15:\mathbb{Z}^{96}$ &$15:\mathbb{Z}^{58}$ &$15:\mathbb{Z}^{20}$  &$13:\mathbb{Z}^{}$ &$13:\mathbb{Z}^{210}$\\
    \hline
        20  &$16:\mathbb{Z}^{111}$ &$16:\mathbb{Z}^{71}$ &$16:\mathbb{Z}^{31}$  &$15:\mathbb{Z}^{4}$ &$14:\mathbb{Z}^{126}$\\
    \hline
        21  &$17:\mathbb{Z}^{127}$ &$17:\mathbb{Z}^{85}$ &$17:\mathbb{Z}^{43}$  &$17:\mathbb{Z}^{}$ &$15:\mathbb{Z}^{64}$\\
    \hline
        22  &$18:\mathbb{Z}^{144}$ &$18:\mathbb{Z}^{100}$ &$18:\mathbb{Z}^{56}$  &$18:\mathbb{Z}^{12}$ &$16:\mathbb{Z}^{23}$\\
    \hline
        23  &$19:\mathbb{Z}^{162}$ &$19:\mathbb{Z}^{116}$ &$19:\mathbb{Z}^{70}$  &$19:\mathbb{Z}^{24}$ &$17:\mathbb{Z}^{}$\\
    \hline
        24  &$20:\mathbb{Z}^{181}$ &$20:\mathbb{Z}^{133}$ &$20:\mathbb{Z}^{85}$  &$20:\mathbb{Z}^{37}$ &$19:\mathbb{Z}^{5}$\\
    \hline
        25  &$21:\mathbb{Z}^{201}$ &$21:\mathbb{Z}^{151}$ &$21:\mathbb{Z}^{101}$  &$21:\mathbb{Z}^{51}$ &$21:\mathbb{Z}^{}$\\
    \hline
    \end{tabular}
    \caption{Homology of $3$-cut complexes $\Delta_{3}(C_n^p)$}
    \label{tab:3-cut}
\end{table}

        
   

\begin{table}[h!]
    \centering\tiny
    \begin{tabular}{|c|c|c|c|c|c|}
    \hline
    \diagbox[linewidth=0.2pt, width=.85cm, height=.7cm]{$n$}{$p$} &2 &3 &4 &5 & 6 \\
    \hline
         8&$1: \mathbb{Z}^{}$ &&&& \\
         \hline
         
          \multirowcell{2}{9} &  $3: \mathbb{Z}^{}$ &&&&\\
        & $4: \mathbb{Z}^{3}$ &&&&\\
    \hline
         
          \multirowcell{2}{10} &\multirowcell{2}{$5: \mathbb{Z}^{16}$}  &$2: \mathbb{Z}^{}$&&&\\
        & & $3: \mathbb{Z}^{2}$&&&\\
    \hline
    
         11&$6: \mathbb{Z}^{43}$ & $4: \mathbb{Z}^{21}$&&& \\
         \hline
         
          \multirowcell{3}{12} &\multirowcell{3}{$7: \mathbb{Z}^{81}$}  & $5: \mathbb{Z}^{4}$&\multirowcell{2}{$3: \mathbb{Z}^{}$}&&\\
        & &$6: \mathbb{Z}^{2}$&\multirowcell{2}{$4: \mathbb{Z}^{3}$}&&\\
        && $7: \mathbb{Z}^{4}$&&&\\
    \hline
         
          \multirowcell{2}{13} &\multirowcell{2}{$8: \mathbb{Z}^{129}$}  &$7: \mathbb{Z}^{14}$ & $5: \mathbb{Z}^{27}$&&\\
        & &$8: \mathbb{Z}^{13}$& $6: \mathbb{Z}^{13}$&&\\
    \hline
         
          \multirowcell{2}{14} &\multirowcell{2}{$9: \mathbb{Z}^{188}$}  &$8: \mathbb{Z}^{15}$&\multirowcell{2}{$7: \mathbb{Z}^{99}$}&$4: \mathbb{Z}^{}$&\\
        & &$9: \mathbb{Z}^{35}$&&$5: \mathbb{Z}^2$&\\
    \hline
         
          \multirowcell{3}{15} &\multirowcell{3}{$10: \mathbb{Z}^{259}$}  & \multirowcell{2}{$9: \mathbb{Z}^{15}$}&\multirowcell{2}{$8: \mathbb{Z}^{77}$}&$6: \mathbb{Z}^{5}$&\\
        & &\multirowcell{2}{$10: \mathbb{Z}^{82}$}&\multirowcell{2}{$10: \mathbb{Z}^{5}$}&$7: \mathbb{Z}^{31}$&\\
        & &&&$8: \mathbb{Z}^{3}$&\\
    \hline
         
          \multirowcell{3}{16} &\multirowcell{3}{$11: \mathbb{Z}^{343}$}  &\multirowcell{2}{$10: \mathbb{Z}^{16}$}&$9: \mathbb{Z}^{11}$&\multirowcell{2}{$8: \mathbb{Z}^{181}$}&\multirowcell{2}{$5: \mathbb{Z}^{2}$}\\
        & &\multirowcell{2}{$11: \mathbb{Z}^{151}$}&$10: \mathbb{Z}^{8}$ &\multirowcell{2}{$9: \mathbb{Z}^{48}$} &\multirowcell{2}{$6: \mathbb{Z}^{}$}\\
        & && $11: \mathbb{Z}^{16}$&&\\
    \hline
         
          \multirowcell{2}{17} &\multirowcell{2}{$12: \mathbb{Z}^{441}$}  &$11: \mathbb{Z}^{17}$& $11: \mathbb{Z}^{52}$& $9: \mathbb{Z}^{18}$&$7: \mathbb{Z}^{}$\\
        & &$12: \mathbb{Z}^{237}$ &$12: \mathbb{Z}^{34}$& $10: \mathbb{Z}^{272}$&$8: \mathbb{Z}^{51}$\\
    \hline
         
          \multirowcell{3}{18} &\multirowcell{3}{$13:\mathbb{Z}^{554}$} & \multirowcell{2}{$12:\mathbb{Z}^{18}$}&\multirowcell{2}{$12:\mathbb{Z}^{73}$}&
        \multirowcell{2}{$11:\mathbb{Z}^{323}$} & $9:\mathbb{Z}^{16}$\\
        & &\multirowcell{2}{$13:\mathbb{Z}^{338}$}&\multirowcell{2}{$13:\mathbb{Z}^{69}$}& \multirowcell{2}{$13:\mathbb{Z}^{6}$}&$10:\mathbb{Z}^{159}$\\
        & &&&&$11:\mathbb{Z}^{18}$\\
    \hline
         
          \multirowcell{2}{19} &\multirowcell{2}{$14:\mathbb{Z}^{683}$}  &$13:\mathbb{Z}^{19}$&$13:\mathbb{Z}^{77}$& $12:\mathbb{Z}^{189}$ &$11:\mathbb{Z}^{723}$\\
        & &$14:\mathbb{Z}^{455}$ &$14:\mathbb{Z}^{133}$& $14:\mathbb{Z}^{19}$ &$12:\mathbb{Z}^{133}$\\
    \hline
    
          \multirowcell{3}{20} &\multirowcell{3}{$15:\mathbb{Z}^{829}$}  &\multirowcell{2}{$14:\mathbb{Z}^{20}$}& \multirowcell{2}{$14:\mathbb{Z}^{80}$} &$13:\mathbb{Z}^{28}$&\multirowcell{2}{$12:\mathbb{Z}^{186}$}\\
        & &\multirowcell{2}{$15:\mathbb{Z}^{589}$}& \multirowcell{2}{$15:\mathbb{Z}^{233}$} &$14:\mathbb{Z}^{20}$&\multirowcell{2}{$13:\mathbb{Z}^{604}$}\\
        & &&&$15:\mathbb{Z}^{40}$&\\
    \hline
    \end{tabular}
    \caption{Homology of $4$-cut complexes $\Delta_{4}(C_n^p)$}
    \label{tab:4-cut}
\end{table}

We have proved that for $n\geq 9=4\cdot2+1$, the $3$-cut complex $\Delta_3(C_n^2)$ is shellable and is homotopy equivalent to ${\binom{n-4}{2}-9}= \frac{n^2-8n-n+2}{2}$ wedge of spheres of dimension equal to the dimension of $\Delta_3(C_n^2)$. 
It is easy to check that for $p\geq 3$, if  $n \leq 2p+2$, then $\Delta_3(C_n^p)=\emptyset$.
Based on our calculation given in \Cref{tab:3-cut}, we make the following conjecture for $n \geq 2p+3$.
    
	\begin{conjecture}\label{conjecture:3-cut powered cycle}
		Let $p\geq 3$. Then $\Delta_3(C_n^p)$ satisfies the following.
        \begin{enumerate}[label=(\roman*)]
        \item For $2p+3\leq n \leq 4p$, $\Delta_3(C_n^p)$ is not shellable. Moreover,
        $$\Delta_3(C_{n}^p)\simeq\begin{cases}
                                    \mathbb{S}^{p-1},&\text{ if $n=2p+3$ and $p$ is even},\\
                    \mathbb{S}^{p},&\text{ if $n=2p+3$ and $p$ is odd,}\\
                    \bigvee\limits_{\alpha}\mathbb{S}^{n-8},&\text{ if $2p+4\leq n\leq 3p-1$,}\\
                    \bigvee\limits_{\beta}\mathbb{S}^{n-6},&\text{ if $3p\leq n\leq 4p-1$,}\\
                    \bigvee\limits_{p-1}\mathbb{S}^{n-5},&\text{ if $ n = 4p$,}
                                 \end{cases}$$
where $\alpha$ and $\beta$ depend on $n$ and $p$.

        \item\label{3-cut shellability} For $n\geq 4p+1$, $\Delta_3(C_n^p)$ is shellable and $$\Delta_3(C_n^p) \simeq \bigvee\limits_{\frac{n^2-4np-n+2}{2}} \mathbb{S}^{n-4}.$$ 
        \end{enumerate} 
		
	\end{conjecture}

\begin{table}[h!]
    \tiny
    \centering
    \begin{tabular}{|c|c|c|c|c|c|c|c|c|c|c|c|}
     \hline
    \diagbox[linewidth=0.2pt, width=.5cm, height=.5cm]{$p$}{$n$} &9 &10 &11 &12 &13 &14 &15 &16 &17 &18 &19\\
    \hline
    
      \multirowcell{2}{2} & \multirowcell{2}{$1: \mathbb{Z}^{}$} & $3: \mathbb{Z}^{}$& \multirowcell{2}{$5: \mathbb{Z}^{56}$}& \multirowcell{2}{$6: \mathbb{Z}^{152}$}& \multirowcell{2}{$7: \mathbb{Z}^{300}$}&\multirowcell{2}{$8: \mathbb{Z}^{505}$} & \multirowcell{2}{$9: \mathbb{Z}^{776}$} & \multirowcell{2}{$10: \mathbb{Z}^{1125}$} &\multirowcell{2}{$11: \mathbb{Z}^{1565}$} &\multirowcell{2}{$12: \mathbb{Z}^{2110}$} &\multirowcell{2}{$13: \mathbb{Z}^{2775}$}\\
      
       & & $4: \mathbb{Z}^{10}$&&&&&& &&&\\
    \hline
    
      \multirowcell{3}{3} & &&\multirowcell{3}{$3: \mathbb{Z}^{}$}& \multirowcell{3}{$4: \mathbb{Z}^{39}$}& $5: \mathbb{Z}^{15}$& \multirowcell{2}{$7: \mathbb{Z}^{41}$}& \multirowcell{2}{$8: \mathbb{Z}^{62}$} & \multirowcell{2}{$9: \mathbb{Z}^{64}$} &\multirowcell{2}{$10: \mathbb{Z}^{68}$}&\multirowcell{2}{$11: \mathbb{Z}^{72}$} &\multirowcell{2}{$12: \mathbb{Z}^{76}$}\\
      
       & &&& &$6: \mathbb{Z}^{}$&\multirowcell{2}{$8: \mathbb{Z}^{56}$}&\multirowcell{2}{$9: \mathbb{Z}^{165}$}&\multirowcell{2}{$10: \mathbb{Z}^{381}$} &\multirowcell{2}{$11: \mathbb{Z}^{732}$} &\multirowcell{2}{$12: \mathbb{Z}^{1213}$}&\multirowcell{2}{$13: \mathbb{Z}^{1825}$}\\
       
       & &&& &$7: \mathbb{Z}^{13}$&&& & &&\\
    \hline
    
      \multirowcell{3}{4} & &&&&\multirowcell{2}{$3: \mathbb{Z}^{2}$}& \multirowcell{3}{$5: \mathbb{Z}^{29}$}& \multirowcell{3}{$7: \mathbb{Z}^{228}$}& \multirowcell{3}{$8: \mathbb{Z}^{331}$}&$9: \mathbb{Z}^{155}$&$10: \mathbb{Z}^{23}$&$11: \mathbb{Z}^{19}$\\
      
       & &&&& \multirowcell{2}{$4: \mathbb{Z}^{}$}&&& &$10: \mathbb{Z}^{}$&$11: \mathbb{Z}^{163}$&$12: \mathbb{Z}^{360}$\\
       
       & &&& &&&& &$11: \mathbb{Z}^{68}$ &$12: \mathbb{Z}^{180}$&$13: \mathbb{Z}^{399}$\\
    \hline
    
      \multirowcell{2}{5} & &&&&&&\multirowcell{2}{$5: \mathbb{Z}^{}$}& $6: \mathbb{Z}^{68}$& \multirowcell{2}{$8: \mathbb{Z}^{271}$} &$9: \mathbb{Z}^{56}$& $11: \mathbb{Z}^{1426}$\\
      
       & &&& &&&&$7: \mathbb{Z}^{}$ & &$10: \mathbb{Z}^{606}$&$13: \mathbb{Z}^{19}$\\
    \hline
    
      \multirowcell{2}{6} & &&&&&&& &$5: \mathbb{Z}^{2}$ &$7: \mathbb{Z}^{8}$&{$9: \mathbb{Z}^{705}$}\\
      
       & &&& &&&& &6$: \mathbb{Z}^{}$ &$8: \mathbb{Z}^{78}$&{$10: \mathbb{Z}^{}$}\\
    \hline
    \end{tabular}
    \caption{Homology of $5$-cut complexes $\Delta_{5}(C_n^p)$}
    \label{tab:5-cut}
\end{table}

\begin{table}[h!]
    \tiny
    \centering
    \begin{tabular}{|c|c|c|c|c|c|c|c|c|c|c|}
     \hline
    \diagbox[linewidth=0.2pt, width=.5cm, height=.5cm]{$p$}{$n$}  &10 &11 &12 &13 &14 &15 &16 &17 &18 &19 \\
    \hline
    
      \multirowcell{2}{2} &\multirowcell{2}{$1: \mathbb{Z}^{}$} & $3: \mathbb{Z}^{}$& \multirowcell{2}{$5: \mathbb{Z}^{136}$}& \multirowcell{2}{$6: \mathbb{Z}^{402}$}& \multirowcell{2}{$7: \mathbb{Z}^{855}$} & \multirowcell{2}{$8: \mathbb{Z}^{1537}$} & \multirowcell{2}{$9: \mathbb{Z}^{2507}$} & \multirowcell{2}{$10: \mathbb{Z}^{3841}$} & \multirowcell{2}{$11: \mathbb{Z}^{5630}$} & \multirowcell{2}{$12: \mathbb{Z}^{7979}$}\\
      
       & & $4: \mathbb{Z}^{}$ &&&&&&&&\\
    \hline
      \multirowcell{3}{3} & &&\multirowcell{2}{$2: \mathbb{Z}^{}$}& \multirowcell{3}{$4: \mathbb{Z}^{64}$}& $5: \mathbb{Z}^{44}$& $6: \mathbb{Z}^{3}$& \multirowcell{2}{$8: \mathbb{Z}^{177}$} & \multirowcell{2}{$9: \mathbb{Z}^{205}$} & \multirowcell{2}{$10: \mathbb{Z}^{216}$} &\multirowcell{2}{ $11: \mathbb{Z}^{228}$}\\
      
       & & &\multirowcell{2}{$3: \mathbb{Z}^{2}$}&&$6: \mathbb{Z}^{}$& $7: \mathbb{Z}^{91}$& \multirowcell{2}{$9:\Z^{500}$} & \multirowcell{2}{$10: \mathbb{Z}^{1258}$}& \multirowcell{2}{$11: \mathbb{Z}^{2603}$}&\multirowcell{2}{$12:\Z^{4635}$}\\
      
       & & &&&$7: \mathbb{Z}^{28}$& $8: \mathbb{Z}^{150}$&&&&\\
    \hline
    
      \multirowcell{3}{4} & &&&&\multirowcell{2}{$3: \mathbb{Z}^{2}$}& \multirowcell{2}{$5: \mathbb{Z}^{58}$}& \multirowcell{2}{$6: \mathbb{Z}^{4}$}& \multirowcell{2}{$8: \mathbb{Z}^{917}$}& \multirowcell{2}{$9: \mathbb{Z}^{772}$}  & $10: \mathbb{Z}^{246}$ \\
      
       & & &&&\multirowcell{2}{$4: \mathbb{Z}^{}$}& \multirowcell{2}{$6: \mathbb{Z}^{5}$} & \multirowcell{2}{$7: \mathbb{Z}^{409}$}& \multirowcell{2}{$10: \mathbb{Z}^{34}$} & \multirowcell{2}{$11: \mathbb{Z}^{180}$} & $11: \mathbb{Z}^{342}$\\

       &&&&&&&&&&$12: \mathbb{Z}^{570}$\\
    \hline
    
      \multirowcell{3}{5} & &&&&&&\multirowcell{2}{$4: \mathbb{Z}^{4}$}& \multirowcell{3}{$6: \mathbb{Z}^{101}$}& $7: \mathbb{Z}^{2}$ & \multirowcell{2}{$9: \mathbb{Z}^{476}$ }\\
      
       & & &&&&&\multirowcell{2}{$5: \mathbb{Z}^{}$}&&$8: \mathbb{Z}^{451}$&\multirowcell{2}{$10: \mathbb{Z}^{1159}$ }\\

       & & &&&&&&&$9: \mathbb{Z}^{9}$&\\
    \hline
      \multirowcell{2}{6} & &&&&&& & & $5: \mathbb{Z}^{2}$ & $7: \mathbb{Z}^{135}$ \\
      
       & & &&&&&&&$6: \mathbb{Z}^{}$&$8: \mathbb{Z}^{20}$\\
    \hline
    \end{tabular}
    \caption{Homology of $6$-cut complexes $\Delta_{6}(C_n^p)$}
    \label{tab:6-cut}
\end{table}

 Examining Tables \ref{tab:4-cut}-\ref{tab:6-cut} for $k=4,5,6$, we observe that for $p\geq 3$, the complexes $\Delta_k(C_n^p)$  have non trivial homology in dimensions lower than the  dimension of $\Delta_k(C_n^p)$. These findings raise the question of whether the complexes $\Delta_k(C_n^p)$, if nonvoid, are not shellable for $k\geq 4$ and $p\geq 3$.

    \Cref{conjecture} focuses on the shellability of the $k$-cut complex $\Delta_k(C_n^2)$ of the squared cycle graphs for $k\geq 3$. In \Cref{theorem:main}, we have proved that for $n\geq 9=2\cdot3+3$, $\Delta_3(C_n^2)$ is homotopy equivalent to wedge of ${\binom{n-4}{2}-9}= \binom{n-1}{3-1}-(2^{3-1}-1)n$ spheres of dimension $n-4$. Using the data in Tables \ref{tab:4-cut}-\ref{tab:6-cut}, we extend this discussion with the following conjecture about the homotopy type of $\Delta_k(C_n^2)$ for $k\geq 4$.
    
    \begin{conjecture}\label{conjecture:k-cut squared cycle}  For $k\geq4$ and $n\geq2k+3$, 
    $$
		\Delta_k(C_n^2) \simeq \bigvee\limits_{{n-1\choose k-1}-(2^{k-1}-1)n} \mathbb{S}^{n-k-1}.
		$$
	\end{conjecture}

\section*{Acknowledgement}
The first author is supported by HTRA fellowship by IIT Mandi, India. The second author is supported by the seed grant project IITM/SG/SMS/95 by IIT Mandi, India. The third author is partially supported by the DISHA fellowship by NISER Bhubaneswar, India.

\bibliographystyle{abbrv}
\bibliography{ref}

\addcontentsline{toc}{section}{References}

\end{document}